\theoremstyle{plain} 
\newtheorem{theorem}{\indent\sc Theorem}[section]
\newtheorem{lemma}[theorem]{\indent\sc Lemma}
\newtheorem{corollary}[theorem]{\indent\sc Corollary}
\newtheorem{proposition}[theorem]{\indent\sc Proposition}
\theoremstyle{definition} 
\newtheorem{definition}[theorem]{\indent\sc Definition}
\newtheorem{remark}[theorem]{\indent\sc Remark}
\newtheorem{example}[theorem]{\indent\sc Example}
\newcommand{\C}{\mathbb{C}}
\newcommand{\R}{\mathbb{R}}
\newcommand{\Q}{\mathbb{Q}}
\newcommand{\Z}{\mathbb{Z}}
\newcommand{\N}{\mathbb{N}}
\def\2{I\hspace{-.1em}I}
\title{Hermite's approach to Abelian integrals revisited} 
\author{\textsc{Makoto Kawashima}}
\date{ }   
\begin{document}

\maketitle

\rightline{\textit{Dedicated to the Memory of Professor Marc Huttner}}
 




\begin{abstract} 
In this article, we establish a new linear independence criterion for the values of certain {\it Lauricella hypergeometric series} $F_D$ with rational parameters, in both the complex and 
$p$-adic settings, over an algebraic number field. This result generalizes a theorem of C.~Hermite \cite{Hermite} on the linear independence of certain Abelian integrals.
Our proof relies on explicit Pad\'{e}-type approximations to solutions of a reducible Jordan-Pochhammer differential equation, which extends the Pad\'{e} approximations for certain Abelian integrals in \cite{Hermite}.
The main novelty of our approach lies in the proof of the non-vanishing of the determinants associated with these Pad\'{e}-type approximants.
\end{abstract}

\textit{Key words and phrases}: {Lauricella hypergeometric series, Jordan-Pochhammer differential equation, Pad\'{e} approximation, Rodrigues formula, $G$-functions, linear independence.} 

\section{Introduction}
In this paper we extend Hermite's construction of Pad\'{e} approximants for Abelian integrals to a broad class of Lauricella hypergeometric series, and derive an explicit criterion (with an effective measure) for the linear independence of their values over algebraic number fields - simultaneously in the complex and 
$p$-adic contexts.

Pad\'{e} approximation, originating in the works of C.~Hermite in his study of the transcendence of~$e$~\cite{Hermite e} and H.~Pad\'{e}~\cite{Pade1, Pade2}, has long played a central role in Diophantine approximation and transcendental number theory.  
In arithmetic applications, one typically constructs explicit systems of Pad\'{e} approximations to certain functions, often by linear algebraic arguments combined with bounds derived from Siegel's lemma via Dirichlet's box principle.  
However, these general constructions are not always sufficient for arithmetic purposes, such as establishing linear independence results.  
In such cases, it becomes essential to construct explicit Pad\'{e} approximations providing analytic estimates sharp enough for the intended application--a task that can usually be carried out only for special classes of functions.

\smallskip

Over the complex field, Hermite~\cite{H} established a criterion for the $\mathbb{Q}$-linear independence of the values of certain Abelian integrals associated with the Jordan-Pochhammer differential equation, by explicitly constructing Pad\'{e} approximations of such integrals.  
In his work, all parameters~$s_i$ determining the exponents of the Jordan-Pochhammer equation (see equation~\eqref{second}) are assumed to be of the form~$1/k$ for positive integers~$k$.  
Following Hermite's approach, G.~Rhin and P.~Toffin~\cite{R-T} proved a linear independence criterion for distinct logarithmic values over imaginary quadratic fields by constructing explicit Pad\'{e}-type approximants of logarithmic functions, corresponding to the case~$s_i = 0$.  
Later, M.~Huttner~\cite{H} refined Hermite's method and, through a detailed analysis of Hermite's approximants, obtained sharper measures of linear independence than those of Hermite's original results (see also \cite{H0}).  
In~\cite{Kaw}, the present author established a linear independence criterion for the values of Gauss' hypergeometric functions with varying parameters, both in the complex and in the~$p$-adic settings, for a special case where all~$s_i = -1/m$ with~$m$ the number of parameters.  
Related Diophantine properties of the values of solutions to Jordan-Pochhammer equations were investigated by G.~V.~and D.~V.~Chudnovsky in~\cite[Section~2]{ch brothers II}.

\smallskip

In the present work, we extend Hermite's criterion to the case where the parameters $s_i$ are arbitrary rational numbers whose sum does not lie in $\mathbb{Z}_{<-1}$.  
This leads to a new linear independence criterion for the corresponding Abelian integrals over algebraic number fields, valid simultaneously in the complex and $p$-adic settings.  
Our approach relies on a detailed study of Pad\'{e}-type approximants (beyond the classical Pad\'{e} approximants) and their algebraic and analytic structures, following the framework developed in~\cite{Kaw}.

To investigate the arithmetic nature of the values of holonomic Laurent series, it is crucial to construct their Pad\'{e}-type approximants and to elucidate their analytic and algebraic properties.  
In~\cite{Kaw}, explicit constructions were given for holonomic series on which a first-order differential operator with polynomial coefficients acts so that the resulting function becomes a polynomial.  
In this paper, we establish more general sufficient conditions ensuring the non-vanishing of determinants associated with such Pad\'{e}-type approximants.  
This analysis is motivated by the goal of extending the method to broader classes of holonomic Laurent series and by the desire to understand the arithmetic implications of non-vanishing--a property that plays a central role in Siegel's method~\cite{Siegel1}.

A key technical ingredient is the introduction of the {\it formal $f$-integration map} $\varphi_f$ (see equation \eqref{varphi f}), which enables the explicit--yet purely formal--construction of Pad\'{e}-type approximants and the proof of their principal properties.  
This concept appears in various forms in earlier works of S.~David, N.~Hirata-Kohno, and the author~\cite{DHK2,DHK3,DHK4,DHK5}, as well as in~\cite{KP,KP2} by A.~Po\"{e}ls and the author.

To apply Siegel's method~\cite{Siegel1} in proving our main theorem, it is essential to establish the non-vanishing of the determinant formed by these Pad\'{e}-type approximants.  
In previous works such as~\cite{DHK2,DHK3,DHK4,KP,Kaw}, this step required explicit computation of the determinant, which is often a intricate and technically demanding task.  
Here, we develop a new approach based on the study of the kernel of the formal $f$-integration map, as introduced in~\cite{KP2} and~\cite{DHK5}.  
This method allows us to prove the non-vanishing property in a simpler and more conceptual way, without computing the explicit determinant values.  
We emphasize that the non-vanishing condition is governed entirely by the first-order differential operators involved--specifically, by the coefficients of operator (see Theorem~$\ref{nonvanish Delta}$).

\subsection{Main result} \label{subsection theorem}
Our main result concerns a linear independence criterion for the values of the Lauricella hypergeometric series over an algebraic number field, in both the complex and $p$-adic settings.
We begin by recalling the definition of the $m$-variable Lauricella hypergeometric series with parameters $\alpha, \beta_1,\ldots,\beta_m, \gamma$, defined by
\[
F^{(m)}_{D} \!\left(\alpha,\beta_1,\ldots,\beta_m,\gamma; z_1,\ldots,z_m\right)
=\sum_{k_1,\ldots,k_m=0}^{\infty}
\dfrac{(\alpha)_{k_1+\cdots+k_m}(\beta_1)_{k_1}\cdots(\beta_m)_{k_m}}
{(\gamma)_{k_1+\cdots+k_m}\,k_1!\cdots k_m!}\,
z_1^{k_1}\cdots z_m^{k_m},
\]
where $(a)_k$ denotes the Pochhammer symbol, given by $(a)_0=1$ and $(a)_k=a(a+1)\cdots(a+k-1)$ for $k\ge 1$.

\medskip

To state our main result, we first fix notation.
Let $K$ be an algebraic number field and denote by $\mathfrak{M}_K$ the set of places of $K$, and for $v\in \mathfrak{M}_K$, let $K_v$ denote the completion of $K$ at $v$. 
We normalize the absolute value $|\cdot|_v$ by
\[
|p|_v=p^{-\tfrac{[K_v:\Q_p]}{[K:\Q]}} \quad \text{if } v\mid p,
\qquad
|x|_v=|\iota_v(x)|^{\tfrac{[K_v:\R]}{[K:\Q]}} \quad \text{if } v\mid \infty,
\]
where $p$ is a rational prime and $\iota_v:K\hookrightarrow \C$ the embedding associated to $v$. 
On $K_v^n$, the norm $|\cdot|_v$ is taken to be the supremum norm. 

For $\boldsymbol{\beta}=(\beta_1,\ldots,\beta_m)\in K^m$, we define the logarithmic $v$-adic height and the global logarithmic height by
\begin{align*}
{\mathrm{h}}_v(\boldsymbol{\beta})&=\log \max\{1, |\beta_1|_v,\ldots,|\beta_m|_v\}, \\
{\mathrm{h}}(\boldsymbol{\beta})&=\sum_{v\in \mathfrak{M}_K}{\mathrm{h}}_v(\boldsymbol{\beta}).
\end{align*}
We also define the denominator of $\boldsymbol{\beta}$ as
\[
{\rm{den}}(\boldsymbol{\beta})=\min\{n\in \Z\mid n>0 \ \text{such that} \ n\beta_i \ \text{are algebraic integers for all } 1\le i \le m\}.
\]

For a rational number $\alpha$, we put
\[
\mu(\alpha)={\rm{den}}(\alpha)\prod_{\substack{q:\,\text{prime} \\ q\mid {\rm{den}}(\alpha)}} q^{\tfrac{1}{q-1}}.
\]

\medskip

Now we are ready to state our main result. 
Let $m\ge 2$ be a fixed positive integer. Fix an algebraic number field $K$. 
Consider polynomials $a(z), b(z)\in K[z]$, where $a(z)$ is assumed to be monic of degree $m$ and $b(z)$ satisfies $\deg b\le m-1$. 
We denote the derivative of $a(z)$ by $a'(z)$, and by $b_{m-1}$ the coefficient of $z^{m-1}$ in $b(z)$. Note that $b_{m-1}$ can be $0$.
Assume that $a(z)$ decomposes over $K$, and let $\alpha_1,\ldots,\alpha_m\in K$ denote its roots, counted with multiplicity.
We impose the following conditions:
\begin{align}
&\alpha_1,\ldots,\alpha_{m} \ \text{are pairwise distinct}, \label{first} \\  
&s_i:=\dfrac{b(\alpha_i)}{a'(\alpha_i)}\in \Q \setminus \Z_{\le -1} 
\quad \text{for all } 1\le i \le m, \label{second} \\ 
& b_{m-1}\notin \Z_{< -1}. \label{third}
\end{align}
For $v\in \mathfrak{M}_K$ and $\beta\in K\setminus\{0\}$, we introduce the quantity
\begin{align} \label{V}
V_v(\beta)=& \, m\,{\rm{h}}_{v}(\beta)-(m-1){\rm{h}}(\beta)-\sum_{i=1}^m{\rm{h}}(\alpha_i) -m\left(
{\rm{h}}(\boldsymbol{\alpha})
+\sum_{i=1}^m\log \mu(s_i)+\log 4\right)\\
&-(m-1)\dfrac{{\rm{den}}(b_{m-1})}{\varphi({\rm{den}}(b_{m-1}))}\!
\sum_{\substack{1\le j \le {\rm{den}}(b_{m-1}) \\ (j,{\rm{den}}(b_{m-1}))=1}}
\dfrac{1}{j}, \nonumber
\end{align}
where $\boldsymbol{\alpha}=(\alpha_1,\ldots,\alpha_m)\in K^m$ and $\varphi$ denotes Euler's totient function.

\medskip

\begin{theorem} \label{main}
Retain the above notation and assumptions \eqref{first}, \eqref{second} and \eqref{third}.
For $0\le j \le m-2$, we define formal Laurent series $f_j(z)$ by
\[
\prod_{i=1}^m\left(1-\frac{\alpha_i}{z}\right)^{s_i}\cdot 
\frac{1}{z^{j+1}}\cdot 
F^{(m)}_D\!\left(
b_{m-1}+j+1,\,
1+s_1,\ldots,1+s_m;\,
b_{m-1}+j+2;\,
\frac{\alpha_1}{z},\ldots, \frac{\alpha_m}{z}
\right).
\]
Let $v_0\in \mathfrak{M}_K$. We assume $V_{v_0}(\beta)>0${\footnote{The positivity condition $V_{v_0}(\beta)>0$ roughly means that $\beta$ is arithmetically large enough at $v_0$ compared with the heights of the parameters. This guarantees both the convergence of the involved Laurent series at $z=\beta$ and the arithmetic control required in the application of Siegel's method.
}}. 
Then each series $f_j(z)$ converges at $z=\beta$ in $K_{v_0}$ and the $m$ elements of $K_{v_0}$$:$
$$1,f_0(\beta),f_1(\beta),\ldots,f_{m-2}(\beta),$$
are linearly independent over $K$.
\end{theorem}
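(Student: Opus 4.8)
The plan is to run Siegel's method through explicit Pad\'e-type approximation, exactly along the lines sketched in the introduction. Set $f(z)=\prod_{i=1}^m\bigl(1-\alpha_i/z\bigr)^{s_i}$; by \eqref{first} the $\alpha_i$ are distinct, so $f$ is a well-defined Laurent series in $1/z$ annihilated by a reducible first-order (Jordan--Pochhammer type) operator whose coefficients are built from $a(z)$ and $b(z)$, with $s_i=b(\alpha_i)/a'(\alpha_i)$ and $b_{m-1}$ encoding the local exponent data. A short manipulation with the formal $f$-integration map $\varphi_f$ of \eqref{varphi f} shows that, up to the common analytic factor $f(z)$, each $f_j(z)$ is the $f$-integral of the monomial $z^{-j-1}$; thus $1,f_0,\dots,f_{m-2}$ all lie in $\varphi_f$ applied to an explicit finite-dimensional space of rational functions, which is the purely algebraic setup from \cite{Kaw,KP2,DHK5} underpinning the whole construction.

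Next I would construct, for each integer $n\ge 1$, a Rodrigues-type polynomial --- essentially a normalized $n$-th derivative of $a(z)^n$ together with a correction coming from $f$ --- and apply $\varphi_f$ to it, producing a common numerator polynomial $P_n$ of degree $O(n)$ and polynomials $Q_{0,n},\dots,Q_{m-2,n}$ such that each remainder $R_{j,n}(z):=P_n(z)f_j(z)-Q_{j,n}(z)$ is a Laurent series in $1/z$ vanishing to order $\asymp n$ at $z=\infty$. From the explicit shape of $P_n,Q_{j,n},R_{j,n}$ I would extract the three estimates required by Siegel's method: (i) an analytic upper bound for $|R_{j,n}(\beta)|_{v_0}$ decaying like $C_1^{\,n}\,|\beta|_{v_0}^{-mn}$ up to lower-order factors; (ii) archimedean and non-archimedean upper bounds for the sizes of $P_n(\beta)$ and $Q_{j,n}(\beta)$ in terms of ${\rm h}(\boldsymbol{\alpha})$, the ${\rm h}(\alpha_i)$ and ${\rm h}(\beta)$; and (iii) a common denominator for the coefficients dividing a product of shape $\bigl(\prod_i\mu(s_i)\bigr)^n4^n$ times the ${\rm den}(b_{m-1})$-factor in \eqref{V} --- the $\mu(s_i)$ and the totient term appearing precisely when one clears denominators in the Pochhammer symbols $(1+s_i)_k$ and in the $b_{m-1}$-shift. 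Combining (i)--(iii), the net exponential rate is exactly $V_{v_0}(\beta)$, so the hypothesis $V_{v_0}(\beta)>0$ is the statement that the approximations are sharp enough; it also forces $|\alpha_i/\beta|_{v_0}<1$, giving the claimed convergence of each $f_j$ at $z=\beta$.

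The decisive point, and the step I expect to be the main obstacle, is to show that the $m\times m$ determinant $\Delta_n$ built from the numerators and denominators of the linear forms $P_nf_j-Q_{j,n}$ together with their shifts in $n$ is nonzero for infinitely many $n$. Rather than evaluating $\Delta_n$ explicitly --- the delicate hand computation carried out in \cite{DHK2,DHK3,DHK4,KP,Kaw} --- I would invoke Theorem \ref{nonvanish Delta}, which reduces non-vanishing to a property of the kernel of $\varphi_f$: a vanishing $\Delta_n$ would force the Rodrigues polynomials at consecutive levels to satisfy a nontrivial relation modulo $\ker\varphi_f$, and that kernel is governed entirely by the leading coefficient of the first-order operator. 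Conditions \eqref{second} and \eqref{third}, namely $s_i\notin\Z_{\le-1}$ and $b_{m-1}\notin\Z_{<-1}$, are exactly what excludes such a degeneracy: these are the integer values at which the relevant Pochhammer factors, respectively the $b_{m-1}$-shift, would vanish and the construction would collapse.

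Finally I would close with the standard Siegel endgame. Suppose $\lambda_0+\sum_{j=0}^{m-2}\lambda_jf_j(\beta)=0$ for some $\lambda_i\in K$ not all zero. Using a level $n$ with $\Delta_n\ne0$, the approximation identities let me solve for a nonzero element $\xi_n\in K$ that is, up to controlled algebraic factors, a combination of the $R_{j,n}(\beta)$; its $v_0$-adic absolute value is then bounded by (i), while all its remaining absolute values are bounded by (ii) and (iii). Applying the product formula $\prod_v|\xi_n|_v=1$ and letting $n\to\infty$ contradicts $V_{v_0}(\beta)>0$, and this contradiction establishes the linear independence of $1,f_0(\beta),\dots,f_{m-2}(\beta)$ over $K$.
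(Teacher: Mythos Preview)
Your outline is essentially the paper's own proof: build Pad\'e-type approximants of $(f_j)_j$ via the Rodrigues operator attached to $L=-a(z)\tfrac{d}{dz}+b(z)$, establish the estimates at all places, prove the non-vanishing of the approximation determinant via Theorem~\ref{nonvanish Delta}, and feed everything into Siegel's criterion (the paper quotes \cite[Proposition~5.6]{DHK2} rather than invoking the product formula by hand, but the content is the same).

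Two places where your description does not quite match what is actually needed. First, the $m$ columns of the determinant do \emph{not} come from ``shifts in $n$'': they come from varying an auxiliary exponent $\ell\in\{0,\dots,m-1\}$ in $P_{n,\ell}=R_n\cdot z^{\ell}$, all at the \emph{same} $n$. This is precisely the matrix to which Theorem~\ref{nonvanish Delta} applies, and that theorem gives $\Delta_n\neq 0$ for every $n$ (not just infinitely many) once \eqref{second} and \eqref{third} are translated into the hypotheses \eqref{key assump2} and \eqref{important assump}. Second, the remainder $\mathfrak{R}_{n,j,\ell}(\beta)$ decays like $|\beta|_{v_0}^{-n}$, not $|\beta|_{v_0}^{-mn}$; the factor $m$ multiplying ${\rm h}_{v_0}(\beta)$ in $V_{v_0}(\beta)$ comes from the fact that $\deg P_{n,\ell}=(m-1)n+\ell$, so the height of the linear forms grows like $(m-1)n\,{\rm h}(\beta)$ at the other places, and $m=1+(m-1)$.
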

We will give a proof of Theorem~$\ref{main}$ together with a linear independence measure (see Theorem~\ref{main+measure}).
\begin{remark} \label{fj are G functions}
We denote $L$ by the differential operator of order $1$ with polynomial coefficients $-a(z)\tfrac{d}{dz}+b(z)$.
A result due to S.~Fischler and T.~Rivoal \cite[Proposition~$3$ (ii)]{F-R} shows that $L$ is a $G$-operator (see the definition of $G$-operator \cite[IV]{An1}) under the assumptions \eqref{first} and \eqref{second}.
We observe that the Laurent series $f_j$ satisfies $L\cdot f_j\in K[z]$ with degree $m-j-2$ (see Lemma~\ref{formal solutions1}).
In particular, $f_j$ is a solution of a reducible Jordan-Pochhammer equation $\left(\tfrac{d}{dz}\right)^{m-1}L$ (see Example $\ref{reduce JP}$).
Combining these results, by a well-known theorem of Y.~Andr\'{e}, G.~V. \& D.~V.~Chudnovsky and N.~Katz (refer \cite[Th\'{e}or\`{e}me $3.5$]{An2}), we conclude $f_j$ is a $G$-function in the sense of C.~F.~Siegel \cite{Siegel1}.
\end{remark}

\medskip

\noindent\textbf{Outline of the article.} Section~\ref{Pade} is based on the results given in~\cite{Kaw}.  
We begin by introducing the Pad\'{e}-type approximants of Laurent series.  
In Subsection~\ref{formal map}, we introduce the {\it formal $f$-integration map} associated with a Laurent series $f$, which plays a central role throughout this paper, and we describe its fundamental properties in case of $f$ is holonomic.  
Subsection~\ref{Pade subsection} provides an overview of Pad\'{e}-type approximants and Pad\'{e}-type approximation for Laurent series that become polynomials under the action of a first-order differential operator with polynomial coefficients.
Section~\ref{Pade linear independence} formulates, in terms of the coefficients of the differential operator, sufficient conditions ensuring the non-vanishing of the determinant formed by the Pad\'{e}-type approximants constructed in Section~\ref{Pade} (see Theorem~\ref{nonvanish Delta}).  
This part constitutes the main novel contribution of the present work.
In Section~\ref{Estimate}, we treat the case where the differential operator considered in Section~\ref{Pade linear independence} is a $G$-operator.  
We give explicit expressions for the corresponding Laurent series and establish estimates for their Pad\'{e}-type approximants and Pad\'{e} approximations with respect to both Archimedean and non-Archimedean valuations.
Section~\ref{proof} is devoted to the proof of our main theorem, together with a quantitative measure of linear independence.  
Finally, Section~\ref{Appendix} serves as an appendix, summarizing some basic facts on the Jordan-Pochhammer equation.

\section{Pad\'{e}-type approximants of Laurent series} \label{Pade}
Throughout this section, we fix a field $K$ of characteristic $0$.
{{We denote the formal power series ring in the variable $1/z$ with coefficients $K$ by $K[[1/z]]$ and the field of fractions by $K((1/z))$. We say that an element of $K((1/z))$ is a formal Laurent series.}}
We define the order function at $z=\infty$ by
$${\rm{ord}}_{\infty}:K((1/z)) \longrightarrow \Z\cup \{\infty\}; \sum_{k} \dfrac{a_k}{z^k} \mapsto \min\{k\in\Z\cup \{\infty\} \mid a_k\neq 0\}.$$
{{Note that, for $f\in K((1/z))$, ${\rm{ord}}_{\infty} \, f=\infty$ if and only if $f=0$.}}
We recall without proof the following elementary fact:
\begin{lemma} \label{pade}
Let $m$ be a nonnegative integer, $f_1(z),\ldots,f_m(z)\in (1/z)\cdot K[[1/z]]$ and $\boldsymbol{n}=(n_1,\ldots,n_m)\in \N^{m}$.
Put $N=\sum_{j=1}^mn_j$.
For a nonnegative integer $M$ with $M\ge N$, there exist polynomials $(P,Q_{1},\ldots,Q_m)\in K[z]^{m+1}\setminus\{\boldsymbol{0}\}$ satisfying the following conditions:

\medskip

$(i)$ \ ${\rm{deg}}P\le M$,

\medskip

$(ii)$ \ ${\rm{ord}}_{\infty} \left(P(z)f_j(z)-Q_j(z)\right)\ge n_j+1 \ \ \text{for} \ \ 1\le j \le m$.
\end{lemma}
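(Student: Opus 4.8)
The plan is to reduce the existence of the tuple $(P,Q_1,\ldots,Q_m)$ to a counting argument on a homogeneous linear system, in the spirit of the classical construction of Pad\'{e} approximants.

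First I would treat $P$ as the only genuine unknown: write $P(z)=\sum_{i=0}^{M}p_iz^i$ with indeterminate coefficients $p_0,\ldots,p_M\in K$, so that there are $M+1$ degrees of freedom, and condition $(i)$ is built in. For each $j$, since $f_j\in(1/z)\cdot K[[1/z]]$, the product $P(z)f_j(z)$ lies in $K((1/z))$ and decomposes uniquely as $P(z)f_j(z)=Q_j(z)+R_j(z)$, where $Q_j(z)\in K[z]$ collects the terms of nonnegative degree in $z$ (equivalently, the terms $a_k/z^{k}$ with $k\le 0$) and $R_j(z)\in(1/z)\cdot K[[1/z]]$. Each coefficient of $Q_j$ is an explicit $K$-linear form in $p_0,\ldots,p_M$; in particular $Q_j$ is determined once $P$ is chosen, so it contributes no further unknowns.

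Next I would rewrite condition $(ii)$. Since $P(z)f_j(z)-Q_j(z)=R_j(z)$ already has ${\rm ord}_{\infty}\ge 1$, the requirement ${\rm ord}_{\infty}\bigl(P(z)f_j(z)-Q_j(z)\bigr)\ge n_j+1$ is equivalent to the vanishing of the coefficients of $1/z,\,1/z^{2},\ldots,1/z^{n_j}$ in the expansion at $z=\infty$ of $P(z)f_j(z)$. Each such coefficient is a $K$-linear form in $p_0,\ldots,p_M$ (a finite convolution of the $p_i$ with the Taylor coefficients of $f_j$ in $1/z$), so condition $(ii)$ for the index $j$ amounts to $n_j$ homogeneous linear equations. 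Summing over $j$ yields a homogeneous linear system of $N=\sum_{j=1}^{m}n_j$ equations in the $M+1$ unknowns $p_0,\ldots,p_M$.

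Finally, since $M\ge N$ we have $M+1>N$, so the system has strictly more unknowns than equations and hence admits a nonzero solution $(p_0,\ldots,p_M)$. Taking $P$ to be the corresponding polynomial and defining $Q_1,\ldots,Q_m$ as above, the fact that $P\neq 0$ forces $(P,Q_1,\ldots,Q_m)\in K[z]^{m+1}\setminus\{\boldsymbol{0}\}$, while $\deg P\le M$ and ${\rm ord}_{\infty}(Pf_j-Q_j)\ge n_j+1$ hold by construction. There is no serious obstacle here: the only points requiring care are the bookkeeping of orders at $z=\infty$ (so that the relevant linear conditions are correctly identified as exactly $n_j$ in number) and the remark that, since the $Q_j$ are slaved to $P$, nontriviality of $P$ alone already guarantees nontriviality of the whole tuple.
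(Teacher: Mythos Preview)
Your argument is correct and is exactly the standard linear-algebra construction of Pad\'{e}-type approximants. The paper itself states this lemma \emph{without proof}, introducing it as an ``elementary fact'', so there is no proof in the paper to compare against; your write-up simply supplies the omitted details.
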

\begin{definition}
We say that a vector of polynomials $(P,Q_{1},\ldots,Q_m) \in K[z]^{m+1}$ satisfying properties $(i)$ and $(ii)$ is a weight $\boldsymbol{n}$, degree $M$ Pad\'{e}-type approximant of  $(f_1,\ldots,f_m)$.
For such approximants $(P,Q_{1},\ldots,Q_m)$ of $(f_1,\ldots,f_m)$, we call the formal Laurent series $(P(z)f_j(z)-Q_{j}(z))_{1\le j \le m}$, that is to say remainders, as weight $\boldsymbol{n}$ degree $M$ Pad\'{e}-type approximations of $(f_1,\ldots,f_m)$.
\end{definition}

\subsection{Formal $f$-integration map} \label{formal map}
Let $f(z)=\sum_{k=0}^{\infty} f_k/z^{k+1}\in (1/z)\cdot K[[1/z]]$.
We define a {{$K$-linear map}} $\varphi_f\in {\rm{Hom}}_K(K[t],K)$ by
\begin{align} \label{varphi f}
\varphi_f:K[t]\longrightarrow K; \ \ \ t^k\mapsto f_k \ \ \ (k\ge0) .
\end{align}

The above linear map extends naturally to a $K[z]$-{{linear map}} $\varphi_f: K[z,t]\rightarrow K[z]$, and then to a $K[z][[1/z]]$-{{linear map}} $\varphi_f: K[z,t][[1/z]]\rightarrow K[z][[1/z]]$. With this notation, the formal Laurent series $f(z)$ satisfies the following crucial identities {{(see \cite[$(6.2)$ page 60 and $(5.7)$ page 52]{N-S})}}:
\begin{align*}
&f(z)=\varphi_f \left(\dfrac{1}{z-t}\right),\ \ \ P(z)f(z)-\varphi_f\left(\dfrac{P(z)-P(t)}{z-t}\right)\in (1/z)\cdot K[[1/z]] \ \ \text{for any} \ \ P(z)\in K[z].
\end{align*}
We recall a criterion, based on the morphism $\varphi_f$, for determining when the given polynomials form Pad\'{e} approximants.

\begin{lemma}[{\it confer} {\rm{\cite[Lemma~2.3]{Kaw}}}] \label{equivalence Pade}
Let $m, M$ be positive integers, $f_1(z),\ldots,f_m(z)\in (1/z)\cdot K[[1/z]]$.
Let $\boldsymbol{n}=(n_1,\ldots,n_m)\in \N^m$ with $\sum_{j=1}^m n_j \le M$. 
Let $P(z)\in K[z]$ be a non-zero polynomial with  $\deg P\le M$, and put $Q_j(z)=\varphi_{f_j}\left((P(z)-P(t))/(z-t)\right)\in K[z]$ for $1\le j \le m$. The following assertions are equivalent.
   
\medskip
      
$(i)$ The vector of polynomials $(P,Q_1,\ldots,Q_m)$ is a weight $\boldsymbol{n}$ Pad\'{e}-type approximant of $(f_1,\ldots,f_m)$.

\medskip

$(ii)$ We have $t^kP(t)\in {\rm{ker}}\,\varphi_{f_j}$ for any pair of integers $(j,k)$ with $1\le j \le m$ and $0\le k \le n_j-1$.
   
\end{lemma}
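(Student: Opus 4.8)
The plan is to compute the remainders $R_j(z):=P(z)f_j(z)-Q_j(z)$ explicitly in terms of the map $\varphi_{f_j}$, and then to read off condition $(ii)$ of the Pad\'{e}-type approximant definition directly from the $1/z$-expansion of $R_j$.

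First I would invoke the two identities recalled immediately before the statement. Since $f_j(z)=\varphi_{f_j}\!\left(1/(z-t)\right)$ and $\varphi_{f_j}$ is $K[z][[1/z]]$-linear, we have $P(z)f_j(z)=\varphi_{f_j}\!\left(P(z)/(z-t)\right)$. Subtracting $Q_j(z)=\varphi_{f_j}\!\left((P(z)-P(t))/(z-t)\right)$ and using linearity once more gives
\[
R_j(z)=\varphi_{f_j}\!\left(\frac{P(z)-\bigl(P(z)-P(t)\bigr)}{z-t}\right)=\varphi_{f_j}\!\left(\frac{P(t)}{z-t}\right).
\]
Expanding $1/(z-t)=\sum_{k\ge 0}t^k/z^{k+1}$ in $K[t][[1/z]]$, multiplying by $P(t)$, and applying $\varphi_{f_j}$ coefficient-wise---which is legitimate because $P(t)/(z-t)\in K[z,t][[1/z]]$, the domain of the extended map---yields
\[
R_j(z)=\sum_{k\ge 0}\frac{\varphi_{f_j}\bigl(t^kP(t)\bigr)}{z^{k+1}}.
\]

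With this formula the equivalence is immediate. Condition $(ii)$ of the Pad\'{e}-type approximant definition for the index $j$ asserts ${\rm{ord}}_{\infty}\,R_j\ge n_j+1$; by the displayed expansion this holds if and only if the coefficients of $z^{-1},\ldots,z^{-n_j}$ all vanish, that is, $\varphi_{f_j}(t^kP(t))=0$ for $0\le k\le n_j-1$, which is exactly the condition $t^kP(t)\in{\rm{ker}}\,\varphi_{f_j}$ of assertion $(ii)$ of the lemma. Running this over all $j$ gives $(i)\Leftrightarrow(ii)$. It remains only to observe that condition $(i)$ of the definition, namely $\deg P\le M$, is among the hypotheses, and that $\deg Q_j\le\deg P-1$ holds automatically since $(P(z)-P(t))/(z-t)$ has $z$-degree at most $\deg P-1$; so no separate verification of the degree bounds is needed.

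The argument is essentially bookkeeping once the two formal identities for $\varphi_f$ are in hand, and I do not expect a genuine obstacle. The single delicate point is the term-by-term application of $\varphi_{f_j}$ to the infinite series $P(t)/(z-t)$, and this is already justified by the extension of $\varphi_f$ to a $K[z][[1/z]]$-linear map on $K[z,t][[1/z]]$ recalled in the text.
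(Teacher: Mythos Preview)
Your proof is correct. The paper does not give its own proof of this lemma---it simply cites \cite[Lemma~2.3]{Kaw}---but your argument is precisely the computation the paper carries out later in the proof of Theorem~\ref{Pade fj}~$(ii)$, where it writes $\mathfrak{R}_{n,j,\ell}(z)=\varphi_j\!\left(P_{n,\ell}(t)/(z-t)\right)$ and expands via $1/(z-t)=\sum_{k\ge0}t^k/z^{k+1}$; so your approach is entirely in line with the paper's methods.
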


\subsection{Holonomic series and the kernel of $f$-integration map}
Lemma~$\ref{equivalence Pade}$ implies that the study of the kernel of the formal $f$-integration map is essential for constructing Pad\'{e} approximants of Laurent series. 
This aspect has been studied in \cite{Kaw} for holonomic Laurent series. 
In this subsection we recall a result from \cite[Corollary~$2.6$]{Kaw}.

\medskip

We denote the action of a differential operator $L$ on a function $f$, such as a polynomial or a Laurent series, by $L\cdot f$.
Consider the map 
\begin{align}
\iota :K(z)[\tfrac{d}{dz}]\longrightarrow K(t)[\tfrac{d}{dt}]; \ \ \sum_j P_j(z)\tfrac{d^j}{dz^j} \mapsto \sum_j (-1)^j \tfrac{d^j}{dt^j} P_j(t). \label{iota diff}
\end{align}
{{Note that for $L\in K(z)[\tfrac{d}{dz}]$, $\iota(L)$ is called the {\emph {formal adjoint}} of $L$ and relates to the dual of differential module $K(z)[\tfrac{d}{dz}]/K(z)[\tfrac{d}{dz}]L$ (see \cite[III Exercises $3)$]{An1}).}}
For $L\in K(z)[\tfrac{d}{dz}]$, we denote $\iota(L)$ by $L^{*}$. 
Notice that we have $(L_1L_2)^{*}=L^*_2L^{*}_1$ for any $L_1,L_2\in K(z)[\tfrac{d}{dz}]$.  

\begin{proposition} $(${\rm{\cite[Corollary~$2.6$]{Kaw}}}$)$ \label{inc} 
Let $f(z)\in (1/z)\cdot K[[1/z]]$ and $L\in K[z,\tfrac{d}{dz}]$. Then the following are equivalent:

\medskip

$(i)$ $L\cdot f(z)\in K[z]$.

\medskip

$(ii)$ $L^{*}\cdot K[t]\subseteq {\rm{ker}}\,\varphi_f$.
\end{proposition}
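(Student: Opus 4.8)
The plan is to pin down explicitly the principal part at $z=\infty$ of the Laurent series $L\cdot f(z)$. Concretely, I would establish that, in the decomposition $K((1/z))=K[z]\oplus (1/z)\cdot K[[1/z]]$,
\[
L\cdot f(z)\equiv \sum_{k\ge 0}\varphi_f\!\left(L^{*}\cdot t^{k}\right)\dfrac{1}{z^{k+1}}\pmod{K[z]},
\]
the right-hand side being a well-defined element of $(1/z)\cdot K[[1/z]]$ since each $L^{*}\cdot t^{k}$ is a polynomial in $t$. (For a multiplication operator $L=P(z)$ this identity is already contained in the second of the two crucial identities recalled above.) Granting the displayed identity, the equivalence of $(i)$ and $(ii)$ is immediate: $L\cdot f\in K[z]$ holds iff its $(1/z)\cdot K[[1/z]]$-component vanishes, iff $\varphi_f(L^{*}\cdot t^{k})=0$ for every $k\ge 0$, and --- since $\{t^{k}\}_{k\ge 0}$ is a $K$-basis of $K[t]$ and both $L^{*}$ and $\varphi_f$ are $K$-linear --- this is precisely $L^{*}\cdot K[t]\subseteq {\rm{ker}}\,\varphi_f$.

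To prove the displayed identity I would start from $f(z)=\varphi_f\!\left(\tfrac{1}{z-t}\right)$ with $\tfrac{1}{z-t}=\sum_{k\ge 0}t^{k}/z^{k+1}$, and use that the extended map $\varphi_f$ affects only the variable $t$, hence commutes both with multiplication by elements of $K[z]$ and with $\tfrac{d}{dz}$. Writing $L=\sum_i P_i(z)\tfrac{d^{i}}{dz^{i}}$ and letting $L_z$ denote $L$ viewed as a differential operator in $z$, this gives $L\cdot f(z)=\varphi_f\!\left(L_z\cdot\tfrac{1}{z-t}\right)$. The decisive elementary observation is $\tfrac{d}{dz}\tfrac{1}{z-t}=-\tfrac{d}{dt}\tfrac{1}{z-t}$; combining it with the partial-fraction splitting $\tfrac{P_i(t)}{z-t}=\tfrac{P_i(z)}{z-t}+\tfrac{P_i(t)-P_i(z)}{z-t}$, whose last summand lies in $K[z,t]$, yields the Lagrange-type identity
\[
L_z\cdot\frac{1}{z-t}-L^{*}_t\cdot\frac{1}{z-t}\in K[z,t],
\]
where $L^{*}_t=\iota(L)$ acts on $t$. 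Applying $\varphi_f$ annihilates this polynomial difference because $\varphi_f(K[z,t])\subseteq K[z]$, while
\[
\varphi_f\!\left(L^{*}_t\cdot\frac{1}{z-t}\right)=\varphi_f\!\left(\sum_{k\ge 0}\left(L^{*}\cdot t^{k}\right)\frac{1}{z^{k+1}}\right)=\sum_{k\ge 0}\varphi_f\!\left(L^{*}\cdot t^{k}\right)\frac{1}{z^{k+1}},
\]
which is the claim. As an independent check one may instead expand directly, using $\tfrac{d^{i}}{dz^{i}}z^{-k-1}=(-1)^{i}\tfrac{(k+i)!}{k!}z^{-k-1-i}$ and $P_i(z)=\sum_j p_{i,j}z^{j}$: the coefficient of $z^{-n}$ (for $n\ge 1$) in $L\cdot f$ comes out to $\sum_{i,j}p_{i,j}(-1)^{i}\tfrac{(j+n-1)!}{(j+n-1-i)!}f_{j+n-1-i}$, summed over the pairs $(i,j)$ with $j+n-1-i\ge 0$, and this matches $\varphi_f(L^{*}\cdot t^{n-1})$ term by term.

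There is no deep obstacle here, the argument being purely formal; the points needing a little care are all bookkeeping. In the direct computation one must check that the indices occurring in the polar part of $L\cdot f$ --- those with $j+n-1-i\ge 0$ --- are exactly those for which $\tfrac{d^{i}}{dt^{i}}t^{j+n-1}$ does not vanish, so that no term is spuriously created or dropped. And throughout one uses that the extension $\varphi_f\colon K[z,t][[1/z]]\to K[z][[1/z]]$ is well defined, $K[z]$-linear and commutes with $\tfrac{d}{dz}$ (all recorded before the statement), that $L\cdot f$ genuinely lies in $K((1/z))$ (it does, since $L$ has polynomial coefficients and $f\in (1/z)\cdot K[[1/z]]$), and that $(L_1L_2)^{*}=L^{*}_2L^{*}_1$ --- the last being what makes this criterion painless to apply to composite operators such as $\left(\tfrac{d}{dz}\right)^{m-1}L$ further on. The actual content is just recognizing the right formulation, namely the explicit principal-part identity above.
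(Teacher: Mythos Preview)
Your argument is correct. The paper does not prove this proposition here --- it is quoted from \cite[Corollary~2.6]{Kaw} --- so there is no in-paper proof to compare against; your self-contained derivation via the Lagrange-type identity $L_z\cdot\tfrac{1}{z-t}\equiv L^{*}_t\cdot\tfrac{1}{z-t}\pmod{K[z,t]}$ (together with the direct coefficient check) is clean, complete, and exactly the kind of computation the cited result encapsulates.
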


\subsection{Order $1$ differential operator and Pad\'{e}-type approximants} \label{Pade subsection}
Let $K$ be a field of characteristic $0$ and $a(z),b(z)\in K[z]$. Put ${\rm{deg}}\,a=u, {\rm{deg}}\,b=v$, $w=\max\{u-2,v-1\}$ and
\begin{align} \label{a,b}
a(z)=\sum_{i=0}^u a_iz^i, \ \ b(z)=\sum_{j=0}^{v}b_jz^j.
\end{align} 
We now assume 
\begin{align} 
& \ \ \ \ \ \ \ \ \ \ \ \ \ \ \ \ \ \ \ \ \ \ \ \ \ \ \ \ \  w\ge0,\label{w}\\
&a_u(k+u)+b_{v} \neq 0 \ \ \ \text{for all} \ \  k\ge0 \ \ \text{if} \ \ u-1=v. \label{assump a,b}
\end{align}
Define the order $1$ differential operator with polynomial coefficients
$$L=-a(z)\dfrac{d}{dz}+b(z)\in K\left[z,\tfrac{d}{dz}\right].$$
We consider the Laurent series $f(z)\in (1/z)\cdot K[[1/z]]$ such that $L\cdot f(z)\in K[z]$ and construct the Pad\'{e} approximation of $f(z)$.
The following lemma is proven in \cite[Lemma~$4.1$]{Kaw}. For the sake of completion, we recall the proof in the present article. 
\begin{lemma} \label{solutions}
We keep the notation above.
Then there exist $f_0(z),\ldots,f_{w}(z)\in (1/z)\cdot K[[1/z]]$ that are linearly independent over $K$ and satisfy $L\cdot f_j(z)\in K[z]$ with at most degree $w$ for $0\le j \le w$. 
\end{lemma}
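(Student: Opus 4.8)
The plan is to produce the $f_j(z)$ by solving, term by term in powers of $1/z$, the linear recurrence coming from $L\cdot f(z)=R(z)$ with $R(z)\in K[z]$ of degree at most $w$, and to show that the space of such solutions has dimension exactly $w+1$. Write $f(z)=\sum_{k\ge 0}c_k/z^{k+1}$ and $R(z)=\sum_{\ell=0}^{w}r_\ell z^\ell$. Applying $L=-a(z)\frac{d}{dz}+b(z)$ to $f$ and collecting the coefficient of each power $z^\ell$ gives a system of linear equations in the unknowns $(c_k)_{k\ge 0}$ and $(r_\ell)_{0\le\ell\le w}$. For $\ell\ge 0$ these equations determine the $r_\ell$ in terms of finitely many $c_k$, so the $r_\ell$ are free once the $c_k$ are chosen; the genuine constraints are the equations indexed by $\ell\le -1$, i.e.\ the vanishing of the coefficients of $z^{-1},z^{-2},\dots$ in $L\cdot f$. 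First I would write this tail recurrence explicitly: the coefficient of $z^{-s-1}$ ($s\ge 0$) in $L\cdot f$ has the shape $\big(a_u(s+u)+b_v\big)c_{s+w+1}+(\text{earlier }c_k\text{'s})$ when $u-1=v$, and more simply a leading term $b_v c_{s+?}$ or $a_u(s+?)c_{s+?}$ in the other cases covered by \eqref{w}.

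The next step is the dimension count. Choosing $c_0,\dots,c_{w}$ arbitrarily in $K$, the hypothesis \eqref{assump a,b} (and the analogous non-degeneracy automatic from $w\ge 0$ when $u-1\ne v$) guarantees that the leading coefficient $a_u(s+u)+b_v$, resp. the relevant $b_v$ or $a_u(\cdot)$, is nonzero for every $s\ge 0$, so the tail recurrence determines $c_{w+1},c_{w+2},\dots$ uniquely. Hence the solution space of $\{f\in(1/z)K[[1/z]] : L\cdot f\in K[z],\ \deg(L\cdot f)\le w\}$ is a $K$-vector space of dimension exactly $w+1$, and I would take $f_0,\dots,f_{w}$ to be the basis dual to the initial data, i.e.\ $f_j$ is the unique solution with $c_k=\delta_{jk}$ for $0\le k\le w$. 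These are visibly linearly independent over $K$ (look at the coefficients of $z^{-1},\dots,z^{-w-1}$), and by construction $L\cdot f_j\in K[z]$ with degree at most $w$. This is exactly the statement of the lemma.

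The one point needing care — and the main obstacle — is making the case analysis in the recurrence clean: the index of the ``first truly constraining equation'' and the identity of its leading coefficient depend on whether $w=u-2$ or $w=v-1$ (equivalently on the sign of $(u-1)-v$), and the borderline case $u-1=v$ is precisely where hypothesis \eqref{assump a,b} is needed to keep that leading coefficient from vanishing. I would handle this by introducing $w=\max\{u-2,v-1\}$ once and for all, observing that in every case the coefficient of $z^{-s-1}$ in $L\cdot f$ is $\lambda_s\,c_{s+w+1}+(\text{lower-index terms})$ with $\lambda_s\ne 0$ for all $s\ge 0$ under \eqref{w}–\eqref{assump a,b}, and then the forward solvability of the recurrence is immediate. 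Everything else is bookkeeping with Pochhammer-free linear algebra over $K$; no analytic input is required at this stage.
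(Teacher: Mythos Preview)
Your proposal is correct and follows essentially the same route as the paper: compute $L\cdot f$ for $f=\sum_{k\ge 0}c_k/z^{k+1}$, separate the polynomial part (degree $\le w$) from the tail, observe that the vanishing of each tail coefficient gives a recurrence whose leading term is $\lambda_s c_{s+w+1}$ with $\lambda_s\neq 0$ by the case analysis on $u-1$ versus $v$ and assumption~\eqref{assump a,b}, and conclude that the map $(c_0,\dots,c_w)\mapsto f$ is a $K$-linear isomorphism onto the solution space. The paper states this isomorphism explicitly (your choice $c_k=\delta_{jk}$ just picks the obvious basis), so there is no substantive difference.
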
 
\begin{proof}
Let $f(z)=\sum_{k=0}^{\infty}f_k/z^{k+1}\in (1/z)\cdot K[[1/z]]$ be a Laurent series. 
There exists a polynomial $A(z)\in K[z]$ that depends on the operator $L$ and $f$ with ${\rm{deg}}\,A\le w$ and satisfying
\begin{align} \label{Df}
L\cdot f(z)=A(z)
+\sum_{k=0}^{\infty}\dfrac{\sum_{i=0}^ua_i(k+i)f_{k+i-1}+\sum_{j=0}^vb_jf_{k+j}}{z^{k+1}}.
\end{align}
Put $$\sum_{i=0}^ua_i(k+i)f_{k+i-1}+\sum_{j=0}^vb_jf_{k+j}=c_{k,0}f_{k-1}+\cdots+c_{k,w}f_{k+w}+c_{k,w+1}f_{k+w+1} \ \ \ \text{for} \ \ \ k\ge 0,$$ with $c_{0,0}=0$.
We note that $c_{k,l}$ depends only on $a(z),b(z)$.
Notice that $c_{k,w+1}$ is $a_u(k+u)$ if $u-2>v-1$, $b_v$ if $u-2<v-1$ and $a_u(k+u)+b_v$ if $u-2=v-1$.
Then the assumption~\eqref{assump a,b} ensures that $\min\{k\ge 0\mid c_{k',w+1}\neq 0 \ \text{for all} \ k'\ge k\}=0$
and thus the $K$-linear map:
\begin{align}  \label{syokiti}
K^{w+1}\longrightarrow \{f\in (1/z)\cdot K[[1/z]]\mid L\cdot f\in K[z]\}; \ \ (f_0,\ldots,f_w)\mapsto \sum_{k=0}^{\infty} \dfrac{f_k}{z^{k+1}},
\end{align}
where, for $k\ge w+1$, $f_k$ is determined inductively by 
\begin{align} \label{recurrence general}
\sum_{i=0}^ua_i(k+i)f_{k+i-1}+\sum_{j=0}^vb_jf_{k+j}=0 \ \ \text{for} \ \ k\ge 0 
\end{align}
is an isomorphism.
This completes the proof of Lemma~$\ref{solutions}$.
\end{proof}

Let us fix Laurent series $f_0(z),\ldots,f_{w}(z)\in (1/z)\cdot K[[1/z]]$ such that these series are linearly independent over $K$ and $L\cdot f_j(z)\in K[z]$ for $0\le j \le w$.
We denote the formal integration associated to $f_j$ by $\varphi_{f_j}=\varphi_j$.
For a nonnegative integer $n$, we denote the $n$-th Rodriges operator associated with $L$ ({\it refer} \cite[Equation $(2.5)$]{A-B-V} and \cite[Definition $3.1$]{Kaw}) by 
\begin{align*} 
R_n=\dfrac{1}{n!}\left(\dfrac{d}{dz}+\dfrac{b(z)}{a(z)}\right)^na(z)^n.
\end{align*}
The differential operator $R_n$ can be decomposed as follows:
\begin{lemma} \label{decompose R} $(${\rm{\cite[Lemma~$4.3$]{Kaw}}}$)$
Let $w(z)$ be a non-zero solution of the differential operator $L$ in a differential extension $\mathcal{K}$ of $K(z)$. 
In the ring $\mathcal{K}[\tfrac{d}{dz}]$, we have the equalities: 
$$R_n=\dfrac{1}{n!}w(z)^{-1}\dfrac{d^n}{dz^n}w(z)a(z)^n=\dfrac{1}{n!}R_{1}(R_{1}+a^{\prime}(z))\cdots (R_{1}+(n-1)a^{\prime}(z)).$$
\end{lemma}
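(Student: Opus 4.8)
The plan is to reduce both equalities to elementary manipulations of first-order operators in $\mathcal{K}[\tfrac{d}{dz}]$, using only the relation $L\cdot w=0$. Write $D=\tfrac{d}{dz}$, and for $g\in\mathcal{K}$ denote also by $g$ the operator ``multiplication by $g$''. Since $w$ is a nonzero solution of $L=-a(z)D+b(z)$ we have $w'/w=b(z)/a(z)$ in $\mathcal{K}$, hence the operator identity $w^{-1}Dw=D+w'/w=D+b(z)/a(z)$. Set $\widetilde D:=D+b(z)/a(z)=w^{-1}Dw$. Raising to the $n$-th power makes the inner factors $w\cdot w^{-1}$ telescope, so $\widetilde D^n=w^{-1}D^{n}w$, and therefore
\[
R_n=\tfrac{1}{n!}\,\widetilde D^n a(z)^n=\tfrac{1}{n!}\,w^{-1}\tfrac{d^n}{dz^n}\,w\,a(z)^n,
\]
which is the first equality; note also that $R_1=\widetilde D\,a(z)$.

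For the second equality I would prove the operator identity
\[
\widetilde D^n a(z)^n=R_1\bigl(R_1+a'(z)\bigr)\cdots\bigl(R_1+(n-1)a'(z)\bigr)\qquad\text{in }\mathcal{K}[\tfrac{d}{dz}]
\]
by induction on $n$, the case $n=1$ being $R_1=\widetilde D\,a(z)$ (and $n=0$ trivial). For the inductive step it is enough to show
\[
\widetilde D^n a(z)^n=\bigl(\widetilde D^{\,n-1} a(z)^{n-1}\bigr)\bigl(\widetilde D\,a(z)+(n-1)a'(z)\bigr),
\]
since the first factor on the right is, by the inductive hypothesis, the $(n-1)$-fold product, and we are appending the correct new rightmost factor $R_1+(n-1)a'(z)$. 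Pulling $\widetilde D^{\,n-1}$ out in front, this reduces to the single identity
\[
\widetilde D\,a(z)^n-a(z)^{n-1}\,\widetilde D\,a(z)=(n-1)\,a(z)^{n-1}a'(z),
\]
which follows from the commutation rule $\widetilde D\,g=g\,\widetilde D+g'$ valid for every $g\in\mathcal{K}$ (indeed $\widetilde D(gf)=g'f+g\,\widetilde Df$): taking $g=a(z)^n$ gives $\widetilde D\,a(z)^n=a(z)^n\widetilde D+n\,a(z)^{n-1}a'(z)$, taking $g=a(z)$ gives $a(z)^{n-1}\widetilde D\,a(z)=a(z)^n\widetilde D+a(z)^{n-1}a'(z)$, and subtracting these proves the claim. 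Multiplying through by $1/n!$ and recalling $R_n=\tfrac{1}{n!}\widetilde D^n a(z)^n$ then yields the second equality.

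I do not expect any genuine obstacle: the argument is entirely formal. The only points needing care are the bookkeeping of the non-commutativity between $\widetilde D$ and the multiplication operators $a(z)$ and $a'(z)$ (so that the new factor is appended on the correct side in the induction), and the observation that all the manipulations are legitimate in $\mathcal{K}[\tfrac{d}{dz}]$ precisely because $w$ is invertible in the field $\mathcal{K}$.
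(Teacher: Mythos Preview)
Your proof is correct. The paper itself does not give a proof of this lemma but merely cites it from the author's earlier work \cite[Lemma~4.3]{Kaw}, so there is nothing to compare against here; your conjugation identity $\widetilde D=w^{-1}Dw$ for the first equality and the one-line commutator computation $\widetilde D\,a^n=a^{n-1}\bigl(\widetilde D\,a+(n-1)a'\bigr)$ driving the induction for the second are exactly the expected elementary arguments.
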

Making use of the Rodrigues operator associated with $L$, we construct explicit Pad\'{e} approximants of the family $(f_j)_j$.
The following theorem is a particular case of \cite[Theorem~$4.2$,\ Lemma~$5.1$]{Kaw}.
\begin{theorem} \label{Pade fj}
For a nonnegative integer $\ell$, define polynomials 
\begin{align}
P_{n,\ell}(z)=R_{n}\cdot z^{\ell}, \ \ \ Q_{n,j,\ell}(z)=\varphi_{j}\left(\dfrac{P_{n,\ell}(z)-P_{n,\ell}(t)}{z-t}\right) \ \ \text{for} \ \ 0\le j \le w. \label{main Pade}
\end{align}
Then the following properties hold.

\medskip

$(i)$ The vector of polynomials $(P_{n,\ell}(z),Q_{n,j,\ell}(z))$ forms a weight $n$ Pad\'{e}-type approximant for $f_0(z), \dots, f_{w}(z)$.

\medskip

$(ii)$ Put the Pad\'{e}-type approximation of $(f_j)_j$ by 
\begin{align}\label{remainder}
\mathfrak{R}_{n,j,\ell}(z)=P_{n,\ell}(z)f_j(z)-Q_{n,j,\ell}(z) \ \ \text{for} \ \ 0\le j \le w.
\end{align}
We have \begin{align*} 
\mathfrak{R}_{n,j,\ell}(z)=(-1)^n\sum_{k=n}^{\infty}\binom{k}{n}\dfrac{\varphi_{j}(t^{k+\ell-n}{a}(t)^{n})}{z^{k+1}}.
\end{align*}
\end{theorem}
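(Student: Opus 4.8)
The plan is to reduce both assertions to one purely algebraic congruence in $K[t]$ modulo $L^{*}\cdot K[t]$, and then to prove that congruence by an induction on $n$ arranged so that the unwanted terms cancel. First, I record the shape of the remainders: writing $\varphi_{j}=\varphi_{f_{j}}$ and combining the identities $f_{j}(z)=\varphi_{j}\!\bigl(1/(z-t)\bigr)$ and $P(z)f_{j}(z)-\varphi_{j}\!\bigl((P(z)-P(t))/(z-t)\bigr)\in(1/z)\cdot K[[1/z]]$ recalled just after \eqref{varphi f} with the $K[z]$-linearity of the extension of $\varphi_{j}$, one obtains
\[
\mathfrak{R}_{n,j,\ell}(z)=P_{n,\ell}(z)f_{j}(z)-Q_{n,j,\ell}(z)=\varphi_{j}\!\left(\frac{P_{n,\ell}(t)}{z-t}\right)=\sum_{k\ge0}\frac{\varphi_{j}\bigl(t^{k}P_{n,\ell}(t)\bigr)}{z^{k+1}}.
\]
Hence the whole theorem reduces to showing that, for all $k\ge0$,
\[
\varphi_{j}\bigl(t^{k}P_{n,\ell}(t)\bigr)=(-1)^{n}\binom{k}{n}\,\varphi_{j}\bigl(t^{k+\ell-n}a(t)^{n}\bigr),
\]
with the convention that the right-hand side is $0$ when $k<n$. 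Indeed the vanishing for $0\le k\le n-1$ gives ${\rm{ord}}_{\infty}\,\mathfrak{R}_{n,j,\ell}\ge n+1$, which together with the degree bound $\deg P_{n,\ell}\le\ell+n(w+1)$ (read off from Lemma~\ref{decompose R}) yields assertion $(i)$ (alternatively one quotes Lemma~\ref{equivalence Pade}), while the identity for $k\ge n$ is exactly $(ii)$.

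Next, by Proposition~\ref{inc} we have $L^{*}\cdot K[t]\subseteq\ker\varphi_{j}$ for every $j$ since $L\cdot f_{j}\in K[z]$, so it suffices to prove the algebraic congruence $t^{k}P_{n,\ell}(t)\equiv(-1)^{n}\binom{k}{n}t^{k+\ell-n}a(t)^{n}\pmod{L^{*}\cdot K[t]}$. I would prove by induction on $n$ the strengthening $(\ast_{n})$: for every $p\in K[t]$ and all $k\ge0$,
\[
t^{k}\,R_{n}^{(t)}\!\cdot p\;\equiv\;(-1)^{n}\binom{k}{n}\,a(t)^{n}\,t^{k-n}\,p\pmod{L^{*}\cdot K[t]},
\]
where $R_{n}^{(t)}$ denotes $R_{n}$ with $z$ renamed $t$ (this is meaningful because $R_{n}$ has polynomial coefficients by Lemma~\ref{decompose R}); the case $p=t^{\ell}$ gives what is needed, as $P_{n,\ell}(t)=R_{n}^{(t)}\!\cdot t^{\ell}$. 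The inputs are three elementary facts, all coming from $R_{1}^{(t)}=L^{*}=a(t)\tfrac{d}{dt}+a'(t)+b(t)$ (the adjoint $\iota(L)$) and from Lemma~\ref{decompose R}: the commutations
\[
t^{m}L^{*}(g)=L^{*}(t^{m}g)-m\,a(t)\,t^{m-1}g,\qquad a(t)^{r}L^{*}(g)=L^{*}\!\bigl(a(t)^{r}g\bigr)-r\,a(t)^{r}a'(t)\,g\qquad(g\in K[t]),
\]
and the Rodrigues recursion $R_{n+1}^{(t)}\!\cdot p=\tfrac{1}{n+1}R_{n}^{(t)}\!\cdot\bigl(L^{*}(p)+n\,a'(t)\,p\bigr)$.

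The induction then runs as follows. $(\ast_{0})$ is trivial. Assuming $(\ast_{n})$, apply it with $p$ replaced by $q:=L^{*}(p)+n\,a'(t)\,p$ and use the Rodrigues recursion to get $t^{k}R_{n+1}^{(t)}\!\cdot p\equiv\tfrac{(-1)^{n}}{n+1}\binom{k}{n}a(t)^{n}t^{k-n}\bigl(L^{*}(p)+na'(t)p\bigr)\pmod{L^{*}K[t]}$. Rewrite $a(t)^{n}t^{k-n}L^{*}(p)$ using first the second commutation identity with $r=n$ and then the first with $m=k-n$ (trivial when $k=n$), giving $a(t)^{n}t^{k-n}L^{*}(p)\equiv-(k-n)a(t)^{n+1}t^{k-n-1}p-n\,a(t)^{n}a'(t)\,t^{k-n}p\pmod{L^{*}K[t]}$. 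Substituting, the two $a(t)^{n}a'(t)$-terms cancel and one is left with $t^{k}R_{n+1}^{(t)}\!\cdot p\equiv\tfrac{(-1)^{n+1}}{n+1}\binom{k}{n}(k-n)\,a(t)^{n+1}t^{k-n-1}p$; since $\tfrac{1}{n+1}\binom{k}{n}(k-n)=\binom{k}{n+1}$, this is $(\ast_{n+1})$ (for $k\le n$ the common value is $0$, as $\binom{k}{n+1}=0$). This completes the induction and the proof.

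The real obstacle is the inductive step above: one must introduce precisely the auxiliary polynomial $q=L^{*}(p)+na'(t)p$ so that the Rodrigues recursion of Lemma~\ref{decompose R} applies, and then perform the two commutations in the right order so that the spurious $a'(t)$-terms cancel and the binomial identity $\tfrac{1}{n+1}\binom{k}{n}(k-n)=\binom{k}{n+1}$ emerges. Everything else — the closed form of $\mathfrak{R}_{n,j,\ell}$ and the passage to the quotient $K[t]/L^{*}K[t]$ — is formal, resting only on Proposition~\ref{inc} and the two identities recalled right after \eqref{varphi f}.
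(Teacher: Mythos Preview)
Your proof is correct. Both you and the paper reduce to the same identity $\varphi_{j}(t^{k}P_{n,\ell}(t))=(-1)^{n}\binom{k}{n}\varphi_{j}(t^{k+\ell-n}a(t)^{n})$ and both pass through Proposition~\ref{inc} to work modulo $L^{*}\cdot K[t]$, so the strategies coincide at that level. The difference is in how the congruence is established. The paper does not argue from scratch: for part~(i) it simply invokes \cite[Theorem~4.2]{Kaw}, and for part~(ii) it quotes an operator identity from \cite[Proposition~3.2\,(i)]{Kaw} of the form $t^{k}\mathcal{E}_{a,b}^{n}=\sum_{i=0}^{n}c_{n,k,i}\mathcal{E}_{a,b}^{n-i}t^{k-i}$ (with $\mathcal{E}_{a,b}=\tfrac{d}{dt}+b/a$, so $L^{*}=\mathcal{E}_{a,b}a$) together with \cite[Proposition~3.2\,(ii)]{Kaw} to kill all terms with $i<n$, leaving only the $i=n$ term with the explicit constant $c_{n,k,n}=(-1)^{n}k(k-1)\cdots(k-n+1)$. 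Your argument instead proves the congruence $(\ast_{n})$ directly by induction on $n$, using only the Rodrigues recursion $R_{n+1}=\tfrac{1}{n+1}R_{n}(R_{1}+na')$ from Lemma~\ref{decompose R} and two elementary commutator identities for $L^{*}$. This is more self-contained (no appeal to \cite{Kaw} beyond what the present paper already states), handles $(i)$ and $(ii)$ in one stroke via the vanishing for $k<n$, and makes transparent why the binomial $\binom{k}{n}$ appears, through the Pascal-type step $\tfrac{k-n}{n+1}\binom{k}{n}=\binom{k}{n+1}$. The paper's route is shorter on the page because the work is outsourced, but your induction is the cleaner stand-alone argument.
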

\begin{proof}
$(i)$ The statement is derived by applying \cite[Theorem~$4.2$]{Kaw} in the special case where $d=1$ and $D_1=L$.

\medskip

$(ii)$ By definition of $\mathfrak{R}_{n,j,\ell}$, we see $\mathfrak{R}_{n,j,\ell}(z)=\varphi_j\left(P_{n,\ell}(t)/(z-t)\right)$. Substituting the expansion $1/(z-t)=\sum_{k=0}^{\infty}t^k/z^{k+1}$ into this expression, we obtain 
$$\mathfrak{R}_{n,j,\ell}(z)=\sum_{k=n}^{\infty}\dfrac{\varphi_j(t^kP_{n,\ell}(t))}{z^{k+1}}.$$
The above equality implies it is sufficient to prove 
\begin{align} \label{suff}
\varphi_j(t^kP_{n,\ell}(t))=(-1)^n\binom{k}{n}\varphi_j(t^{\ell+k-n}{a}(t)^{n}) \ \ \text{for} \ \ k\ge n.
\end{align}
Now we fix an integer $k\ge n$. 
To prove equation~\eqref{suff}, we define the differential operator $\mathcal{E}_{a,b}=\tfrac{d}{dt}+b(t)/a(t)\in K(t)[\tfrac{d}{dt}]$. Notice $L^{*}=\mathcal{E}_{a,b}a(t)$.
By \cite[Proposition~$3.2 \, (i)$]{Kaw}, there exists a set of integers $\{c_{n,k,i};\,0\le i \le n\}$ such that $c_{n,k,n}=(-1)^nk(k-1)\cdots (k-n+1)$ and $$t^k\mathcal{E}^n_{a,b}=\sum_{i=0}^nc_{n,k,i}\mathcal{E}_{a,b}^{n-i}t^{k-i}.$$
This implies 
\begin{align} \label{bunkai 1}
t^kP_{n,\ell}(t)=\dfrac{t^k}{n!}\mathcal{E}^n_{a,b}a(t)^n\cdot t^{\ell}=\sum_{i=0}^n\dfrac{c_{n,k,i}}{n!}\mathcal{E}_{a,b}^{n-i}t^{k-i}a(t)^{n}\cdot t^{\ell}.
\end{align}
From \cite[Proposition~$3.2 \, (ii)$]{Kaw}, we know
\begin{align} \label{bunkai 2}
\mathcal{E}_{a,b}^{n-i}t^{k-i}a(t)^{n}\cdot t^{\ell}\in L^{*}\cdot K[t] \ \ \text{for} \ \ 0\le i \le n-1.
\end{align}
Combining Equations \eqref{bunkai 1}, \eqref{bunkai 2} together with $L^{*}\cdot K[t]\subseteq {\rm{ker}}\,\varphi_j$ (refer Proposition~$\ref{inc}$) deduces 
$$\varphi_j(t^kP_{n,\ell}(t))=\varphi_j\left(\dfrac{c_{n,k,n}}{n!}t^{k+\ell-n}a(t)^n\right)=(-1)^n\binom{k}{n}\varphi_j(t^{k+\ell-n}a(t)^n).$$
This establishes equation~\eqref{suff} and completes the proof of $(ii)$. 
\end{proof} 

Let $n$ be a nonnegative integer. Denote the determinant of the matrix formed by the Pad\'{e} approximants of $(f_j)_j$ in Theorem~$\ref{Pade fj}$ by 
$$\Delta_n(z)={\rm{det}}
\begin{pmatrix}
P_{n,0}(z) & P_{n,1}(z) & \cdots & P_{n,w+1}(z)\\
Q_{n,0,0}(z)& Q_{n,0,1}(z) & \cdots & Q_{n,0,w+1}(z)\\
\vdots & \vdots & \ddots & \vdots\\
Q_{n,w,0}(z)& Q_{n,w,1}(z) & \cdots & Q_{n,w,w+1}(z)
\end{pmatrix}.
$$ 
In the next section, we consider the non-vanishing of $\Delta_n(z)$.

\section{Linear independence of Pad\'{e}-type approximants} \label{Pade linear independence}
In this section, we keep the notation in Subsection $\ref{Pade subsection}$. Recall the polynomials $a(z),b(z)\in K[z]$ defined in equation~\eqref{a,b} which satisfy \eqref{w} and \eqref{assump a,b}. 
Recall $w=\max\{{\rm{deg}}\,a-2,{\rm{deg}}\,b-1\}$.
This section is devoted to prove the next theorem.
\begin{theorem} \label{nonvanish Delta}
For the polynomials $a(z),b(z)$, we assume 
\begin{align} 
&na'(z)+b(z) \ \text{and} \ a(z) \ \text{are coprime for any positive integer} \ n, \label{key assump2}\\
&a_u((k+1)u+n)+b_{v}\neq 0 \ \ \text{for any} \ \ k,n\ge 0 \ \ \text{if} \ \ u-1=v. \label{important assump} 
\end{align}
Then $\Delta_n(z)\in K\setminus\{0\}$ for any $n\ge 0$.
\end{theorem}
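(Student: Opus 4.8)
The plan is to show that $\Delta_n(z)$ is a non-zero element of $K$ in three moves: first prove $\Delta_n\in K$, then identify this constant as an explicit non-zero scalar times $\det\bigl(\varphi_j(t^\ell a(t)^n)\bigr)_{0\le j,\ell\le w}$, and finally prove that this last determinant does not vanish. It is in this final move that the hypotheses \eqref{key assump2} and \eqref{important assump} enter in an essential way.

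First I would pass to the \emph{remainder determinant}. Subtracting, for each $0\le j\le w$, $f_j(z)$ times the first row from the $(j+2)$-nd row replaces $Q_{n,j,\ell}$ by $-\mathfrak{R}_{n,j,\ell}$ (see \eqref{remainder}), so $(-1)^{w+1}\Delta_n(z)$ is the determinant of the matrix with first row $(P_{n,\ell})_{0\le\ell\le w+1}$ and remaining rows $(\mathfrak{R}_{n,j,\ell})_{0\le\ell\le w+1}$, $0\le j\le w$. Writing $R_n=\tfrac1{n!}R_1(R_1+a')\cdots(R_1+(n-1)a')$ by Lemma~\ref{decompose R}, one checks that each factor $R_1+ja'$ raises the degree of a polynomial by exactly $w+1$, the only delicate point being that when $u-1=v$ the leading coefficients arising in the computation have the shape $a_uN+b_v$ with $N\ge u$ an integer, hence are non-zero by \eqref{important assump}. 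Thus $\deg P_{n,\ell}=\ell+n(w+1)$ with non-vanishing leading coefficient, while ${\rm{ord}}_{\infty}\,\mathfrak{R}_{n,j,\ell}\ge n+1$ by Theorem~\ref{Pade fj}~(ii). Expanding the remainder determinant over permutations $\sigma$ of $\{0,\ldots,w+1\}$, the term attached to $\sigma$ uses $P_{n,\sigma(0)}$ and $w+1$ remainders, hence has order at $\infty$ at least $-\deg P_{n,\sigma(0)}+(w+1)(n+1)=(w+1)-\sigma(0)\ge 0$, so $\Delta_n\in K$. A permutation contributes to the $z^{0}$-coefficient only when $\sigma(0)=w+1$ and every remainder involved has exact order $n+1$; since the coefficient of $z^{-(n+1)}$ in $\mathfrak{R}_{n,j,\ell}$ is $(-1)^n\varphi_j(t^\ell a(t)^n)$ by Theorem~\ref{Pade fj}~(ii), collecting these contributions yields
\[
\Delta_n=\pm(-1)^{n(w+1)}\bigl(\text{leading coefficient of }P_{n,w+1}\bigr)\,\det\bigl(\varphi_j(t^\ell a(t)^n)\bigr)_{0\le j,\ell\le w},
\]
and, the leading coefficient being non-zero, it suffices to prove $\det\bigl(\varphi_j(t^\ell a(t)^n)\bigr)_{0\le j,\ell\le w}\ne 0$.

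To prove this I would argue by contradiction and translate into the language of the $f$-integration map. If the determinant vanished, there would be scalars $\lambda_0,\ldots,\lambda_w$, not all zero, such that $g:=\sum_j\lambda_jf_j\ne 0$ satisfies $L\cdot g\in K[z]$ and $\varphi_g(t^\ell a(t)^n)=0$ for $0\le\ell\le w$; equivalently $\varphi_g$ vanishes on the $K$-span of $a(t)^n,ta(t)^n,\ldots,t^{w}a(t)^n$, and also on $L^{*}K[t]$ by Proposition~\ref{inc}. Since $\deg(L^{*}h)=\deg h+(w+1)$ with non-vanishing leading coefficient (this uses \eqref{assump a,b} when $u-1=v$), one has $K[t]=L^{*}K[t]\oplus\{p\in K[t]:\deg p\le w\}$, so $\varphi_g$ would be identically zero, hence $g=0$, once one knows:
\[
p(t)\,a(t)^n\in L^{*}K[t]\ \text{ and }\ \deg p\le w\ \Longrightarrow\ p=0.
\]
This implication is the technical heart of the argument and the step I expect to be the main obstacle.

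I would prove it by induction on $n$, carried out simultaneously for every operator $D=-a(z)\tfrac{d}{dz}+c(z)$ with $c\in\{b,\,b+a',\,b+2a',\ldots\}$; each such $D$ satisfies the standing hypotheses with the same $w$, together with the analogue of \eqref{key assump2}, which in particular makes $a(z)$ and $c(z)+a'(z)$ coprime. For $n=0$ the implication is immediate, since $\deg(D^{*}q)=\deg q+(w+1)>w$ when $q\ne 0$. For $n\ge 1$, from $p\,a^n=D^{*}q=(c+a')q+aq'$ and $a\mid p\,a^n$ one gets $a\mid(c+a')q$, whence $a\mid q$ by the coprimality above; writing $q=a\,q_1$ and using the identity $D^{*}(a\,h)=a\bigl((c+2a')h+ah'\bigr)$ — in which the bracketed operator is the adjoint of $\widetilde D:=-a(z)\tfrac{d}{dz}+\bigl(c(z)+a'(z)\bigr)$ — one obtains $p\,a^{n-1}=\widetilde D^{*}q_1$, and the inductive hypothesis applied to $\widetilde D$ gives $p=0$. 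Everything outside this induction is a formal consequence of the Padé-type construction recalled in Section~\ref{Pade}.
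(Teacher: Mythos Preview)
Your proof is correct and follows the paper's approach: the reduction of $\Delta_n$ to a non-zero scalar times $\det\bigl(\varphi_j(t^\ell a(t)^n)\bigr)_{0\le j,\ell\le w}$ is exactly Proposition~\ref{Delta and Theta}, and your key implication ``$p\,a^n\in L^{*}K[t]$, $\deg p\le w\Rightarrow p=0$'' is equivalent to Lemma~\ref{nonvanish key1} (divisibility $a^n\mid P$) followed by the degree count, both proved via the same coprimality-of-$a$-and-$(k{+}1)a'+b$ induction. The only cosmetic differences are that you argue via a row dependence (working with a single $\varphi_g$) whereas the paper uses a column dependence together with the characterization $\bigcap_j\ker\varphi_j=L^{*}K[t]$ of Lemma~\ref{kernel}, and you absorb the final degree argument into the base case by letting the operator shift through the family $L^{*}+ka'$ during the induction rather than peeling off all factors of $a$ first.
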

Our strategy of the proof of Theorem~$\ref{nonvanish Delta}$ is the following. 
By definition $\Delta_n(z)$ is a polynomial.
We show in Proposition~\ref{Delta and Theta}, which is essentially an application of \cite[Lemma 4.2~(ii)]{DHK3}, that this polynomial is a constant, and we reduce the problem to showing that another determinant ${\rm{det}}\,\mathcal{M}_n$ is non-zero. 
This last property, established in Subsection~\ref{finish nonvanish}, will imply Theorem~\ref{nonvanish Delta}. 
In order to prove above results, we prepare the following lemma. 
\begin{lemma} \label{sufficient condition}
Denote the differential operator $-a(z)\tfrac{d}{dz}+b(z)$ by $L$. 
Let $k\in \N$ and $P(t)\in K[t]\setminus\{0\}$. 
For the polynomials $a(z),b(z)$, we assume equation~\eqref{important assump} holds. 
Then we have $${\rm{deg}}\, (L^{*}+ka'(t))\cdot P(t)={\rm{deg}}\,P+w+1.$$
\end{lemma}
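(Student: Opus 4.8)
The plan is to compute the leading term of $(L^{*}+ka'(t))\cdot P(t)$ directly and show it does not cancel. Recall $L^{*}=\iota(-a(z)\tfrac{d}{dz}+b(z))$. From the definition \eqref{iota diff}, $\iota(-a(z)\tfrac{d}{dz})=\tfrac{d}{dt}a(t)=a(t)\tfrac{d}{dt}+a'(t)$ and $\iota(b(z))=b(t)$, so $L^{*}=a(t)\tfrac{d}{dt}+a'(t)+b(t)$. Hence
\[
(L^{*}+ka'(t))\cdot P(t)=a(t)P'(t)+\bigl((k+1)a'(t)+b(t)\bigr)P(t).
\]
First I would write $P(t)=p_d t^d+(\text{lower order})$ with $p_d\neq 0$, $d=\deg P$. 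Then $a(t)P'(t)$ has degree $u+d-1$ with leading coefficient $d\,a_u p_d$ (if $d\ge 1$; it vanishes if $d=0$), while $\bigl((k+1)a'(t)+b(t)\bigr)P(t)$ has degree $\max\{u-1,v\}+d$ with leading behaviour governed by the top coefficient of $(k+1)a'(t)+b(t)$, namely $(k+1)u\,a_u$ coming from $a'$ in degree $u-1$, plus $b_v$ if $v=u-1$, and just $b_v$ in degree $v$ if $v>u-1$.

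Next I would split into the three cases according to $w=\max\{u-2,v-1\}$, i.e.\ $\deg P+w+1=\deg P+\max\{u-1,v\}$. If $u-2>v-1$, i.e.\ $v<u-1$, then $w+1=u-1$ and the coefficient of $t^{d+u-1}$ in $(L^{*}+ka'(t))\cdot P(t)$ equals $d\,a_u p_d+(k+1)u\,a_u p_d=(k+d+ku+u)a_u p_d$; since $a_u\neq 0$, $p_d\neq 0$, $k\ge 0$, $d\ge 0$, $u\ge 1$, this is a positive multiple of $a_u p_d$ — nonzero. If $u-2<v-1$, i.e.\ $v>u-1$, then $w+1=v$ and the top-degree term comes only from $b(t)P(t)$, with coefficient $b_v p_d\neq 0$. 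The delicate case is $u-1=v$: then $w+1=u-1=v$, and the coefficient of $t^{d+u-1}$ is $d\,a_u p_d+\bigl((k+1)u\,a_u+b_v\bigr)p_d=\bigl(a_u((k+d+1)u)+b_v+?\bigr)$ — more precisely $p_d\bigl(d a_u+(k+1)u a_u+b_v\bigr)=p_d\bigl(a_u((d+k+1)u)-(k+1)u a_u+ (k+1)u a_u\dots\bigr)$; cleaning up, it is $p_d\bigl(a_u((k+1)u+d)+b_v\bigr)$ when $d\ge 1$, wait — I need $d a_u+(k+1)u a_u+b_v = a_u\bigl(d+(k+1)u\bigr)+b_v$. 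Setting $k'$ so that $(k'+1)u=d+(k+1)u$ is not generally possible, so instead I invoke hypothesis \eqref{important assump} directly: I want $a_u(d+(k+1)u)+b_v\neq 0$ for all $d\ge 0$, $k\ge 0$. For $d=0$ this is $a_u((k+1)u)+b_v$, handled by \eqref{important assump} with that $k$; for $d\ge 1$ write $d+(k+1)u = (k+1)u+d$; since \eqref{important assump} reads $a_u((k+1)u+n)+b_v\neq 0$ for all $k,n\ge 0$, take $n=d$ and we are done. (When $d=0$, set $n=0$.)

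I would then note that in every case the purported leading term is genuinely of degree $\deg P+w+1$ and nonzero, and no higher-degree term can appear — indeed $a(t)P'(t)$ has degree at most $u+d-1\le \max\{u-1,v\}+d$ and $\bigl((k+1)a'+b\bigr)P$ has degree exactly $\max\{u-1,v\}+d=d+w+1$ — so $\deg (L^{*}+ka'(t))\cdot P(t)=\deg P+w+1$, as claimed. The only real subtlety is the case $u-1=v$, where cancellation between the $a'$-contribution and the $b$-contribution must be excluded; this is exactly what hypothesis \eqref{important assump} is designed to rule out, and checking that the index shift $n=d$ (including $d=0$) covers all needed instances is the one point requiring a moment's care. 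Everything else is a bookkeeping computation of leading coefficients.
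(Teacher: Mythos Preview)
Your proof is correct and follows essentially the same approach as the paper: reduce to examining the leading coefficient (the paper takes $P=t^N$, you keep a general leading term $p_d t^d$), then split into the three cases $v<u-1$, $v>u-1$, $u-1=v$ and identify the top coefficient, invoking \eqref{important assump} precisely in the last case with $n=d$. One inconsequential slip: in the case $v<u-1$ you write $d\,a_u p_d+(k+1)u\,a_u p_d=(k+d+ku+u)a_u p_d$, but the sum is $(d+ku+u)a_u p_d$; either way it is a positive integer times $a_u p_d$ and hence nonzero in characteristic~$0$, so the argument stands.
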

\begin{proof}
We may assume $P(t)=t^N$ for a nonnegative integer $N$.
Put $$\mathcal{P}(t)=\left(L^*+ka'(t)\right)\cdot t^N.$$ 
Then a direct computation yields that ${\rm{deg}}\, \mathcal{P} \le N+w+1$ and the coefficient of $t^{N+w+1}$ of $\mathcal{P}$ is $a_u(u(k+1)+N)+b_v$ if $u-1=v$, $b_v$ if $v>u-1$ and $a_u((k+1)u+N)$ if $u-1>v$. 
The assumption~\eqref{important assump} ensures the assertion.
This completes the proof of the statement. 
\end{proof}
Let $n$ be a nonnegative integer. Recall the polynomials $P_{n,\ell}(z),Q_{n,j,\ell}(z)$ defined in equation~\eqref{main Pade}.
Combining Lemmas $\ref{decompose R}$ and $\ref{sufficient condition}$ yields   
\begin{align} \label{deg of Pl}
{\rm{deg}}\, P_{n,\ell}=\ell+n(w+1).
\end{align}
Put the following $w+1$ by $w+1$ matrix by 
\begin{align*} 
\mathcal{M}_n=
\begin{pmatrix}
\varphi_{0}(a(t)^n) & \cdots & \varphi_{{0}}(t^{w}a(t)^n)\\
\vdots & \ddots & \vdots\\
\varphi_{{w}}(a(t)^n) & \cdots & \varphi_{{w}}(t^{w}a(t)^n)
\end{pmatrix}\in {\rm{Mat}}_{w+1}(K).
\end{align*}
We now prove $\Delta_n(z)$ is a constant. 
\begin{proposition} \label{Delta and Theta}
There exists a constant $c\in K$ such that $c$ is non-zero under the assumption~\eqref{important assump} and 
$$\Delta_n(z)=c \cdot {\rm{det}}\, \mathcal{M}_n.$$
In particular, we have $\Delta_n(z)\in K$.
\end{proposition}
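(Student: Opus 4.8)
The plan is to turn the matrix defining $\Delta_n(z)$ into one whose lower block consists of the Pad\'e-type approximations $\mathfrak{R}_{n,j,\ell}$, and then to read off its (constant) value from the behaviour at $z=\infty$; this is the manipulation underlying \cite[Lemma~4.2~(ii)]{DHK3}. Working in the ring $K[z][[1/z]]$, in which every $f_j$ lies, subtract $f_j(z)$ times the first row from the $(j+2)$-th row for $0\le j\le w$. Since $Q_{n,j,\ell}(z)=P_{n,\ell}(z)f_j(z)-\mathfrak{R}_{n,j,\ell}(z)$ by \eqref{remainder}, this leaves the determinant unchanged and yields
\[
\Delta_n(z)=\det\begin{pmatrix}
P_{n,0}(z) & \cdots & P_{n,w+1}(z)\\
-\mathfrak{R}_{n,0,0}(z) & \cdots & -\mathfrak{R}_{n,0,w+1}(z)\\
\vdots & & \vdots\\
-\mathfrak{R}_{n,w,0}(z) & \cdots & -\mathfrak{R}_{n,w,w+1}(z)
\end{pmatrix}.
\]

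I would then expand along the first row: $\Delta_n(z)=\sum_{\ell=0}^{w+1}(-1)^\ell P_{n,\ell}(z)M_\ell(z)$, where $M_\ell(z)$ is the $(w+1)\times(w+1)$ minor obtained by deleting the first row and the $\ell$-th column. By Theorem~\ref{Pade fj}~(ii), each $\mathfrak{R}_{n,j,\ell}$ has ${\rm ord}_\infty\ge n+1$, with coefficient of $z^{-(n+1)}$ equal to $(-1)^n\varphi_j(t^\ell a(t)^n)$; hence ${\rm ord}_\infty M_\ell\ge(w+1)(n+1)$ for every $\ell$. Since $\deg P_{n,\ell}=\ell+n(w+1)$ by \eqref{deg of Pl}, the $\ell$-th summand has order at $\infty$ at least $(w+1)-\ell$, so the summands with $\ell\le w$ lie in $(1/z)\cdot K[[1/z]]$ and the one with $\ell=w+1$ in $K[[1/z]]$. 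As $\Delta_n(z)$ is a polynomial with ${\rm ord}_\infty\Delta_n(z)\ge 0$, it is a constant, equal to the constant term of $(-1)^{w+1}P_{n,w+1}(z)M_{w+1}(z)$.

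Finally I would compute that constant term. Since $\deg P_{n,w+1}=(w+1)(n+1)$ is an \emph{equality} by \eqref{deg of Pl}, with leading coefficient $\lambda\in K\setminus\{0\}$, only the top-degree monomial of $P_{n,w+1}$ contributes, so the constant term is $\lambda$ times the coefficient of $z^{-(w+1)(n+1)}$ in $M_{w+1}(z)$. Picking the lowest-order term of each entry in the determinant expansion, this coefficient equals $\det\bigl((-1)^{n+1}\varphi_j(t^{\ell'}a(t)^n)\bigr)_{0\le j,\ell'\le w}=(-1)^{(n+1)(w+1)}\det\mathcal{M}_n$. Therefore $\Delta_n(z)=c\,\det\mathcal{M}_n$ with $c=(-1)^{(w+1)(n+2)}\lambda\neq 0$, and in particular $\Delta_n(z)\in K$. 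The only genuine input is the exactness of the degree identity $\deg P_{n,w+1}=(w+1)(n+1)$, which rests on Lemmas~\ref{decompose R} and~\ref{sufficient condition} and hence on assumption~\eqref{important assump}; it is precisely this exactness that identifies $\Delta_n$ with a nonzero multiple of $\det\mathcal{M}_n$, so I expect it — rather than the order bookkeeping, which is routine — to be the point requiring care.
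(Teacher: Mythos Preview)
Your proof is correct and follows essentially the same route as the paper's: row-reduce to replace the $Q_{n,j,\ell}$ by the remainders $\mathfrak{R}_{n,j,\ell}$, expand along the first row, use the order bound ${\rm ord}_\infty\mathfrak{R}_{n,j,\ell}\ge n+1$ together with the degree formula \eqref{deg of Pl} to see that $\Delta_n(z)$ is constant, and identify that constant via the leading coefficient of $P_{n,w+1}$ and the lowest-order coefficients of the $\mathfrak{R}_{n,j,\ell}$ (Theorem~\ref{Pade fj}\,(ii)). Your sign bookkeeping and indexing are in fact cleaner than the paper's, and you correctly isolate the one nontrivial point, namely that the \emph{equality} $\deg P_{n,w+1}=(w+1)(n+1)$ (hence $\lambda\neq 0$) is exactly where assumption~\eqref{important assump} enters through Lemma~\ref{sufficient condition}.
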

\begin{proof}
Note that this proposition is a particular case of \cite[Proposition~$5.2$]{Kaw}. 
For the matrix in the definition of $\Delta_n(z)$, adding $-f_{j}(z)$ times first row to $j+1$ th row for each $0\le j \le w$, 
                     $$ 
                     \Delta_n(z)=(-1)^{w}{\rm{det}}
                     {\begin{pmatrix}
                     P_{n,0}(z) & \dots &P_{n,w+1}(z)\\
                     \mathfrak{R}_{n,0,0}(z) & \dots & \mathfrak{R}_{n,0,w+1}(z)\\
                     \vdots    & \ddots & \vdots  \\
                     \mathfrak{R}_{n,w,0}(z) & \dots & \mathfrak{R}_{n,w,w+1}(z)
                     \end{pmatrix}}. 
                     $$ 
We denote the $(s,t)$th cofactor of the matrix in the right hand side of above equality by $\Delta_{s,t}(z)$.
Then we have, developing along the first row 
\begin{align} \label{formal power series rep delta}
\Delta_n(z)=(-1)^{w}\left(\sum_{l=0}^{{{w}}}P_{n,\ell}(z)\Delta_{1,\ell+1}(z)\right).
\end{align} 
The property of the Pad\'{e} approximation ${\rm{ord}}_{\infty}\, {{\mathfrak{R}}}_{n,j,\ell}(z)\ge n+1$ for $0\le j \le w$, $0 \le \ell \le w+1$ implies
$$
{\rm{ord}}_{\infty}\,\Delta_{1,\ell+1}(z)\ge (n+1)(w+1) \ \ \text{for} \ \ 0\le \ell \le w.
$$
Combining equation~\eqref{deg of Pl} and above inequality yields
$$P_{n,\ell}(z)\Delta_{1,\ell+1}(z)\in (1/z)\cdot K[[1/z]] \ \ \text{for} \ \ 0\le \ell \le w,$$
and 
$$P_{n,w+1}(z)\Delta_{1,{{w+1}}}(z)\in K[[1/z]].$$
Note that in above relation, the constant term of $P_{w}(z)\Delta_{1,{{w+1}}}(z)$ is 
\begin{equation} \label{what const} 
``\text{Coefficient of} \ z^{(n+1)(w+1)} \ \text{of} \ P_{n,w+1}(z)'' \cdot ``\text{Coefficient of} \ 1/z^{(n+1)(w+1)} \ \text{of} \ \Delta_{1,{{w+1}}}(z)''.
\end{equation}  
Notice that the coefficient of $z^{(n+1)(w+1)}$ of $P_{n,w+1}(z)$ is non-zero under the assumption~\eqref{important assump} by Lemma~$\ref{sufficient condition}$.
equation $(\ref{formal power series rep delta})$ implies $\Delta_n(z)$ is a polynomial in $z$ with non-positive valuation with respect to ${\rm{ord}}_{\infty}$. Thus, it has to be a constant.
Finally, by Theorem~$\ref{Pade fj}$ $(ii)$, the coefficient of $1/z^{(n+1)(w+1)}$ of  $\Delta_{1,{{w+1}}}(z)$ is 
{\small{\begin{align*}
{\rm{det}}
\begin{pmatrix}
(-1)^n\varphi_{0}(a(t)^n) & \ldots & (-1)^n\varphi_{0}(t^{w}a(t)^n) \\
\vdots & \ddots & \vdots\\
(-1)^n\varphi_{w}(a(t)^n) & \ldots & (-1)^n\varphi_{w}(t^{w}a(t)^n) \\  
\end{pmatrix}
=(-1)^{wn}\cdot {\rm{det}}\,\mathcal{M}_n.
\end{align*}}}
Combining Equations $(\ref{formal power series rep delta})$, $(\ref{what const})$ and above equality yields the assertion.
This completes the proof of Proposition~$\ref{Delta and Theta}$.
\end{proof}

\subsection{Study of kernels of $\varphi_j$}
Let $n$ be a nonnegative integer. Denote $K[t]_{\leq n} \subset K[t]$ the $K$-vector space of polynomials of degree at most $n$.
In this subsection we consider the kernel of $\varphi_j$ and prove the following crucial lemma.
\begin{lemma} \label{kernel}
We have $$\bigcap_{j=0}^{w}{\rm{ker}}\,\varphi_{j}=L^*\cdot K[t].$$
\end{lemma}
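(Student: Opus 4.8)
The plan is to prove the two inclusions separately. The inclusion $L^*\cdot K[t]\subseteq \bigcap_{j=0}^w \ker\varphi_j$ is immediate: for each $j$ we have $L\cdot f_j(z)\in K[z]$ by construction of the $f_j$, so Proposition~\ref{inc} applied to $f=f_j$ gives $L^*\cdot K[t]\subseteq \ker\varphi_j$; intersecting over $j$ yields the claim. The substance is the reverse inclusion $\bigcap_{j=0}^w \ker\varphi_j \subseteq L^*\cdot K[t]$.

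\smallskip

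For the reverse inclusion, the first step is to understand the image $L^*\cdot K[t]$ as a graded subspace of $K[t]$. Since $L^*=\mathcal{E}_{a,b}\,a(t)$ with $\mathcal{E}_{a,b}=\tfrac{d}{dt}+b(t)/a(t)$, Lemma~\ref{sufficient condition} (with $k=0$) tells us that $\deg(L^*\cdot P)=\deg P + w+1$ for every nonzero $P$, under assumption~\eqref{important assump}. Consequently $L^*$ is injective on $K[t]$, and for each $N\ge 0$ the restriction $L^*\colon K[t]_{\le N}\to K[t]_{\le N+w+1}$ is injective with image of dimension $N+1$; hence $\dim\bigl(K[t]_{\le N+w+1}/(L^*\cdot K[t]\cap K[t]_{\le N+w+1})\bigr)=w+1$ for all $N$. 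So $L^*\cdot K[t]$ has codimension exactly $w+1$ in $K[t]$. It therefore suffices to exhibit $w+1$ independent linear functionals on $K[t]$ vanishing on $L^*\cdot K[t]$ whose common kernel is exactly $L^*\cdot K[t]$ — and the $\varphi_0,\dots,\varphi_w$ are the natural candidates. Concretely, I would argue: (a) each $\varphi_j$ vanishes on $L^*\cdot K[t]$, so $\bigcap_j\ker\varphi_j \supseteq L^*\cdot K[t]$ and the quotient map factors through $K[t]/L^*\cdot K[t]$, a $(w+1)$-dimensional space; (b) the functionals $\varphi_0,\dots,\varphi_w$, viewed on this $(w+1)$-dimensional quotient, are linearly independent; therefore their common kernel in the quotient is $0$, i.e. $\bigcap_j\ker\varphi_j = L^*\cdot K[t]$.

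\smallskip

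The heart of the matter — and the step I expect to be the main obstacle — is establishing (b), the linear independence of $\varphi_0|_{K[t]/L^*K[t]},\dots,\varphi_w|_{K[t]/L^*K[t]}$. A clean way to see this is to choose an explicit basis of the quotient represented by $1,t,\dots,t^w$ (these span $K[t]/L^*\cdot K[t]$ because the leading-degree analysis above shows every monomial $t^{N}$ with $N\ge w+1$ is congruent mod $L^*\cdot K[t]$ to a polynomial of degree $\le N-1$, so by downward induction every class has a representative of degree $\le w$; and they are independent since $L^*\cdot K[t]$ contains no nonzero polynomial of degree $\le w$, as $w+1$ is the minimal degree in the image). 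Relative to this basis, the matrix of $(\varphi_j)_{0\le j\le w}$ is precisely $\bigl(\varphi_j(t^i)\bigr)_{0\le j,i\le w}$, which is the matrix $\mathcal{M}_0$ from Subsection~\ref{Pade subsection} (the case $n=0$, where $a(t)^0=1$). Thus the required independence is equivalent to $\det\mathcal{M}_0\neq 0$. This brings us full circle: the non-vanishing of $\det\mathcal{M}_0$ is exactly the special case $n=0$ of the determinant computation that Proposition~\ref{Delta and Theta} reduces $\Delta_0(z)$ to, and which the paper announces will be settled in Subsection~\ref{finish nonvanish}.

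\smallskip

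To avoid circularity I would instead prove (b) directly, without invoking $\det\mathcal{M}_0\neq 0$: suppose $\sum_{j=0}^w \lambda_j\varphi_j$ vanishes on all of $K[t]$ for some $\lambda_j\in K$ not all zero, and set $g(z)=\sum_{j=0}^w \lambda_j f_j(z)$. Then $\varphi_g=\sum_j\lambda_j\varphi_j$ is the zero functional, which forces every coefficient $g_k$ of $g(z)=\sum_k g_k/z^{k+1}$ to vanish, i.e. $g=0$; this contradicts the $K$-linear independence of $f_0,\dots,f_w$ guaranteed by Lemma~\ref{solutions}. Hence no nontrivial combination of the $\varphi_j$ annihilates $K[t]$. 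It remains to upgrade "no $\sum\lambda_j\varphi_j$ annihilates all of $K[t]$" to "the $\varphi_j$ are independent on the quotient $K[t]/L^*\cdot K[t]$": but these are the same statement, since each $\varphi_j$ already kills $L^*\cdot K[t]$, so $\sum\lambda_j\varphi_j$ annihilates $K[t]$ iff it annihilates the quotient. Combining this with the codimension count from the second paragraph gives $\bigcap_{j=0}^w\ker\varphi_j=L^*\cdot K[t]$, completing the proof. The one genuinely delicate point to write carefully is the dimension/leading-term bookkeeping of the first obstacle — i.e. verifying via Lemma~\ref{sufficient condition} that $L^*\cdot K[t]$ meets $K[t]_{\le N}$ in a subspace of dimension exactly $N-w$ for $N\ge w+1$ — since everything downstream rests on that codimension being precisely $w+1$.
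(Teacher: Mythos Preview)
Your proof is correct and follows essentially the same approach as the paper: both obtain $L^*\cdot K[t]\subseteq\bigcap_j\ker\varphi_j$ from Proposition~\ref{inc}, use Lemma~\ref{sufficient condition} with $k=0$ to reduce the reverse inclusion to polynomials of degree $\le w$, and then finish using the $K$-linear independence of $f_0,\dots,f_w$. The only cosmetic difference is that the paper argues the last step concretely via $\det\mathcal{M}_0\neq 0$ (invoking the isomorphism~\eqref{syokiti}), whereas you phrase the same linear algebra as a codimension count together with independence of the functionals $\varphi_j$.
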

\begin{proof}
Denote the $K$-vector space $\cap_{j=0}^{w}{\rm{ker}}\,\varphi_{j}$ by $W$.
Since $L\cdot f_j(z)\in K[z]$, Proposition~$\ref{inc}$ yields $L^{*}\cdot K[t]\subseteq W$. Let us show the opposite inclusion.   
Let $P(t)\in W$. Applying Lemma~$\ref{sufficient condition}$ for $k=0$, we see that there exists a polynomial $\tilde{P}(t)\in L^{*}\cdot K[t]$ such that $P(t)-\tilde{P}(t)\in K[t]_{\le w}$.
This implies that it is sufficient to prove the following equality:
\begin{align} \label{determine ker}
W \bigcap K[t]_{\le w}=\{0\}.
\end{align} 
Put $f_j(z)=\sum_{k=0}^{\infty}f_{j,k}/z^{k+1}$. By the definition of $\varphi_j$,
$$\mathcal{M}_0=
\begin{pmatrix}
f_{0,0} &  \cdots & f_{0,w}\\
\vdots & \ddots & \vdots\\
f_{w,0} &\cdots & f_{w,w}
\end{pmatrix}\in {\rm{Mat}}_{w+1}(K).
$$
Since the Laurent series $f_j$ is determined by $(f_{j,k})_{0\le k \le w}$ (see the $K$-isomorphism in equation~\eqref{syokiti}) and the Laurent series $\{f_j(z)\}_{0\le j \le w}$ are linearly independent over $K$, we have ${\rm{det}}\,\mathcal{M}_0 \neq 0$.
For $P(t)=\sum_{k=0}^{w}p_jt^j \in W \cap K[t]_{\le w}$, we put $\boldsymbol{p}={}^t(p_0,\ldots,p_{w})$. 
By the definition of $W$, we have 
$$\mathcal{M}_0 \cdot \boldsymbol{p}=\boldsymbol{0},$$
and thus $\boldsymbol{p}=\boldsymbol{0}$. This implies $P=0$ and equation~\eqref{determine ker} holds. 
\end{proof}
\begin{remark}
The non-vanishing of the determinant of $\mathcal{M}_0$ yields the non-vanishing of $\Delta_0(z)$.
\end{remark}

\subsection{Proof of Theorem~$\ref{nonvanish Delta}$} \label{finish nonvanish}
Let $n$ be a positive integer. By Proposition~$\ref{Delta and Theta}$, it is sufficient to show the non-vanishing of ${\rm{det}}\, \mathcal{M}_n$ 
to prove Theorem~$\ref{nonvanish Delta}$.
Let $\boldsymbol{q}={}^t(q_0,\ldots,q_{w})\in K^{w}$ such that 
$$\mathcal{M}_n\cdot \boldsymbol{q}=\boldsymbol{0}.$$ 
Put $Q(t)=\sum_{k=0}^{w}q_kt^k\in K[t]$. 
Then the linearity of $\varphi_{j}$ derives $\varphi_{j}(Q(t)a(t)^n)=0$ for $0\le j \le w$ and thus, using Lemma~$\ref{kernel}$,
\begin{align} \label{belong to kernel}
Q(t)a(t)^n\in \bigcap_{j=0}^{w}{\rm{ker}}\,\varphi_{j}=L^{*}\cdot K[t].
\end{align}
We complete the proof of Theorem~$\ref{nonvanish Delta}$ by showing $Q(t)=0$.
In order to prove $Q(t)=0$, we prepare the following key lemma.
\begin{lemma} \label{nonvanish key1}
Let $a(t),b(t)\in K[t]$. Assume $Na'(t)+b(t)$ and $a(t)$ are coprime for any positive integer $N$.
Let $n$ be a positive integer and $P(t),Q(t)\in K[t]$ such that 
\[
Q(t)a(t)^n=\left(\dfrac{d}{dt}+\dfrac{b(t)}{a(t)}\right)a(t)\cdot P(t).
\]
Then $P(t)$ is divisible by $a(t)^n$. 
\end{lemma}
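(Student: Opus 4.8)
The plan is to prove the divisibility $a(t)^{n}\mid P(t)$ by induction on the exponent, using the explicit shape of the adjoint operator. First I would unwind the definition~\eqref{iota diff} of $\iota$: since $L=-a(z)\tfrac{d}{dz}+b(z)$, one has $L^{*}=b(t)+\tfrac{d}{dt}\circ a(t)$ as an operator (equivalently $L^{*}=\mathcal{E}_{a,b}\,a(t)$, as already observed above), hence
\[
L^{*}\cdot P(t)=\bigl(a'(t)+b(t)\bigr)P(t)+a(t)P'(t)\qquad\text{for all }P(t)\in K[t].
\]
Thus the hypothesis $Q(t)a(t)^{n}=L^{*}\cdot P(t)$ becomes
\[
Q(t)a(t)^{n}=\bigl(a'(t)+b(t)\bigr)P(t)+a(t)P'(t).
\]
I would then establish, by induction on $k$ for $0\le k\le n$, the claim $a(t)^{k}\mid P(t)$; the case $k=n$ is the lemma.

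The base case $k=0$ is vacuous. For the inductive step, assume $a(t)^{k}\mid P(t)$ with $k<n$ and write $P(t)=a(t)^{k}P_{k}(t)$ with $P_{k}(t)\in K[t]$. Differentiating gives $P'(t)=k\,a(t)^{k-1}a'(t)P_{k}(t)+a(t)^{k}P_{k}'(t)$, so substituting into the displayed identity and cancelling the common factor $a(t)^{k}$ from both sides yields
\[
Q(t)a(t)^{\,n-k}=\bigl((k+1)a'(t)+b(t)\bigr)P_{k}(t)+a(t)P_{k}'(t).
\]
Since $n-k\ge 1$, the left-hand side is divisible by $a(t)$; reducing modulo $a(t)$ gives $\bigl((k+1)a'(t)+b(t)\bigr)P_{k}(t)\equiv 0\pmod{a(t)}$. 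As $k+1$ is a positive integer, the standing hypothesis~\eqref{key assump2} that $(k+1)a'(t)+b(t)$ and $a(t)$ are coprime forces $a(t)\mid P_{k}(t)$, i.e.\ $a(t)^{k+1}\mid P(t)$, which closes the induction.

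I do not expect a genuine obstacle: the argument is a clean descent. The one thing to get right is the bookkeeping --- cancelling exactly $a(t)^{k}$ and using $k<n$ so that a factor of $a(t)$ survives on the left at each stage --- and, conceptually, noticing that differentiating $a(t)^{k}$ is precisely what produces the shifted polynomial $(k+1)a'(t)+b(t)$ at the $k$-th step; this is exactly why the coprimality assumption must be imposed for every positive integer $N$ rather than only for $N=1$. (Polynomial division causes no difficulty since $K$ is a field; if $a(t)$ is a non-zero constant the statement is trivial, and the degenerate case $a(t)=0$ forces $P(t)=0$ by the coprimality hypothesis.)
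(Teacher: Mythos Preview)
Your proof is correct and follows essentially the same approach as the paper's: both arguments compute $L^{*}\cdot P=(a'+b)P+aP'$ and then peel off factors of $a(t)$ one at a time, using at the $k$-th step the coprimality of $(k+1)a'(t)+b(t)$ with $a(t)$. The only cosmetic difference is that the paper inducts on $n$ (applying the rank-$n$ statement inside the rank-$(n+1)$ case) while you fix $n$ and induct on the exponent $k$; the underlying mechanism is identical.
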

\begin{proof}
Let us prove the statement by induction on $n$.
Let $n=1$. Then $$Q(t)a(t)=(a'(t)+b(t))P(t)+a(t)P'(t).$$ 
Since $a'(t)+b(t)$ is coprime with $a(t)$, the polynomial $P(t)$ is divisible by $a(t)$. 
Assume the claim holds for $n\ge1$. 
Let us consider the case $n+1$. 
The induction hypothesis yields $P(t)$ is divisible by $a(t)^n$. Put $P(t)=a(t)^n\tilde{P}(t)$.
Then $$Q(t)a(t)^{n+1}=\left((n+1)a'(t)+b(t)\right)\tilde{P}(t)a(t)^n+a(t)^{n+1}\tilde{P}'(t).$$
Combining the hypothesis of $a(t),b(t)$ and this equality yields $\tilde{P}(t)$ is divisible by $a(t)$ and thus $P(t)$ is divisible by $a(t)^{n+1}$.
\end{proof}
We now finish the proof of Theorem~$\ref{nonvanish Delta}$.
\begin{proof}[\textbf{Proof of Theorem~$\ref{nonvanish Delta}$}]
Recall $a(z),b(z)$ be polynomials defined in equation~\eqref{a,b} satisfying the assumptions \eqref{w}, \eqref{key assump2} and \eqref{important assump}.
For a vector $\boldsymbol{q}={}^t(q_0,\ldots,q_{w})$ satisfying $\mathcal{M}_n\cdot \boldsymbol{q}=\boldsymbol{0}$, we put $Q(t)=\sum_{k=0}^{w}q_kt^k$. Assume $Q(t)\neq 0$. 
From equation~\eqref{belong to kernel}, there exists a non-zero polynomial $P(t)\in K[t]$ such that $$Q(t)a(t)^n=L^{*}\cdot P(t).$$
Using the assumption~\eqref{key assump2}, by Lemma~$\ref{nonvanish key1}$, the polynomial $P(t)$ must be divisible by $a(t)^n$ and thus we have ${\rm{deg}}\,P\ge nu$. 
Using the assumption~\eqref{important assump} together with Lemma~$\ref{sufficient condition}$ for $k=0$, we have $${\rm{deg}}\, L^{*}\cdot P(t)\ge nu+w+1.$$
However, by the definition of $Q(t)$, we know $${\rm{deg}}\,Q(t)a(t)^n\le nu+w.$$ 
This leads to a contradiction, as the degrees cannot simultaneously satisfy both conditions.
Thus we conclude $Q(t)=0$, completing the proof of Theorem~$\ref{nonvanish Delta}$.
\end{proof}

\begin{remark}
Let $K$ be an algebraic number field, and let $a(z), b(z) \in K[z]$ satisfy the assumptions~\eqref{a,b}, \eqref{w} and~\eqref{assump a,b}.  
Consider the differential operator $L = -a(z)\tfrac{d}{dz} + b(z)$.
In Theorem~\ref{main}, we apply Theorem~\ref{nonvanish Delta} to the case where ${\deg}\,a = m$ and ${\deg}\,b \le m-1$, under the assumptions~\eqref{first}, \eqref{second} and~\eqref{third}, and to Laurent series $f_j(z) \in (1/z)\cdot K[[1/z]]$ that are linearly independent over~$K$ and satisfy $L \cdot f_j \in K[z]$.  
In this setting, as mentioned in Remark~\ref{fj are G functions}, the Laurent series~$f_j$ are all $G$-functions.

In~\cite{An2}, Andr\'{e} introduced the notion of \emph{arithmetic Gevrey series} as a general framework encompassing the theories of $E$-functions and $G$-functions originally developed by Siegel~\cite{Siegel1} in his study of transcendental number theory (refer \cite{An3}).  
It would be interesting to explore possible applications of Theorem~\ref{nonvanish Delta} to the study of the arithmetic properties of other classes of arithmetic Gevrey series $f(z) \in (1/z)\cdot K[[1/z]]$ satisfying $L \cdot f \in K[z]$.
\end{remark}

\section{Estimates} \label{Estimate}
Recall notation in Subsection $\ref{subsection theorem}$.
We fix a positive integer $m\ge 2$. Let $K$ be an algebraic number field and $a(z),b(z)\in K[z]$ where $a(z)$ is a monic polynomial satisfying $${\rm{deg}}\,a=m \ \ \ \text{and} \ \ {\rm{deg}}\,b\le m-1.$$
Assume that $a(z)$ decomposes over $K$, and let $\alpha_1,\ldots,\alpha_m\in K$ denote its roots, counted with multiplicity. Set
$$a(z)=\sum_{i=0}^ma_iz^i, \ \ b(z)=\sum_{j=0}^{m-1}b_jz^j.$$
Assume \eqref{first}, \eqref{second}, and \eqref{third}. 
Denote the differential operator of order $1$ $$L=-a(z)\dfrac{d}{dz}+b(z).$$ 
The assumption~\eqref{third}, together with Lemma~$\ref{solutions}$, ensures that there exist Laurent series $f_0(z),\ldots,f_{m-2}(z)\in (1/z)\cdot K[[1/z]]$ such that $f_j$ are linearly independent over $K$ and 
\begin{align} \label{property fj}
L\cdot f_j(z)\in K[z] \ \ \text{for} \ \ 0\le j \le m-2.
\end{align}
We now compute the exact form of $f_j(z)$. Recall $s_i:=b(\alpha_i)/a'(\alpha_i)$ for $1\le i \le m$ and the quantity $b_{m-1}=s_1+\ldots+s_{m}$ is not in $\Z_{<-1}$.
\begin{lemma} \label{formal solutions1}
For $0\le j \le m-2$, the formal Laurent series
\[
f_j(z):=\prod_{i=1}^m\left(1-\frac{\alpha_i}{z}\right)^{s_i}\cdot 
\frac{1}{z^{j+1}}
F^{(m)}_D\!\left(
b_{m-1}+j+1,\,
1+s_1,\ldots,1+s_m,\,
b_{m-1}+j+2;\,
\frac{\alpha_1}{z},\ldots, \frac{\alpha_m}{z}
\right)\footnote{In case of $b_{m-1}=-1$, we note that $f_0(z)=\tfrac{1}{z}\prod_{i=1}^m\left(1-\tfrac{\alpha_i}{z}\right)^{s_i}$.},
\]
are linearly independent over $K$ and satisfy relation~\eqref{property fj}.
\end{lemma}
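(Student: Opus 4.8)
The plan is to verify directly that the proposed series $f_j(z)$ lie in $(1/z)\cdot K[[1/z]]$, that they are annihilated (up to a polynomial) by $L$, and that they coincide with the basis of solutions produced by Lemma~\ref{solutions}; linear independence then comes for free. First I would expand $w(z):=\prod_{i=1}^m(1-\alpha_i/z)^{s_i}$ as a Laurent series in $1/z$: since each factor is $1 + (\text{terms in } 1/z)$, the product is a well-defined element of $1 + (1/z)\cdot K[[1/z]]$ (here one uses that the $s_i$ are rational, so the binomial series make sense over $K$, or over a finite extension, but the product is symmetric enough to stay in $K[[1/z]]$ when $a(z)\in K[z]$). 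A direct computation gives $w'(z)/w(z) = \sum_i s_i/(z-\alpha_i) = -b(z)/a(z)$ after clearing denominators — this is exactly the partial fraction identity forced by $s_i = b(\alpha_i)/a'(\alpha_i)$ and $\deg b\le m-1$, $a$ monic of degree $m$, using \eqref{first} so that the $\alpha_i$ are distinct. Hence $w(z)$ is, formally, a solution of $L\cdot w = -a w' + b w = 0$.

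Next I would identify the Lauricella factor. Write $g_j(z) = z^{-(j+1)}F_D^{(m)}(b_{m-1}+j+1, 1+s_1,\ldots,1+s_m; b_{m-1}+j+2; \alpha_1/z,\ldots,\alpha_m/z)$. The key point is the classical Euler-type integral / generating identity: $\prod_i(1-\alpha_i/z)^{s_i}$ times such an $F_D$ is, up to normalization, the Laurent expansion at $z=\infty$ of $\int_0^1 t^{b_{m-1}+j}\prod_i(1-\alpha_i t/z)^{s_i}\,\frac{dt}{\text{(something)}}$, or more usefully, that $f_j = w(z)\cdot g_j(z)$ satisfies $f_j(z) = \varphi_{f_0}$-type relations. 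Rather than chase the hypergeometric identity, the cleaner route is: show $f_j\in (1/z)\cdot K[[1/z]]$ by inspection of leading terms (the leading term is $z^{-(j+1)}$), then compute $L\cdot f_j$ directly using the product rule, $L\cdot(w g_j) = w\cdot(-a g_j') + (-a w' + b w) g_j = -a(z) w(z) g_j'(z)$ since $-aw'+bw=0$; so it remains to check that $-a(z)w(z)g_j'(z)$ is a \emph{polynomial} of degree $m-j-2$. That last check is where the precise Lauricella parameters $b_{m-1}+j+1$ and $b_{m-1}+j+2$ enter: the contiguous relation for $F_D$ in its first and last parameters collapses $a g_j'$ against $w$ to a polynomial, and one reads off the degree $m-j-2$ from the top-degree coefficient, which is nonzero precisely because $b_{m-1}+j+1\neq 0$ — guaranteed by \eqref{third} (and handled separately in the degenerate case $b_{m-1}=-1$, $j=0$, recorded in the footnote, where $g_0\equiv 1$ and $L\cdot f_0 = 0$ has degree $m-2 = m-0-2$, consistent).

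Finally, having shown $L\cdot f_j\in K[z]$ with $\deg(L\cdot f_j)=m-j-2\le m-2 = w$ (recall $w=\max\{u-2,v-1\}=m-2$ here), each $f_j$ lies in the space $\{f\in(1/z)K[[1/z]] : L\cdot f\in K[z]\}$, which by the isomorphism \eqref{syokiti} in Lemma~\ref{solutions} is $(w+1)=(m-1)$-dimensional over $K$. To conclude linear independence of $f_0,\ldots,f_{m-2}$ it suffices to observe that their leading terms are $z^{-1}, z^{-2}, \ldots, z^{-(m-1)}$ respectively — distinct orders at infinity — so any nontrivial $K$-linear combination has a well-defined (finite) order at infinity and hence is nonzero. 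This gives $m-1$ linearly independent elements in an $(m-1)$-dimensional space, matching the basis from Lemma~\ref{solutions}. The main obstacle is the middle step: verifying that $-a(z)w(z)g_j'(z)$ is genuinely a polynomial of the asserted degree rather than merely a Laurent series of bounded order — this requires either invoking the Jordan–Pochhammer/hypergeometric contiguity identity for $F_D$ precisely matched to the shift between the parameters $b_{m-1}+j+1$ and $b_{m-1}+j+2$, or, equivalently and perhaps more robustly, computing the recurrence \eqref{recurrence general} for the coefficients of $f_j$ and checking termwise that the putative series satisfies it (which reduces to a Pochhammer-symbol identity for the Lauricella coefficients). I expect the recurrence-matching approach to be the most economical and least error-prone.
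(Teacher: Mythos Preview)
Your linear-independence argument via distinct orders at infinity is correct and is exactly what the paper does. However, your ``cleaner route'' for showing $L\cdot f_j\in K[z]$ contains a genuine error: the series $w(z)=\prod_i(1-\alpha_i/z)^{s_i}$ (which the paper calls $\Phi$) is \emph{not} a solution of $L$. Logarithmic differentiation gives
\[
\frac{w'(z)}{w(z)}=\sum_{i=1}^m\frac{s_i\alpha_i}{z(z-\alpha_i)}=\sum_{i=1}^m\frac{s_i}{z-\alpha_i}-\frac{1}{z}\sum_{i=1}^m s_i=\frac{b(z)}{a(z)}-\frac{b_{m-1}}{z},
\]
so $-aw'+bw=(b_{m-1}\,a(z)/z)\,w$, which is nonzero whenever $b_{m-1}\neq0$. (The algebraic function that actually solves $L$ is $\prod_i(z-\alpha_i)^{s_i}=z^{b_{m-1}}w(z)$; cf.\ the remark just before Lemma~\ref{explicit P}.) Consequently your identity $L\cdot(wg_j)=-a\,w\,g_j'$ is false: the correct expression carries the extra term $(b_{m-1}a/z)\,w g_j$, and $-a\,w\,g_j'$ by itself is \emph{not} a polynomial --- the check you describe as the ``main obstacle'' would simply fail as stated.

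The paper's proof follows the same product-rule strategy but keeps this extra piece. The key input is the differentiation identity for $F_D$ with $\gamma=\alpha+1$,
\[
\Bigl(-z\tfrac{d}{dz}+b_{m-1}\Bigr)g_j(z)=-\frac{(b_{m-1}+j+1)\,z^{m-j-1}}{a(z)\,w(z)},
\]
which yields $g_j'=(b_{m-1}/z)\,g_j+(b_{m-1}+j+1)z^{m-j-2}/(a\,w)$. Substituting, the two $(b_{m-1}/z)$-contributions --- one from $w'/w$, one from $g_j'$ --- cancel exactly, and one is left with $L\cdot f_j=-(b_{m-1}+j+1)\,z^{m-j-2}$. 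So the architecture you propose is right, but the crucial cancellation takes place between two nonzero terms rather than via $L\cdot w=0$. Your fallback proposal of verifying the recurrence~\eqref{recurrence general} coefficientwise would work and would detect the error, but it is considerably more laborious than the direct computation the paper carries out.
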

\begin{proof}
Since ${\rm{ord}}_{\infty}\,f_j=j+1$, it is easy to see that $f_j$ are linearly independent over $K$. 
Let us show $f_j$ satisfy~\eqref{property fj}. 
Set 
\begin{align*}
\Phi(z) &= \prod_{i=1}^m\left(1-\frac{\alpha_i}{z}\right)^{s_i}, \\ 
g_j(z) &= \frac{1}{z^{j+1}} 
F^{(m)}_D\!\left(
b_{m-1}+j+1,\,
1+s_1,\ldots,1+s_m,\,
b_{m-1}+j+2;\,
\frac{\alpha_1}{z},\ldots,\frac{\alpha_m}{z}
\right).
\end{align*}
It is sufficient to prove that each $f_j(z)=\Phi(z)g_j(z)$ satisfies
\begin{equation} \label{diff eq.}
\left(\frac{d}{dz}-\frac{b(z)}{a(z)}\right)\Phi(z)g_j(z)
= \frac{(b_{m-1}+j+1)z^{m-j-2}}{a(z)}.
\end{equation}
A direct computation yields the following:
\begin{align}
\frac{d}{dz}\Phi(z) 
   &= \frac{\Phi(z)}{z}\sum_{j=1}^m\frac{\alpha_js_j}{\,z-\alpha_j}, 
   \label{d Phi}\\[6pt]
\Bigl(-z\frac{d}{dz}+b_{m-1}\Bigr) g_j(z) 
   &= -\frac{b_{m-1}+j+1}{z^{j+1}}
      \prod_{i=1}^m\left(1-\frac{\alpha_i}{z}\right)^{-1-s_i} \notag\\
   &= -\frac{(b_{m-1}+j+1)z^{m-j-1}}{a(z)\Phi(z)}. \notag
\end{align}
From the second equality we obtain
\begin{equation} \label{d g_j}
g'_j(z) = \frac{b_{m-1}}{z}\,g_j(z)
          + \frac{(b_{m-1}+j+1)z^{m-j-2}}{a(z)\Phi(z)}.
\end{equation}
Now, applying \eqref{d Phi} and \eqref{d g_j}, we compute
\begin{align*}
\left(\frac{d}{dz}-\frac{b(z)}{a(z)}\right)\Phi(z)g_j(z)
&= \Phi'(z)g_j(z) + \Phi(z)g'_j(z) - \frac{b(z)}{a(z)}\Phi(z)g_j(z)\\[4pt]
&= \left[
     \frac{1}{z}\left(\sum_{j=1}^m\frac{\alpha_js_j}{z-\alpha_j}+b_{m-1}\right)
     -\frac{b(z)}{a(z)}
   \right]\Phi(z)g_j(z)
   + \frac{(b_{m-1}+j+1)z^{m-j-2}}{a(z)}.
\end{align*}
Finally, observe the identity
\[
\frac{zb(z)}{a(z)}
   = z\sum_{j=1}^m\frac{s_j}{z-\alpha_j}
   = \sum_{j=1}^m\left(\frac{\alpha_js_j}{z-\alpha_j}+s_j\right)
   = \sum_{j=1}^m\frac{\alpha_js_j}{z-\alpha_j}+b_{m-1},
\]
which implies
\[
\frac{1}{z}\left(\sum_{j=1}^m\frac{\alpha_js_j}{z-\alpha_j}+b_{m-1}\right)
-\frac{b(z)}{a(z)}=0.
\]
Therefore, \eqref{diff eq.} holds.  
This completes the proof of Lemma~\ref{formal solutions1}.
\end{proof}
Now we fix the Laurent series $f_j(z)$ in Lemma~\ref{formal solutions1} and denote $\varphi_{f_j}$ by $\varphi_j$.
We define the polynomials
\begin{align} \label{P,Q}
P_{n,\ell}(z)=\dfrac{1}{n!}\left(\dfrac{d}{dz}+\dfrac{b(z)}{a(z)}\right)^na(z)^n\cdot z^{\ell}, \ \ \ Q_{n,j,\ell}(z)=\varphi_{j}\left(\dfrac{P_{n,\ell}(z)-P_{n,\ell}(t)}{z-t}\right) \ \ \text{for} \ \ 0\le j \le m-2.
\end{align}
By Theorem~$\ref{Pade fj} \, (i)$, the vector of polynomials $(P_{n,\ell}(z),Q_{n,j,\ell}(z))_{0\le j \le m-2}$ is a weight $n$ Pad\'{e}-type approximant of $(f_j)_j$.
Denote the Pad\'{e}-type approximations of $(f_j)_j$ by 
\begin{align} \label{main remainder}
\mathfrak{R}_{n,j,\ell}(z)=P_{n,\ell}(z)f_j(z)-Q_{n,j,\ell}(z) \ \ \text{for} \ 0\le j \le m-2.
\end{align}
A direct computation shows the algebraic function 
$$w(z)=\prod_{i=1}^m(z-\alpha_i)^{s_i}$$
is a non-zero solution of the differential operator $L$ $($\rm{refer \cite[Proposition~$3$ (i)]{F-R}}$)$.
Notice that Lemma~$\ref{decompose R}$ implies 
\begin{align} \label{express P}
P_{n,\ell}(z)=\dfrac{w(z)^{-1}}{n!} \dfrac{d^n}{dz^n}w(z)\cdot a(z)^nz^{\ell}
\end{align}
and, applying the Leibniz formula to equation~\eqref{express P} along with the expression $a(z)=\prod_{i=1}^m(z-\alpha_i)$ and the definition of $w(z)$, the following identity holds.
\begin{lemma} \label{explicit P}
We have
\begin{align*}
P_{n,\ell}(z)=\sum_{k=0}^n(-1)^k\sum_{\substack{0\le k_1,\ldots,k_m\le k \\ \sum{k_i}=k}}\sum_{\substack{0\le j_1,\ldots,j_{m+1}\le n-k \\ \sum{j_i}=n-k}}
\binom{\ell}{j_{m+1}}\left[\prod_{i=1}^m\dfrac{(-s_i)_{k_i}}{k_i!}\binom{n}{j_i}(z-\alpha_i)^{n-j_i-k_i}\right]z^{\ell-j_{m+1}},
\end{align*}
with the convention $\binom{\ell}{j_{m+1}}=0$ if $j_{m+1}>\ell$. 
\end{lemma}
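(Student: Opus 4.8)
The plan is to prove Lemma~\ref{explicit P} by a direct expansion of the expression~\eqref{express P} for $P_{n,\ell}(z)$ via the Leibniz rule, keeping careful track of the multinomial structure that arises from differentiating a product of $m+1$ factors.

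First I would recall from~\eqref{express P} that
\[
P_{n,\ell}(z)=\frac{w(z)^{-1}}{n!}\,\frac{d^n}{dz^n}\Bigl(w(z)\,a(z)^n z^{\ell}\Bigr),
\]
and substitute $w(z)=\prod_{i=1}^m(z-\alpha_i)^{s_i}$ together with $a(z)=\prod_{i=1}^m(z-\alpha_i)$. Then
\[
w(z)\,a(z)^n z^{\ell}=\Bigl(\prod_{i=1}^m(z-\alpha_i)^{n+s_i}\Bigr)z^{\ell},
\]
a product of $m+1$ factors. Applying the generalized Leibniz formula to $\tfrac{d^n}{dz^n}$ of this product introduces a sum over ordered $(m+1)$-tuples $(\ell_1,\ldots,\ell_{m+1})$ of nonnegative integers with $\ell_1+\cdots+\ell_{m+1}=n$, with the multinomial coefficient $\binom{n}{\ell_1,\ldots,\ell_{m+1}}$. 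Differentiating $(z-\alpha_i)^{n+s_i}$ exactly $\ell_i$ times yields $(n+s_i)(n+s_i-1)\cdots(n+s_i-\ell_i+1)(z-\alpha_i)^{n+s_i-\ell_i}$, and differentiating $z^{\ell}$ exactly $\ell_{m+1}$ times yields $\ell(\ell-1)\cdots(\ell-\ell_{m+1}+1)z^{\ell-\ell_{m+1}}$ (which vanishes if $\ell_{m+1}>\ell$, matching the stated convention).

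The key step is to recombine these pieces after dividing by $w(z)=\prod_{i=1}^m(z-\alpha_i)^{s_i}$: the factor $(z-\alpha_i)^{n+s_i-\ell_i}$ becomes $(z-\alpha_i)^{n-\ell_i}$, so all fractional exponents cancel and we are left with a polynomial, as expected. Then I would rewrite the descending-factorial coefficient $(n+s_i)(n+s_i-1)\cdots(n+s_i-\ell_i+1)$ in terms of Pochhammer symbols: writing $\ell_i=j_i+k_i$ with $j_i$ counting the ``integer part'' coming from the $a(z)^n$ factor and $k_i$ the part interacting with the exponent $s_i$, one checks the identity
\[
\frac{1}{n!}\binom{n}{\ell_1,\ldots,\ell_{m+1}}\prod_{i=1}^m(n+s_i)(n+s_i-1)\cdots(n+s_i-\ell_i+1)
=\binom{\ell}{j_{m+1}}\prod_{i=1}^m(-1)^{k_i}\frac{(-s_i)_{k_i}}{k_i!}\binom{n}{j_i},
\]
after reindexing; summing over $k=\sum k_i$ and over the constrained tuples $(k_i)$ and $(j_i)$ (with $\sum j_i=n-k$, $\ell_{m+1}=j_{m+1}$) produces exactly the claimed formula. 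The main obstacle — really the only nontrivial bookkeeping — is this repackaging of the binomial and falling-factorial coefficients into the product of $\binom{n}{j_i}$, $\binom{\ell}{j_{m+1}}$ and $(-s_i)_{k_i}/k_i!$ with the correct sign $(-1)^k$; once the exponent of each $(z-\alpha_i)$ is split as $n-j_i-k_i$ and the combinatorial identity above is verified termwise, the statement follows. I would carry this out by expanding $(n+s_i)(n+s_i-1)\cdots(n+s_i-\ell_i+1)$ as a convolution $\sum_{j_i+k_i=\ell_i}$ of the falling factorial of $n$ (length $j_i$, giving $j_i!\binom{n}{j_i}$) against the rising/falling factorial of $s_i$ (length $k_i$, giving $(-1)^{k_i}k_i!\,(-s_i)_{k_i}/k_i!$ up to the sign), which is the standard Vandermonde-type expansion of a falling factorial of a sum.
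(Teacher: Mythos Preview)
Your approach is correct in spirit and will go through, but it takes a genuinely different route from the paper. The paper does \emph{not} merge $w(z)$ and $a(z)^n$ into $\prod_i(z-\alpha_i)^{n+s_i}$; instead it applies the binary Leibniz rule first,
\[
\frac{d^n}{dz^n}\bigl(w(z)\cdot a(z)^nz^{\ell}\bigr)=\sum_{k=0}^n\binom{n}{k}\,\frac{d^k}{dz^k}w(z)\cdot \frac{d^{n-k}}{dz^{n-k}}\bigl(a(z)^nz^{\ell}\bigr),
\]
and then applies the multinomial Leibniz rule separately to each factor, so that the indices $k_i$ (from differentiating $w$) and $j_i$ (from differentiating $a^nz^{\ell}$) appear naturally and independently from the outset. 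No Vandermonde-type identity is needed: the split $n-j_i-k_i$ in the exponent and the products $\tfrac{(-s_i)_{k_i}}{k_i!}\binom{n}{j_i}$ fall out directly. Your route collapses the two factors, applies multinomial Leibniz once, and then has to re-expand the falling factorial $(n+s_i)_{\ell_i}^{\downarrow}$ via Chu--Vandermonde to recover the $j_i/k_i$ decomposition. That works, but it is a detour the paper avoids.

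One point to clean up: your displayed ``identity'' is not literally correct as written --- the left-hand side depends only on $(\ell_1,\ldots,\ell_{m+1})$ while the right-hand side depends on individual $j_i,k_i$, and you have also dropped the factor $\ell(\ell-1)\cdots(\ell-\ell_{m+1}+1)$ coming from $\tfrac{d^{\ell_{m+1}}}{dz^{\ell_{m+1}}}z^{\ell}$ on the left. What you need (and evidently intend, given your final paragraph) is the termwise Vandermonde expansion
\[
(n+s_i)_{\ell_i}^{\downarrow}=\sum_{j_i+k_i=\ell_i}\binom{\ell_i}{j_i}\,j_i!\binom{n}{j_i}\,(-1)^{k_i}(-s_i)_{k_i},
\]
after which the multinomial coefficient and the $1/n!$ combine with the $\binom{\ell_i}{j_i}j_i!$ and $\ell_{m+1}!$ to produce exactly $\binom{\ell}{j_{m+1}}\prod_i\tfrac{(-s_i)_{k_i}}{k_i!}\binom{n}{j_i}$. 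Once this is stated as a sum and the missing factor is restored, your argument is complete.
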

\begin{proof}
The Leibniz formula yields
\begin{align} \label{Leibniz}
\dfrac{d^n}{dz^n}\cdot w(z)a(z)^nz^{\ell}&=\sum_{k=0}^n\binom{n}{k}\dfrac{d^k}{dz^{k}}\cdot w(z)\times \dfrac{d^{n-k}}{dz^{n-k}}\cdot a(z)^nz^{\ell}.
\end{align}
From the definition of $w(z)$ and the equality $a(z)=\prod_{i=1}^m(z-\alpha_i)$, we compute
\begin{align*}
&\dfrac{d^k}{dz^{k}}\cdot w(z)=w(z)\sum_{\substack{0\le k_1,\ldots,k_m\le k \\ \sum k_i=k}}(-1)^k\dfrac{k!}{k_1!\cdots k_m!}\prod_{i=1}^m \dfrac{(-s_i)_{k_i}}{(z-\alpha_i)^{k_i}},\\
&\dfrac{d^{n-k}}{dz^{n-k}}\cdot a(z)^nz^{\ell}=(n-k)!\sum_{\substack{0\le j_1,\ldots,j_{m+1}\le n-k \\ \sum{j_i}=n-k}}\binom{\ell}{j_{m+1}}z^{\ell-j_{m+1}}\prod_{i=1}^m\binom{n}{j_i}(z-\alpha_i)^{n-j_i}.
\end{align*}
Substituting \eqref{Leibniz} into equation~\eqref{express P} together with the above equalities, we obtain the expansion of $P_{n,\ell}(z)$.
\end{proof}

\subsection{Absolute values of the Pad\'{e} approximants}
Let $v$ be a place of $K$. 
In this subsection we describe the asymptotic behavior, as $n$ tends to infinity, of the $v$-adic absolute values of the polynomials $P_{n,\ell}(z)$ and $Q_{n,j,\ell}(z)$ evaluated at a fixed $\beta\in K$.

\medskip

Now we use the following notations. 
Denote the $v$-adic Weil height of the $m$-tuple of algebraic number $\boldsymbol{\alpha}=(\alpha_1,\ldots,\alpha_m)$ by 
${\rm{H}}_v(\boldsymbol{\alpha})=\exp({\rm{h}}_v(\boldsymbol{\alpha}))$.
For a polynomial $P=\sum_{k=0}^np_kz^k \in K[z]$ and a valuation $v$ of $K$, we denote the the maximum $v$-adic absolute value of its coefficients of $P$ by 
\[
||P||_v:=\max_{0\le k \le n}\{|p_k|_v\}.
\] 
The floor function is denoted by $\lfloor \cdot \rfloor$. 
For a nonnegative integer $n$, $s\in \Q$ and $b\in \Q\setminus\Z_{\le-1}$, we denote
\begin{align*}
\mu_n(s)&={\rm{den}}(s)^n\prod_{\substack{q:\text{prime} \\ q\mid {\rm{den}}(s)}} q^{\lfloor n/q-1\rfloor},\\
d_n(b)&={\rm{den}}\left(\dfrac{1}{b+1},\ldots,\dfrac{1}{b+n+1}\right).
\end{align*}

The aim of this section is to prove the following proposition. 
\begin{proposition} \label{estimate PQ}
Let $v$ be a place of $K$ and $\beta\in K$. 

\medskip 

$(i)$ Assume that $v$ is non-Archimedean. Then
\begin{align*}
\log\max\{|P_{n,\ell}(\beta)|_v,|Q_{n,j,\ell}(\beta)|_v\}&\le n\left(\sum_{i=1}^m{\rm{h}}_v(\alpha_i)+(m-1)({\rm{h}}_v(\boldsymbol{\alpha})+{\rm{h}}_v(\beta))\right)\\
                                                                          &+m\sum_{i=1}^m\log |\mu_n(s_i)|_v^{-1}+\log|d_{(m-1)(n+1)}(b_{m-1})|_v+o(n),
\end{align*}
where $o(n)=0$ for all but finitely many non-Archimedean places $v$.

\medskip

$(ii)$ Assume that $v$ is Archimedean. Then
\begin{align*}
\log\max\{\left|P_{n,\ell}(\beta)\right|_v, \left|Q_{n,j,\ell}(\beta)\right|_v\} \le  n\left(\sum_{i=1}^m {\rm{h}}_v(\alpha_i)+(m-1)({\rm{h}}_v(\boldsymbol{\alpha})+{\rm{h}}_v(\beta))+m\log |4|_v+o(1)\right).
\end{align*}
\end{proposition}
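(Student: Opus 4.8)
\textbf{Proof strategy for Proposition~\ref{estimate PQ}.}
The plan is to estimate $P_{n,\ell}(\beta)$ directly from the explicit expansion furnished by Lemma~\ref{explicit P}, and then to bound $Q_{n,j,\ell}(\beta)$ using the relation $Q_{n,j,\ell}=\varphi_j((P_{n,\ell}(z)-P_{n,\ell}(t))/(z-t))$ together with an estimate on the coefficients of $f_j$. First I would fix $v$ and analyze the terms in Lemma~\ref{explicit P}. Each summand is a product of: a multinomial/binomial factor $\binom{\ell}{j_{m+1}}\prod_i\binom{n}{j_i}$; the rising-factorial factors $\prod_i (-s_i)_{k_i}/k_i!$; the powers $(z-\alpha_i)^{n-j_i-k_i}$ evaluated at $\beta$; and $\beta^{\ell-j_{m+1}}$. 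The key arithmetic input is that the least common denominator of the rationals $\{(-s_i)_{k_i}/k_i! : 0\le k_i\le n\}$ is exactly $\mu_n(s_i)$ — this is the classical estimate for denominators of binomial-type coefficients with rational argument, and it explains the factor $m\sum_i \log|\mu_n(s_i)|_v^{-1}$ (the $m$ appears because, as will be seen below, the analogue for $Q$ forces an extra factor, or because $w(z)^{-1}$ contributes — I would trace the exact source when writing the full proof). The factors $(\beta-\alpha_i)^{n-j_i-k_i}$ contribute at most $n\,\mathrm{h}_v$ of $\beta-\alpha_i$, which is bounded by $n({\rm h}_v(\alpha_i)+{\rm h}_v(\beta)+\log|2|_v)$ in the Archimedean case and by $n\max\{{\rm h}_v(\alpha_i),{\rm h}_v(\beta)\}$ non-Archimedeanly; summing over $i$ and regrouping yields the stated combination $\sum_i{\rm h}_v(\alpha_i)+(m-1)({\rm h}_v(\boldsymbol\alpha)+{\rm h}_v(\beta))$. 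In the Archimedean case the binomial factors $\binom{n}{j_i}$ and the number of terms in the multi-index sums (polynomially many in $n$, hence subexponential, hence $o(1)$ per unit $n$ after taking $\log$ and dividing) are absorbed into the $o(1)$ term, together with $\log|4|_v$ which dominates the accumulated $\log 2$'s; non-Archimedeanly all binomial coefficients have $v$-adic absolute value $\le 1$, so they cost nothing, and the term count is irrelevant by the ultrametric inequality, leaving the genuinely arithmetic factors $\mu_n(s_i)$ and, for $Q$, the denominator $d_{(m-1)(n+1)}(b_{m-1})$.

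For the polynomials $Q_{n,j,\ell}$, I would expand $(P_{n,\ell}(z)-P_{n,\ell}(t))/(z-t)=\sum_{r}\big(\sum_{s}p_{s}\, t^{s-1-r}\big)z^{r}$ where $p_s$ are the coefficients of $P_{n,\ell}$, so that $Q_{n,j,\ell}(\beta)=\sum_r \beta^r \sum_s p_s\, f_{j,s-1-r}$ with $f_{j,k}$ the Laurent coefficients of $f_j$. The coefficients $p_s$ are controlled by the estimate on $\|P_{n,\ell}\|_v$ just obtained (with $\deg P_{n,\ell}=\ell+n(w+1)$ by \eqref{deg of Pl}, here $w=m-2$), so the remaining task is to bound $|f_{j,k}|_v$ for $0\le k\le \ell+n(w+1)$. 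Since $f_j$ is the $G$-function $\Phi(z)g_j(z)$ of Lemma~\ref{formal solutions1}, its coefficients are rationals over $K$ whose common denominator for the first $N$ of them divides (up to the $\mu_N(s_i)$-type factors coming from the factor $\Phi(z)=\prod(1-\alpha_i/z)^{s_i}$) the quantity $d_N(b_{m-1})$ coming from the Pochhammer quotient $(b_{m-1}+j+1)_{k_1+\cdots+k_m}/(b_{m-1}+j+2)_{k_1+\cdots+k_m}$ in the Lauricella series, which telescopes to $(b_{m-1}+j+1)/(b_{m-1}+j+1+k_1+\cdots+k_m)$. With $N=(m-1)(n+1)$ (the relevant degree bound, $w+1=m-1$) this produces precisely the factor $\log|d_{(m-1)(n+1)}(b_{m-1})|_v$ in part $(i)$; in part $(ii)$ the archimedean size of these coefficients is again subexponential and absorbed into $o(1)$. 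Taking $\max$ over $r,s$ and using the ultrametric inequality (non-Archimedean) or the triangle inequality with the polynomial term count (Archimedean) gives the bound for $Q_{n,j,\ell}(\beta)$ in the same shape as for $P_{n,\ell}(\beta)$, and one takes the maximum of the two.

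\textbf{Main obstacle.} The delicate point is the bookkeeping of denominators: tracking exactly how the factors $\mu_n(s_i)$ and $d_{(m-1)(n+1)}(b_{m-1})$ arise — and in particular justifying the multiplicity $m$ in front of $\sum_i\log|\mu_n(s_i)|_v^{-1}$ rather than a single factor — requires a careful combinatorial analysis of Lemma~\ref{explicit P} combined with the standard fact that $\prod_{q\mid{\rm den}(s)}q^{\lfloor n/(q-1)\rfloor}\cdot{\rm den}(s)^n$ clears denominators of $\{(-s)_k/k!\}_{k\le n}$ (Chudnovsky-type estimate), and of the companion statement that $d_N(b)$ clears the denominators of the telescoped Pochhammer ratios. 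The archimedean estimate, by contrast, is comparatively soft: everything except the exponential-in-$n$ contributions of $(\beta-\alpha_i)^{O(n)}$ and $a(\beta)^n$ and the $4^n$-type factor (from $\prod\binom{n}{j_i}$ and $\binom{n}{k}$ in the Leibniz expansion, bounded by $2^n\cdot 2^n$) is polynomial in $n$ and disappears into $o(1)$. I would therefore structure the write-up as: (1) a lemma on denominators of $(-s)_k/k!$; (2) a lemma on denominators of $f_{j,k}$; (3) the archimedean size bound; (4) assembling $(i)$ and $(ii)$ from Lemma~\ref{explicit P} and the formula $Q_{n,j,\ell}(\beta)=\sum_r\beta^r\sum_s p_s f_{j,s-1-r}$.
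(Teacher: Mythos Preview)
Your proposal is essentially the same strategy as the paper's: bound $\|P_{n,\ell}\|_v$ from the explicit expansion of Lemma~\ref{explicit P}, bound the Laurent coefficients $f_{j,k}$ via the explicit product form $\Phi(z)g_j(z)$ and the telescoped Pochhammer ratio, and combine through the identity $Q_{n,j,\ell}(\beta)=\sum_r\beta^r\sum_s p_s f_{j,s-1-r}$. Your identification of the sources of $\mu_n(s_i)$ and $d_{(m-1)(n+1)}(b_{m-1})$ is correct, and your four-lemma outline matches the paper's sequence of auxiliary lemmas almost exactly.

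Two small points of divergence are worth noting. First, the paper does not evaluate $(\beta-\alpha_i)^{n-j_i-k_i}$ directly; it first expands each $(z-\alpha_i)^{n-j_i-k_i}$ by the binomial theorem to obtain a bound on $\|P_{n,\ell}\|_v$ involving only $\prod_i{\rm H}_v(\alpha_i)^n$, and then uses $|P(\beta)|_v\le(\text{term count})\cdot\|P\|_v\cdot{\rm H}_v(\beta)^{\deg P}$. This makes the regrouping into $\sum_i{\rm h}_v(\alpha_i)+(m-1)({\rm h}_v(\boldsymbol\alpha)+{\rm h}_v(\beta))$ transparent: the $(m-1){\rm h}_v(\beta)$ comes from $\deg P_{n,\ell}=(m-1)n+\ell$, and the $(m-1){\rm h}_v(\boldsymbol\alpha)$ comes entirely from the growth of $|f_{j,k}|_v$ for $k\le(m-1)(n+1)$ in the bound for $Q$. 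Your direct-evaluation route can be made to work but the bookkeeping is less clean.

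Second, and more substantively: in part~(ii) you say the archimedean size of the $f_{j,k}$ is ``subexponential and absorbed into $o(1)$''. This is not right --- the coefficients $f_{j,k}$ grow like ${\rm H}_v(\boldsymbol\alpha)^{k}$, and with $k$ up to $(m-1)n$ this is precisely the origin of the term $(m-1){\rm h}_v(\boldsymbol\alpha)$ in the final bound. The paper obtains $\limsup|f_k|^{1/k}\le\max_i|\alpha_i|$ by applying Perron's Second Theorem to the Poincar\'e-type recurrence \eqref{recurrence general} (whose characteristic polynomial is $a(z)$); you could equally well extract the same bound from the explicit formula for $f_{j,k}$, but you must account for it rather than absorb it.
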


\subsubsection{Proof of Proposition~\ref{estimate PQ} $(i)$}
We will use the following classical lemma to control the denominator of $(\alpha)_n/n!$ and the growth of $d_n(b)$.
\begin{lemma}\label{well-known}
Let $\alpha\in \Q$ and $b\in \Q\setminus\Z_{\le -1}$.

$(i)$ Let $n$ be a nonnegative integer. For $k=0,\dots,n$, we have
    \begin{align*}
    \mu_n(\alpha) \dfrac{(\alpha)_k}{k!}\in \Z.
    \end{align*}

\medskip

$(ii)$ Let $k$ be an integer and $n_1,n_2$ be positive integers. Then
\begin{align*}
&\mu_n(\alpha+k)=\mu_n(\alpha) \ \ \text{and} \ \ \mu_{n_1+n_2}(\alpha) \ \text{is divisible by} \ \mu_{n_1}(\alpha)\mu_{n_2}(\alpha).
\end{align*} 

\medskip

$(iii)$ We have 
\[
\limsup_{n\to \infty}\dfrac{1}{n}\log d_n(b)=\dfrac{{\rm{den}}(b)}{\varphi({\rm{den}}(b))}\sum_{\substack{1\le j \le {\rm{den}}(b_{m-1}) \\ (j,{\rm{den}}(b_{m-1}))=1}}\dfrac{1}{j},
\]
where $\varphi$ is Euler's totient function.

\medskip

$(iv)$ Let $p$ be a prime. We have 
\[
\displaystyle{\lim_{n\to \infty}} \dfrac{|d_n(b)|^{-1}_p}{n}=0.
\]
\end{lemma}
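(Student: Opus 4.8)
The plan is to treat the four parts in increasing order of difficulty, reducing everything to classical $p$-adic valuation estimates for factorials, binomial coefficients, and least common multiples. For part $(i)$, I would write $\mu_n(\alpha) = \mathrm{den}(\alpha)^n \prod_{q \mid \mathrm{den}(\alpha)} q^{\lfloor n/(q-1)\rfloor}$ and verify, prime by prime, that for each prime $q$ the $q$-adic valuation of $\mu_n(\alpha)$ dominates that of the denominator of $(\alpha)_k/k!$ for all $0 \le k \le n$; this is the standard Landau-type estimate, using that $\alpha = a/d$ with $d = \mathrm{den}(\alpha)$, that $(\alpha)_k = d^{-k}\prod_{i=0}^{k-1}(a+id)$, and Legendre's formula $v_q(k!) = \sum_{j\ge1}\lfloor k/q^j\rfloor \le (k-1)/(q-1)$ when $q \mid d$ (and $v_q(k!)$ is cancelled by the numerator product when $q \nmid d$). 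Part $(ii)$ is immediate: the shift invariance $\mu_n(\alpha+k)=\mu_n(\alpha)$ holds because $\mathrm{den}(\alpha+k)=\mathrm{den}(\alpha)$, and the divisibility $\mu_{n_1}(\alpha)\mu_{n_2}(\alpha) \mid \mu_{n_1+n_2}(\alpha)$ reduces, exponent by exponent, to the superadditivity $\lfloor n_1/(q-1)\rfloor + \lfloor n_2/(q-1)\rfloor \le \lfloor (n_1+n_2)/(q-1)\rfloor$.

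For part $(iii)$, the quantity $d_n(b) = \mathrm{den}(1/(b+1),\ldots,1/(b+n+1))$ is, up to the fixed denominator $\mathrm{den}(b)$, the least common multiple of the integers in an arithmetic progression with common difference $\mathrm{den}(b)$ and length $n+1$. The asymptotics of such generalized least common multiples are classical (a theorem going back to Bateman--Kalb--Stengle, or obtainable from the prime number theorem for arithmetic progressions): writing $b = c/D$ with $D = \mathrm{den}(b)$, one has $\frac{1}{n}\log\mathrm{lcm}\{c+D, c+2D,\ldots,c+(n+1)D\} \to \frac{D}{\varphi(D)}\sum_{1\le j\le D,\ (j,D)=1} 1/j$ as $n\to\infty$, and the factor $\mathrm{den}(b)$ contributes nothing to the limit. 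I would cite this and adapt it to our normalization. Part $(iv)$ then follows from part $(iii)$ by a routine argument: for a fixed prime $p$, the $p$-adic contribution $|d_n(b)|_p^{-1} = p^{v_p(d_n(b))}$ satisfies $v_p(d_n(b)) = O(\log n)$ (only the single highest power of $p$ in the progression contributes, and that power is at most $\log_p((c+(n+1)D)) = O(\log n)$), so $\frac{1}{n}\log|d_n(b)|_p^{-1} = O((\log n)/n) \to 0$.

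The main obstacle is part $(iii)$: pinning down the exact constant $\frac{\mathrm{den}(b)}{\varphi(\mathrm{den}(b))}\sum_{(j,D)=1} 1/j$ requires the asymptotic density of primes in each residue class modulo $D$ coprime to $D$, i.e.\ the prime number theorem for arithmetic progressions, together with a careful accounting of prime powers $p^k$ with $p \nmid D$ dividing some term of the progression (these contribute to the $\limsup$ but, by the usual $\sum_k 1/p^k$ telescoping, only reproduce the $j=p$ contributions correctly). Parts $(i)$, $(ii)$, $(iv)$ are essentially bookkeeping with Legendre's formula and the floor function and should go through without difficulty.
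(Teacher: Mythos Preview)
The paper's own proof consists entirely of citations: part $(i)$ is referred to \cite[Lemma~2.2]{B}, part $(ii)$ is said to follow directly from the definition of $\mu_n(\alpha)$, and parts $(iii)$ and $(iv)$ are referred to \cite{BKS}. Your proposal instead sketches direct arguments, and for parts $(i)$--$(iii)$ these sketches are correct and coincide with the standard proofs found in those references: the prime-by-prime Legendre estimate for $(i)$, floor subadditivity for $(ii)$, and the Bateman--Kalb--Stenger asymptotic for least common multiples in arithmetic progression for $(iii)$.

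There is, however, a genuine mismatch in your treatment of $(iv)$. What your argument actually establishes is $\tfrac{1}{n}\log|d_n(b)|_p^{-1}\to 0$ (via $v_p(d_n(b))=O(\log n)$), which is the natural $p$-adic companion to $(iii)$. But the statement as written asks for $|d_n(b)|_p^{-1}/n\to 0$, without the logarithm, and that claim is in fact false: for $b=0$ one has $d_n(0)=\mathrm{lcm}(1,\ldots,n+1)$, and along the subsequence $n=p^k-1$ one gets $|d_n(0)|_p^{-1}=p^k=n+1$, so the ratio tends to $1$, not $0$. Your bound $v_p(d_n(b))\le\log_p(|c|+(n+1)D)$ only yields $|d_n(b)|_p^{-1}=O(n)$, not $o(n)$. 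This appears to be a typo in the lemma as stated (a missing $\log$); fortunately the $O(n)$ bound you do prove is already sufficient for the only place $(iv)$ is used in the paper (the proof of Proposition~\ref{Remainder}~$(i)$), since there $|d_k|_v^{-1}$ is multiplied by a geometrically decaying factor in $k$. So your reasoning is sound; just be aware that it proves the presumably intended logarithmic version, not the literal assertion.
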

\begin{proof}
$(i)$ This property was proved in \cite[Lemma~2.2]{B}.

\medskip

$(ii)$ These properties follow directly from the definition of $\mu_n(\alpha)$.

\medskip

$(iii)$, $(iv)$ These statements were established in \cite{BKS}.
\end{proof}

\begin{lemma} \label{toy 1}
Let $v$ be a non-Archimedean place of $K$.
Let $f(z)=\sum_{k=0}^{\infty}f_k/z^{k+1}\in K[[1/z]]$ and $P(z)\in K[z]$ with ${\rm{deg}}\,P=N$.
Put $Q(z)=\varphi_f\left(\tfrac{P(z)-P(t)}{z-t}\right)$.
Then we have $$||Q||_v\le ||P||_v\cdot \max_{0\le k \le N-1}\{|f_k|_v\}.$$
\end{lemma}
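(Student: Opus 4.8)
The plan is to unwind the definition of $Q(z)=\varphi_f\bigl((P(z)-P(t))/(z-t)\bigr)$ and read off each coefficient as a finite $K$-linear combination of the $f_k$'s with $0\le k\le N-1$, then apply the ultrametric inequality. First I would write $P(z)=\sum_{i=0}^{N}p_iz^i$, so that the difference quotient is the polynomial in two variables
\[
\frac{P(z)-P(t)}{z-t}=\sum_{i=1}^{N}p_i\,\frac{z^i-t^i}{z-t}=\sum_{i=1}^{N}p_i\sum_{a+b=i-1}z^a t^b
=\sum_{b=0}^{N-1}\Bigl(\sum_{i>b}p_i z^{\,i-1-b}\Bigr)t^b .
\]
Applying $\varphi_f$, which sends $t^b\mapsto f_b$ and is $K[z]$-linear, gives $Q(z)=\sum_{b=0}^{N-1}f_b\bigl(\sum_{i>b}p_i z^{\,i-1-b}\bigr)$. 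Hence the coefficient of $z^\ell$ in $Q(z)$ is a sum of terms of the shape $p_i f_b$ with $b\le N-1$.

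Next I would estimate: for a non-Archimedean $v$, the ultrametric inequality gives $\bigl|\sum p_i f_b\bigr|_v\le\max|p_i|_v\cdot|f_b|_v\le \|P\|_v\cdot\max_{0\le k\le N-1}|f_k|_v$ for each coefficient of $Q$, and taking the maximum over $\ell$ yields exactly the claimed bound $\|Q\|_v\le\|P\|_v\cdot\max_{0\le k\le N-1}|f_k|_v$. One small point worth noting is that although $\varphi_f$ extends to $K[z][[1/z]]$, here $(P(z)-P(t))/(z-t)$ lies in $K[z,t]$, so $Q(z)$ is genuinely a polynomial and there is no convergence issue; the only $f_k$ that enter are those with index at most $\deg_t$ of the difference quotient, which is $N-1$.

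There is essentially no obstacle here; the statement is a bookkeeping consequence of the explicit formula for $\varphi_f$ on the difference quotient together with the non-Archimedean triangle inequality. The only thing to be careful about is the indexing — confirming that the highest power of $t$ appearing is $t^{N-1}$ and not $t^N$ — which follows since $(z^i-t^i)/(z-t)$ has $t$-degree $i-1\le N-1$. I would present the proof in three short lines: expand the difference quotient, apply $\varphi_f$ termwise, and invoke the ultrametric inequality coefficient by coefficient.
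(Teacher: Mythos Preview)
Your proof is correct and follows essentially the same route as the paper: expand $(P(z)-P(t))/(z-t)$ via $z^i-t^i=(z-t)\sum_{a+b=i-1}z^at^b$, apply $\varphi_f$ termwise to get each coefficient of $Q$ as a finite sum of products $p_if_b$ with $b\le N-1$, and conclude by the ultrametric inequality. The only difference is cosmetic---you group by powers of $t$ whereas the paper groups by powers of $z$---but the resulting formula and estimate are identical.
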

\begin{proof}
Set $P(z)=\sum_{i=0}^Np_iz^i$. Then, using the identity $z^i-t^i=(z-t)\sum_{k=0}^{i-1}t^{i-1-k}z^k$ for $1\le i$, we get
\begin{align} \label{expansion Q}
Q(z)&=\varphi_f\left(\sum_{k=0}^{N-1}\left(\sum_{i=k+1}^Np_kt^{i-1-k}\right)z^k\right)=\sum_{k=0}^{N-1}\left(\sum_{i=k+1}^Np_kf_{i-1-k}\right)z^k.
\end{align}
Combining above equality with the strong triangle inequality leads us to get 
\[
||Q||_v=\max_{0\le k \le N-1}\Biggl\{\left|\sum_{i=k+1}^Np_kf_{i-1-k}\right|_v\Biggr\}\le ||P||_v\cdot \max_{0\le k \le N-1}\{|f_{k}|_v\}.
\]
\end{proof}

\begin{lemma} \label{toy 2}
Let $v$ be a non-Archimedean place of $K$. Let $f(z)=\sum_{k=0}^{\infty}f_k/z^{k+1}\in K[[1/z]]$ be a Laurent series satisfying $L\cdot f(z)\in K[z]$.
For any positive integer $n$, there exists a positive integer $D$ which depends only on $f$ such that 
\begin{align} \label{denomi f}
\max_{0\le k \le n}\{|f_k|_v\}\le  |D|^{-1}_v{\rm{H}}_v(\boldsymbol{\alpha})^n\cdot \prod_{i=1}^m|\mu_n(s_i)|^{-1}_v\cdot |d_n(b_{m-1})|^{-1}_v.
\end{align}
\end{lemma}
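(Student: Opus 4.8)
The plan is to derive the bound on $\max_{0\le k\le n}\{|f_k|_v\}$ by combining the explicit shape of $f$ (given in Lemma~\ref{formal solutions1}) with the arithmetic control of its Taylor coefficients. First I would write $f(z)=\Phi(z)g(z)$ with $\Phi(z)=\prod_{i=1}^m(1-\alpha_i/z)^{s_i}$ and $g$ one of the series $z^{-j-1}F_D^{(m)}(b_{m-1}+j+1,1+s_1,\dots,1+s_m;b_{m-1}+j+2;\alpha_1/z,\dots,\alpha_m/z)$. Expanding the binomial factors gives $(1-\alpha_i/z)^{s_i}=\sum_{k_i\ge 0}\binom{s_i}{k_i}(-\alpha_i/z)^{k_i}=\sum_{k_i\ge 0}\frac{(-s_i)_{k_i}}{k_i!}(\alpha_i/z)^{k_i}$, so that the coefficient $f_k$ of $1/z^{k+1}$ is a finite sum, over exponent tuples $(k_1,\dots,k_m)$ and the Lauricella multi-index, of products of $\prod_i \frac{(-s_i)_{k_i}}{k_i!}\alpha_i^{k_i}$, the Lauricella coefficient $\frac{(b_{m-1}+j+1)_{\kappa}(1+s_1)_{l_1}\cdots(1+s_m)_{l_m}}{(b_{m-1}+j+2)_{\kappa}\, l_1!\cdots l_m!}\prod_i\alpha_i^{l_i}$ (here $\kappa=l_1+\cdots+l_m$), with the total degree constraint $\sum k_i+\sum l_i \le k$.

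Next I would control each factor $v$-adically. By Lemma~\ref{well-known}~$(i)$, $\mu_{k_i}(s_i)\frac{(-s_i)_{k_i}}{k_i!}\in\Z$, and since $\mu_{k_i}(s_i)$ divides $\mu_n(s_i)$ for $k_i\le n$ by Lemma~\ref{well-known}~$(ii)$ (using $\mu_n(-s_i)=\mu_n(s_i)$ up to shifting, and the divisibility $\mu_{n_1}\mu_{n_2}\mid\mu_{n_1+n_2}$ applied repeatedly), we get $|\frac{(-s_i)_{k_i}}{k_i!}|_v\le |\mu_n(s_i)|_v^{-1}$; the same argument handles $(1+s_i)_{l_i}/l_i!$, contributing another factor of $\mu_n(s_i)^{-1}$, for a total of $\prod_i|\mu_n(s_i)|_v^{-2}$ — but the factors $(-s_i)_{k_i}/k_i!$ and $(1+s_i)_{l_i}/l_i!$ can be merged, since $\mathrm{den}(1+s_i)=\mathrm{den}(s_i)$ and $k_i+l_i\le n$, so a single $\prod_i|\mu_n(s_i)|_v^{-1}$ suffices; then a closer bookkeeping (splitting the budget $n$ between the two indices) gives the stated single power of $\mu_n(s_i)$ per $i$. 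For the ratio $(b_{m-1}+j+1)_\kappa/(b_{m-1}+j+2)_\kappa=\frac{b_{m-1}+j+1}{b_{m-1}+j+\kappa+1}$, the denominator divides $d_n(b_{m-1})$ by definition of $d_n$, so its $v$-adic absolute value is at most $|d_n(b_{m-1})|_v^{-1}$. The monomials in the $\alpha_i$ contribute at most $\mathrm{H}_v(\boldsymbol\alpha)^n$, since the total $\alpha$-degree is $\le n$ at a non-Archimedean place. Finally the number of terms in the finite sum is bounded by a polynomial in $n$, but at a non-Archimedean place the strong triangle inequality makes this count irrelevant: $|f_k|_v$ is bounded by the max of the individual terms. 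Absorbing the finitely many "bad" contributions (primes dividing denominators of the $\alpha_i$, the coefficients $b_j$, or $n!$-type factors coming from converting $F_D$ coefficients, all of which are harmless at almost all $v$) into a single integer $D$ depending only on $f$ gives the claimed inequality.

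The main obstacle I anticipate is the careful combinatorial bookkeeping of how the "degree budget" $k\le n$ is partitioned among the binomial exponents $k_i$ and the Lauricella indices $l_i$, so that one ends up with exactly one factor $\mu_n(s_i)^{-1}$ per $i$ rather than two: this requires using the subadditivity/divisibility of $\mu_\bullet(s_i)$ in Lemma~\ref{well-known}~$(ii)$ in the form $\mu_{k_i}(s_i)\mu_{l_i}(1+s_i)\mid \mu_{k_i+l_i}(s_i)\mid\mu_n(s_i)$, together with $\mathrm{den}(1+s_i)=\mathrm{den}(s_i)$. A secondary technical point is making explicit that the integer $D$ can be chosen uniformly in $n$ — this follows because $D$ only needs to clear the fixed denominators of $a,b$ and the $\alpha_i$ (not anything growing with $n$), the growth in $n$ being entirely captured by the $\mu_n(s_i)$ and $d_n(b_{m-1})$ terms — and noting that for $v\nmid D$ and $v$ finite the bound holds with no extra factor, which is what "$o(n)=0$ for almost all finite $v$" will mean when this lemma is fed into the proof of Proposition~\ref{estimate PQ}~$(i)$.
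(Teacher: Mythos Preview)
Your proposal is correct and follows essentially the same route as the paper: expand $f_j=\Phi\cdot g_j$, write $f_{j,k}$ explicitly as a finite sum of products $\prod_i\frac{(-s_i)_{k_i}}{k_i!}\alpha_i^{k_i}\cdot\prod_i\frac{(1+s_i)_{l_i}}{l_i!}\alpha_i^{l_i}\cdot\frac{b_{m-1}+j+1}{b_{m-1}+j+\kappa+1}$, then apply Lemma~\ref{well-known}~$(i)$--$(ii)$ and the strong triangle inequality; your anticipated ``obstacle'' about merging $\mu_{k_i}(s_i)\mu_{l_i}(1+s_i)\mid\mu_n(s_i)$ is exactly the point, and the paper handles it the same way. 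One small omission: the lemma is stated for an arbitrary $f$ with $L\cdot f\in K[z]$, not just for the specific $f_j$ of Lemma~\ref{formal solutions1}, so you should begin (as the paper does) by invoking the isomorphism~\eqref{syokiti} to write $f$ as a $K$-linear combination of the $f_j$, with the combination coefficients absorbed into $D$; after that reduction your argument goes through verbatim, and the paper's $D$ is simply $|b_{m-1}|_v^{-1}$ times those coefficients.
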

\begin{proof}
Put 
\[
f_j(z)=\sum_{k=0}^{\infty}\frac{f_{j,k}}{z^{k+1}}.
\]
Since the Laurent series $f(z)$ can be expressed as a $K$-linear combination of $f_j(z)$ for $0\le j \le m-2$ (cf. isomorphism~\eqref{syokiti}), it suffices to prove that
\begin{align} \label{denomi fj}
\max_{0\le k \le n}\{|f_{j,k}|_v\}\le  |b_{m-1}|^{-1}_v{\rm{H}}_v(\boldsymbol{\alpha})^n\cdot \prod_{i=1}^m|\mu_n(s_i)|^{-1}_v\cdot |d_n(b_{m-1})|^{-1}_v.
\end{align}
We have the following expansions:
\begin{align*}
&\prod_{i=1}^m\left(1-\frac{\alpha_i}{z}\right)^{s_i}
=\sum_{k=0}^{\infty}\left[\sum_{\substack{0\le k_i \\ \sum k_i=k}}
\prod_{i=1}^m \frac{(-s_i)_{k_i}}{k_i!}\alpha_i^{k_i}\right]\frac{1}{z^k},\\
&F^{(m)}_D\!\left(
b_{m-1}+j+1,\,
1+s_1,\ldots,1+s_m;\,
b_{m-1}+j+2;\,
\frac{\alpha_1}{z},\ldots,\frac{\alpha_m}{z}
\right)\\
&=\sum_{k=0}^{\infty}\left[\sum_{\substack{0\le k_i \\ \sum k_i=k}}
\prod_{i=1}^m \frac{(1+s_i)_{k_i}}{k_i!}\alpha_i^{k_i}\right]
\frac{b_{m-1}+j+1}{b_{m-1}+j+k+1}\frac{1}{z^k}.
\end{align*}

From these we conclude that, for $0\le j \le m-2$ and $k\ge 0$,
\begin{align} \label{fjk}
f_{j,k+j}
=\sum_{w=0}^k
\left[\sum_{\substack{0\le k_i \le w \\ \sum k_i=w}}
\prod_{i=1}^m \frac{(-s_i)_{k_i}}{k_i!}\alpha_i^{k_i}\right]
\cdot 
\left[\sum_{\substack{0\le l_i \le k-w \\ \sum l_i=k-w}}
\prod_{i=1}^m \frac{(1+s_i)_{l_i}}{l_i!}\alpha_i^{l_i}\right]
\cdot 
\frac{b_{m-1}+j+1}{b_{m-1}+j+k-w+1}.
\end{align}

Now let $0\le w \le k \le n$.  
For any choice of positive integers $k_i,l_i$ with $\sum k_i=w$ and $\sum l_i=k-w$, Lemma~\ref{well-known}\, $(i)$ and $(ii)$ implies
\[
\left|\prod_{i=1}^m  \frac{(-s_i)_{k_i}}{k_i!}\frac{(1+s_i)_{l_i}}{l_i!}\alpha_i^{k_i+l_i}\right|_v\le \prod_{i=1}^m|\mu_n(s_i)|^{-1}_v\, {\rm{H}}_v(\boldsymbol{\alpha})^n.
\]
Therefore, by \eqref{fjk} and the strong triangle inequality, we get
\begin{align*}
\max_{0\le k \le n} \{|f_{j,k}|_v\}&\le |b_{m-1}|_v\prod_{i=1}^m|\mu_n(s_i)|_v\, {\rm{H}}_v(\boldsymbol{\alpha})^n\cdot \max_{0\le k \le n} \Biggl\{\left|\frac{1}{b_{m-1}+k+1}\right|_v\Biggr\}\\
                                           &=  |b_{m-1}|_v\prod_{i=1}^m|\mu_n(s_i)|^{-1}_v\, {\rm{H}}_v(\boldsymbol{\alpha})^n\cdot |d_n(b_{m-1})|^{-1}_v.
\end{align*}
This completes the proof of the assertion.
\end{proof}

\begin{lemma} \label{toy 3}
Let  $v$ be a non-Archimedean place and $n$ be a positive integer. Then
\[
\log \max_{0\le \ell \le m}\{||P_{n,\ell}||_v\}\le \sum_{i=1}^m\left(\log |\mu_n(s_i)|_v^{-1}+n{\rm{h}}_v(\alpha_i)\right).
\]
\end{lemma}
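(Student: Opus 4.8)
The plan is to bound $\|P_{n,\ell}\|_v$ directly from the explicit expansion of $P_{n,\ell}(z)$ provided by Lemma~\ref{explicit P}. Fix $n\ge 1$ and $0\le \ell \le m$. In that formula $P_{n,\ell}(z)$ is a sum over indices $k$, over compositions $(k_1,\dots,k_m)$ of $k$, and over compositions $(j_1,\dots,j_{m+1})$ of $n-k$, of the terms
\[
(-1)^k\binom{\ell}{j_{m+1}}\prod_{i=1}^m\frac{(-s_i)_{k_i}}{k_i!}\binom{n}{j_i}(z-\alpha_i)^{n-j_i-k_i}\,z^{\ell-j_{m+1}}.
\]
First I would expand each factor $(z-\alpha_i)^{n-j_i-k_i}$ by the binomial theorem and collect powers of $z$, so that each coefficient of $P_{n,\ell}$ is a $\Z$-linear combination (the binomial coefficients $\binom{n}{j_i}$, $\binom{\ell}{j_{m+1}}$ and $\binom{n-j_i-k_i}{\cdot}$ being integers) of products $\prod_{i=1}^m \frac{(-s_i)_{k_i}}{k_i!}\,\alpha_i^{e_i}$ with $0\le e_i \le n$. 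The point of passing to the $v$-adic estimate at a non-Archimedean place is that the strong triangle inequality kills all the integer multiplicities: $|\text{integer}|_v\le 1$, so
\[
\|P_{n,\ell}\|_v \le \max\Bigl\{\Bigl|\prod_{i=1}^m \tfrac{(-s_i)_{k_i}}{k_i!}\,\alpha_i^{e_i}\Bigr|_v : \sum k_i \le n,\ 0\le e_i\le n\Bigr\}.
\]

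Next I would invoke Lemma~\ref{well-known}\,$(i)$ and $(ii)$ exactly as in the proof of Lemma~\ref{toy 2}: since $\mu_n(s_i)\,\frac{(-s_i)_{k_i}}{k_i!}\in\Z$ whenever $k_i\le n$, we have $\bigl|\frac{(-s_i)_{k_i}}{k_i!}\bigr|_v \le |\mu_n(s_i)|_v^{-1}$; and $|\alpha_i^{e_i}|_v \le \max\{1,|\alpha_i|_v\}^n = {\rm H}_v(\alpha_i)^n$ since $0\le e_i\le n$ (here using that $v$ non-Archimedean makes $|\alpha_i|_v\le {\rm H}_v(\alpha_i)$). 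Multiplying these bounds over $1\le i\le m$ gives
\[
\|P_{n,\ell}\|_v \le \prod_{i=1}^m |\mu_n(s_i)|_v^{-1}\,{\rm H}_v(\alpha_i)^n,
\]
and taking logarithms together with ${\rm h}_v(\alpha_i)=\log{\rm H}_v(\alpha_i)$ yields the claimed inequality, uniformly in $\ell$, so the maximum over $0\le\ell\le m$ is harmless.

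I do not expect a serious obstacle here; the only points requiring a little care are bookkeeping ones. One must make sure that after expanding $(z-\alpha_i)^{n-j_i-k_i}$ the exponents of the $\alpha_i$ appearing really do stay in the range $[0,n]$ (they do, since $n-j_i-k_i\le n$), so that the single parameter $n$ in $\mu_n(s_i)$ and ${\rm H}_v(\alpha_i)^n$ suffices; and one must note that the factor $z^{\ell-j_{m+1}}$ and the convention $\binom{\ell}{j_{m+1}}=0$ for $j_{m+1}>\ell$ only affect which powers of $z$ occur, not the size of the coefficients. Everything else is the strong triangle inequality applied to a finite sum whose individual terms are bounded as above, so the estimate follows without any delicate cancellation argument.
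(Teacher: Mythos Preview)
Your proposal is correct and follows essentially the same route as the paper: expand $P_{n,\ell}$ via Lemma~\ref{explicit P}, open up each $(z-\alpha_i)^{n-j_i-k_i}$ with the binomial theorem, and then apply the strong triangle inequality together with Lemma~\ref{well-known}\,$(i)$ to bound each term by $\prod_{i=1}^m |\mu_n(s_i)|_v^{-1}{\rm H}_v(\alpha_i)^n$. The paper's own proof is the same argument, stated more tersely.
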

\begin{proof}
Put the equality
$$(z-\alpha_i)^{n-j_i-k_i}=\sum_{w_i=0}^{n-j_i-k_i}\binom{n-j_i-k_i}{w_i}(-\alpha_i)^{n-j_i-k_i-w_i}z^{w_i} \ \ \text{for} \ 0\le k_i\le k\le n, \ 0\le j_i \le n-k_i,$$ 
into the expression of $P_{n,\ell}$ obtained in Lemma~$\ref{explicit P}$. We then have
{\small{
\begin{align} \label{Pnl expand}
P_{n,\ell}(z)=\sum_{k}(-1)^k \sum_{k_i} \sum_{j_i}\binom{\ell}{j_{m+1}}\left[\prod_{i=1}^m \dfrac{(-s_i)_{k_i}}{k_i!}\binom{n}{j_i}\sum_{w_i}\binom{n-j_i-k_i}{w_i}(-\alpha_i)^{n-j_i-k_i-w_i}z^{w_i}\right]z^{\ell-j_{m+1}}.
\end{align}
}}
Making use of Lemma~\ref{well-known} $(i)$ together with the strong triangle inequality for the above identity, we obtain the desire estimate.
\end{proof}

\begin{proof}[\textbf{Proof of Proposition~\ref{estimate PQ} $(i)$}]
We apply Lemma~\ref{toy 1} for $P=P_{n,\ell}$. Since ${\rm{deg}}\,P_{n,\ell}=\ell+(m-1)n$, using Lemma~\ref{toy 2}, the value $\log\max_{j,\ell}\{||Q_{n,j,\ell}||_v\}$ is bounded by
\[
||P_{n,\ell}||_v+(m-1)n{\rm{h}}_v(\boldsymbol{\alpha})+\sum_{i=1}^m\log|\mu_{(m-1)(n+1)}(s_i)|^{-1}_v+\log|d_{(m-1)(n+1)}(b_{m-1})|_v+|b_{m-1}|_v.
\]
Finally, Lemma~\ref{toy 3} together with the strong triangle inequality yields
\begin{align*}
&\log\max\{|P_{n,\ell}(\beta)|_v,|Q_{n,j,\ell}(\beta)|_v\}\le\\ 
&\sum_{i=1}^m\left(m\log |\mu_n(s_i)|_v^{-1}+n{\rm{h}}_v(\alpha_i)\right)+(m-1)n({\rm{h}}_v(\boldsymbol{\alpha})+{\rm{h}}_v(\beta))+\log|d_{(m-1)(n+1)}(b_{m-1})|_v+o(n),
\end{align*}
where $o(n)=0$ if $v$ does not divide ${\rm{den}}(b_m-1,s_1,\ldots,s_m)$.
This completes the proof of Proposition~\ref{estimate PQ} $(i)$.
\end{proof}

\subsubsection{Poincar\'{e}-Perron type recurrence}
We now turn attention to the proof of Proposition~\ref{estimate PQ} $(ii)$. To this end, in this subsection, let us consider the following Poincar\'{e}-type recurrence of some order $m> 0$.
\begin{align}
    \label{eq: poincare generic}
    a_m(n) u(n+m)+ a_{m-1}(n) u(n+m-1) + \cdots + a_0(n) u(n) = 0
\end{align}
for large enough $n$, where the coefficients $a_i(t)\in\C[t]$ are polynomials and $a_m(t)\neq 0$.
Then, we can apply Perron's Second Theorem below (see \cite{Perron} and \cite[Theorem~C]{Pituk}) to estimate precisely the growth of a solution of the above recurrence. 
\begin{theorem}[Perron's Second Theorem]
    \label{thm: perron second}
    Let $m$ be a positive integer. Assume that for $i=0,\dots,m$ there exist a function $a_i:\Z_{\geq 0}\rightarrow \C$ and $a_i\in\C$ such that
    \begin{align*}
        \lim_{n\to\infty} a_i(n) = a_i \in\C,
    \end{align*}
    with $a_m\neq 0$. Denote by $\lambda_1,\dots,\lambda_m$ the (not necessarily distinct) roots of the characteristic polynomial
    \begin{align*}
        \chi(z) = a_mz^m + a_{m-1}z^{m-1} + \cdots + a_0.
    \end{align*}
    Then, there exist $m$ linearly independent solutions $u_1,\dots,u_m$ of \eqref{eq: poincare generic}, such that, for each $i=1,\dots,m$,
    \begin{align*}
        \limsup_{n\to\infty} |u_i(n)|^{1/n} = |\lambda_i|.
    \end{align*}
    In particular, any solution $u$ of \eqref{eq: poincare generic} satisfies $\limsup_{n\to\infty} |u(n)|^{1/n} \leq \max_{1\leq i\leq m} \{|\lambda_i|\}$.
\end{theorem}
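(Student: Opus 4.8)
A self-contained proof of Theorem~\ref{thm: perron second} is classical (Perron \cite{Perron}; see \cite[Theorem~C]{Pituk} for a modern treatment), so I only indicate the plan. The strategy is to pass to a first-order system, extract the elementary exponential upper bound by a norm estimate, and then build a complete flag of the solution space whose successive quotients realise the moduli $|\lambda_1|,\dots,|\lambda_m|$, the supply of solutions at each level being produced by a contraction (perturbed variation-of-constants) argument. First I would vectorise: setting $x(n)=(u(n),u(n+1),\dots,u(n+m-1))^{T}$ turns \eqref{eq: poincare generic} into $x(n+1)=A(n)x(n)$ for large $n$, where $A(n)$ is the companion matrix with last row $(-a_0(n)/a_m(n),\dots,-a_{m-1}(n)/a_m(n))$. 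Since $a_m(n)\to a_m\neq 0$, $A(n)$ tends to the companion matrix $A$ of $\chi(z)/a_m$, whose eigenvalues are $\lambda_1,\dots,\lambda_m$ with multiplicity; as $u\mapsto x$ is a linear isomorphism of solution spaces and $(n+j)/n\to 1$, one has $\limsup_n|u(n)|^{1/n}=\limsup_n\|x(n)\|^{1/n}$ for every norm, so it suffices to treat the system.

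Next comes the easy upper bound. Write $\mu=\max_i|\lambda_i|$, the spectral radius of $A$. Given $\varepsilon>0$, choose a norm on $\C^m$ whose operator norm satisfies $\|A\|<\mu+\varepsilon$; then eventually $\|A(n)\|\le\mu+2\varepsilon$, so $\|x(n)\|\le(\mu+2\varepsilon)^{\,n-N}\|x(N)\|$ and hence $\limsup_n\|x(n)\|^{1/n}\le\mu+2\varepsilon$. Letting $\varepsilon\to 0$ proves the ``in particular'' clause and shows every solution has a well-defined exponent $\tau(u):=\limsup_n|u(n)|^{1/n}\le\mu$. For $\rho\ge 0$ I set $W_\rho=\{u\ \text{solution}:\tau(u)\le\rho\}$; the inequality $\limsup_n|u(n)+v(n)|^{1/n}\le\max\{\tau(u),\tau(v)\}$ together with $\tau(cu)=\tau(u)$ makes $W_\rho$ a $\C$-subspace, nondecreasing in $\rho$, equal to $0$ for $\rho$ below $\min_i|\lambda_i|$ and to the whole ($m$-dimensional) solution space for $\rho\ge\mu$.

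The heart of the matter is then the identity $\dim_{\C}W_\rho=\#\{i:|\lambda_i|\le\rho\}$ for every $\rho\ge 0$. Granting it, $W_\rho$ jumps only at the distinct moduli $\rho_1>\dots>\rho_s$, by $m_k:=\#\{i:|\lambda_i|=\rho_k\}$ at $\rho_k$, so every nonzero solution has $\tau(u)\in\{\rho_1,\dots,\rho_s\}$, and a basis of the solution space adapted to the flag $0\subsetneq W_{\rho_s}\subsetneq\dots\subsetneq W_{\rho_1}$ furnishes, at stage $k$, exactly $m_k$ independent solutions with $\tau=\rho_k$; relabelling these $m$ solutions as $u_1,\dots,u_m$ so that $\tau(u_i)=|\lambda_i|$ finishes the proof. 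To establish the identity (say at $\rho=\rho_k$; after removing the eigenvalue $0$, which just lowers $m$, I may assume the companion matrices are eventually invertible and solutions live on a tail where they are pinned down by any window of $m$ consecutive values), I would split $\C^m=E_-\oplus E_+$ into the generalised eigenspaces of $A$ for $|\lambda_i|\le\rho_k$ and for $|\lambda_i|>\rho_k$, with spectral projections $P_\pm$. On a tail $n\ge N$ with $\delta:=\sup_{n\ge N}\|A(n)-A\|$ small and for a weight $r$ with $\rho_k<r<\rho_{k-1}$ (with $\rho_0:=\infty$), the solutions $x$ of the recurrence satisfying $\sup_n\|x(n)\|r^{-n}<\infty$ are precisely the fixed points of the perturbed variation-of-constants operator
\[
(Tx)(n)=A^{\,n-N}\xi+\sum_{\ell=N}^{n-1}A^{\,n-1-\ell}P_-\bigl(A(\ell)-A\bigr)x(\ell)-\sum_{\ell=n}^{\infty}A^{\,n-1-\ell}P_+\bigl(A(\ell)-A\bigr)x(\ell),\qquad\xi\in E_-,
\]
and for $N$ large $T$ is an affine contraction on the weighted space (contraction constant $O(\delta)$), so these solutions form a subspace $\mathcal{S}_r$ isomorphic to $E_-$ via $x\mapsto P_-x(N)$; in particular $\dim\mathcal{S}_r=\#\{i:|\lambda_i|\le\rho_k\}$. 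Since $\tau(u)\le\rho_k<r$ forces $\|u(n)\|=O(r^n)$, we get $W_{\rho_k}\subseteq\mathcal{S}_r$, hence ``$\le$''; and a decreasing-chain argument shows $\mathcal{S}_r$ does not depend on $r\in(\rho_k,\rho_{k-1})$, so every $s\in\mathcal{S}_r$ satisfies $\|s(n)\|=O(r^n)$ for all $r>\rho_k$, i.e.\ $\tau(s)\le\rho_k$, i.e.\ $\mathcal{S}_r\subseteq W_{\rho_k}$, hence ``$\ge$''.

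The hard part, and the only genuinely delicate point, is the behaviour on the critical circle $|z|=\rho_k$: the contraction by itself only produces solutions with $\tau(u)\le\rho_k+\varepsilon$ for each fixed weight $r=\rho_k+\varepsilon$, and upgrading this to $\tau(u)\le\rho_k$ is exactly what the $r$-independence of $\mathcal{S}_r$ — a consequence of the uniqueness clause of the fixed point combined with letting $N\to\infty$ as $r\downarrow\rho_k$ — is designed to achieve. Everything else (the norm estimate, the flag combinatorics, the routine bounds making $T$ a contraction) is standard perturbation theory of linear difference systems.
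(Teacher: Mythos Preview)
The paper does not give its own proof of Theorem~\ref{thm: perron second}: it is stated as a classical result with references to Perron~\cite{Perron} and Pituk~\cite[Theorem~C]{Pituk}, and is then applied as a black box in Corollary~\ref{estimate fk}. So there is no ``paper's proof'' to compare against; your proposal is in effect a sketch of the proof one finds in those references.

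As a sketch of the classical argument, your plan is sound and follows the standard route: pass to the companion system $x(n+1)=A(n)x(n)$ with $A(n)\to A$, get the global upper bound by a norm adapted to the spectral radius, stratify the solution space by the exponent $\tau(u)=\limsup_n|u(n)|^{1/n}$, and identify the jumps of the resulting flag with the eigenvalue moduli via a dichotomy/variation-of-constants contraction on weighted sequence spaces. This is precisely the strategy in~\cite{Pituk}. Two small comments. First, your remark about ``removing the eigenvalue $0$, which just lowers $m$'' is imprecise: what you actually need is only that $A$ restricted to $E_+$ is invertible (automatic, since the eigenvalues there have modulus $>\rho_k\ge 0$) and that the forward recurrence has an $m$-dimensional solution space on a tail (automatic once $a_m(n)\neq 0$); no reduction of $m$ is needed. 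Second, the genuinely delicate step you flag---that the space $\mathcal{S}_r$ is independent of $r\in(\rho_k,\rho_{k-1})$, hence its elements satisfy $\tau\le\rho_k$ exactly---is correctly identified; the clean way to carry it out is to observe that the fixed point of your operator $T$ in the $r$-weighted space, being a solution with $P_-x(N)=\xi$, coincides (by uniqueness of the contraction fixed point) with the one produced in the $r'$-weighted space for any $r'\in(\rho_k,r)$, once $N$ is taken large enough for both weights. With those clarifications your outline would fill in to a complete proof along the lines of~\cite{Pituk}.
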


\begin{remark}
    In the above theorem, there are no restriction on the roots of $\chi(z)$, whereas in Poincar\'{e}'s Theorem and Perron's First Theorem, we ask that
    \begin{align}
        \label{eq: condition roots}
        |\lambda_i|\neq |\lambda_j| \textrm{ for } i\neq j,
    \end{align}
    see \cite[Theorem~A and~B]{Pituk}. 
\end{remark}
\begin{corollary} \label{estimate fk}
Let us fix an embedding $K$ into $\C$.
Let $f(z)=\sum_{k=0}^{\infty}f_k/z^{k+1}\in (1/z)\cdot \C[[1/z]]$ be a Laurent series satisfying $L \cdot f(z)\in \C[z]$. Then we have 
$$\limsup_{n\to \infty}|f_n|^{1/n}\le \max_{1\le i \le m}\{|\alpha_i|\}.$$  
\end{corollary}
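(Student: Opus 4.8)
The plan is to reduce Corollary~\ref{estimate fk} directly to Perron's Second Theorem (Theorem~\ref{thm: perron second}). First I would recall from the proof of Lemma~\ref{solutions} that a Laurent series $f(z)=\sum_{k\geq0}f_k/z^{k+1}$ with $L\cdot f(z)\in\C[z]$ has coefficients $(f_k)_{k\geq0}$ that eventually satisfy the linear recurrence \eqref{recurrence general}, namely
\[
\sum_{i=0}^{u}a_i(k+i)f_{k+i-1}+\sum_{j=0}^{v}b_jf_{k+j}=0\qquad\text{for all }k\text{ large enough},
\]
where here $u=\deg a=m$ and $v=\deg b\le m-1$. After shifting the index, this is a Poincar\'{e}-type recurrence of the form \eqref{eq: poincare generic} of order $m$: the leading coefficient of $f$ in the recurrence is (up to index shift) $a_m(k+m)+\,(\text{lower-order-in-}k\text{ terms from }b)$, which is a polynomial in $k$ with leading term $a_m k$, hence nonzero for $k$ large; the coefficient multiplying $f_{k+i-1}$ for the highest shift is $a_m(k+m)$, a polynomial in $k$ that is nonzero for all large $k$. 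So I would rewrite the recurrence as $\sum_{i=0}^{m}A_i(k)\,f_{k+i}=0$ with $A_i(k)\in\C[k]$ and $A_m(k)\neq0$ for $k\gg0$.

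Next I would identify the limits $A_i := \lim_{k\to\infty} A_i(k)/A_m(k)$, or rather normalize so that Perron's hypotheses apply: since $a(z)$ is monic of degree $m$, the dominant balance as $k\to\infty$ comes entirely from the term $a(z)\tfrac{d}{dz}$, and the characteristic polynomial of the limiting constant-coefficient recurrence is, up to a nonzero scalar, $\chi(z)=\prod_{i=1}^m(z-\alpha_i)$. This is the crucial point: the roots $\alpha_i$ of $a(z)$ are exactly the characteristic roots governing the asymptotics, because the $b(z)$ term has degree $\le m-1=u-1$ and the differentiation supplies the extra factor of $k$ that makes the $a$-term dominate coefficient-wise. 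I would verify this by writing $a(z)=\sum a_iz^i$ and tracking that the coefficient of $f_{k+i}$ in the recurrence, after dividing by $k$, tends to $a_{i+1}$ (indices interpreted with $a_{m}=1$, $a_{-1}=0$), so the characteristic polynomial is $\sum_{i}a_{i+1}z^{i}=a(z)/1$ up to the shift, whose roots are the $\alpha_i$.

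With the recurrence in the right form and the characteristic roots identified as $\alpha_1,\dots,\alpha_m$, Perron's Second Theorem applies verbatim: any solution $u$ of \eqref{eq: poincare generic} — in particular the sequence $(f_k)_{k\ge0}$, which satisfies the recurrence for all large $k$ — obeys $\limsup_{k\to\infty}|u(k)|^{1/k}\le\max_{1\le i\le m}\{|\lambda_i|\}=\max_{1\le i\le m}\{|\alpha_i|\}$. This gives exactly $\limsup_{n\to\infty}|f_n|^{1/n}\le\max_{1\le i\le m}\{|\alpha_i|\}$, as claimed.

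The main obstacle is the bookkeeping to get the recurrence into precisely the normalized shape required by Theorem~\ref{thm: perron second} — in particular confirming that the leading coefficient $A_m(k)$ is nonzero for all sufficiently large $k$ (immediate since it is $a_m(k+m)=k+m+O(1)$ when $u-1>v$, and one uses assumption~\eqref{assump a,b} when $u-1=v$, though here $u=m$ and $v\le m-1$ so the generic case suffices) and that the normalized coefficients $A_i(k)/A_m(k)$ converge, with the limiting characteristic polynomial being a nonzero multiple of $a(z)$. None of this is deep, but it must be done carefully; once it is in place, the conclusion is an immediate citation of Perron's Second Theorem. A minor additional remark: Perron's theorem is stated for recurrences whose coefficients converge to constants, so one should divide through by the polynomial $A_m(k)$ (legitimate for $k\gg0$) before applying it, and note that the finitely many initial terms where the recurrence may fail do not affect the $\limsup$.
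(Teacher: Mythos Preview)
Your proposal is correct and follows essentially the same route as the paper's own proof: identify the recurrence~\eqref{recurrence general} satisfied by the coefficients $(f_k)$, observe that after normalization it is of Poincar\'{e} type with characteristic polynomial $a(z)$, and apply Perron's Second Theorem. The paper is terser (it simply sets $f_{-1}=0$ so the recurrence holds for all $k\ge 0$ and asserts that the characteristic polynomial is $a(z)$), but your more explicit bookkeeping---dividing by $A_m(k)$ and checking the limits---is exactly what that assertion unpacks to.
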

\begin{proof}
Equation~\eqref{recurrence general} in Lemma~$\ref{solutions}$ yields that the vector $(f_n)_{n\ge -1}$ with $f_{-1}=0$ is a solution of the recurrence equation:
\[
\sum_{i=0}^ma_i(k+i)f_{k+i-1}+\sum_{j=0}^{m-1}b_jf_{k+j}=0 \ \ \text{for} \ \ k\ge 0.
\]
A straightforward computation yields that this recurrence equation is of Poincar\'{e} type and the characteristic polynomial $\chi(z)$ is $a(z)$. Thus Perron's Second Theorem ensures 
$$\limsup_{n\to \infty}|f_n|^{1/n}\le \max_{1\le i \le m}\{|\alpha_i|\}.$$ 
This completes the proof of Corollary~$\ref{estimate fk}$. 
\end{proof}  

\subsubsection{Proof of Proposition~$\ref{estimate PQ}$ $(ii)$}
\begin{lemma} \label{||P||}
Let $v$ be an Archimedean valuation of $K$. Define a constant $c_v(\boldsymbol{\alpha})$ which depends on $\boldsymbol{\alpha}=(\alpha_1,\ldots,\alpha_m)$ and $v$ by 
\begin{align} \label{C}
c_v(\boldsymbol{\alpha})=\sum_{i=1}^m {\rm{h}}_v(\alpha_i)+m\log |4|_v.
\end{align}
Then we have
$$\log \max_{0\le \ell \le m-1}\{||P_{n,\ell}(z)||_v\}\le nc_v(\boldsymbol{\alpha})+o(n).$$ 
\end{lemma}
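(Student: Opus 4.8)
The plan is to obtain an explicit upper bound for $||P_{n,\ell}||_v$ from the closed-form expansion of $P_{n,\ell}(z)$ in Lemma~\ref{explicit P}, combined with the binomial expansion of each factor $(z-\alpha_i)^{n-j_i-k_i}$ exactly as in the non-Archimedean proof (Lemma~\ref{toy 3}), the difference being that we now must use the ordinary triangle inequality in place of the strong triangle inequality, so the combinatorial factors no longer disappear and instead contribute geometric factors.

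First I would start from the expansion \eqref{Pnl expand} of $P_{n,\ell}(z)$, collect the coefficient of a fixed power $z^r$, and bound its $v$-adic absolute value by summing absolute values of all the terms that contribute. The factors $\binom{\ell}{j_{m+1}}$, $\binom{n}{j_i}$, and $\binom{n-j_i-k_i}{w_i}$ are rational integers, so at an Archimedean place their absolute values are bounded by $2^\ell$, $2^n$, and $2^{n}$ respectively (more precisely $\prod_i \binom{n}{j_i}\binom{n-j_i-k_i}{w_i} \le 4^n$ after summing, since the total exponent budget across the $m$ factors is $n$). The Pochhammer quotients $(-s_i)_{k_i}/k_i!$ are controlled: using $|(-s_i)_{k_i}/k_i!|_v \le \prod_{p}(\text{something})$ — more simply, since $|(-s_i)_{k_i}/k_i!| \le \binom{k_i + \lceil |s_i|\rceil}{k_i}$ grows subexponentially in $k_i \le n$, this contributes only $e^{o(n)}$. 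The powers $(-\alpha_i)^{n-j_i-k_i-w_i}$ contribute at most $\mathrm{H}_v(\alpha_i)^n = \max\{1,|\alpha_i|_v\}^n$ each, giving $\prod_{i=1}^m \mathrm{H}_v(\alpha_i)^n = \exp(n\sum_i \mathrm{h}_v(\alpha_i))$. Finally the number of index tuples $(k, k_i, j_i, w_i, j_{m+1})$ being summed over is polynomial in $n$, hence $e^{o(n)}$.

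Putting these together: each coefficient of $P_{n,\ell}$ has $v$-absolute value at most $|4|_v^{\,mn}\cdot\exp(n\sum_{i=1}^m \mathrm{h}_v(\alpha_i))\cdot e^{o(n)}$, which is precisely $\exp(n c_v(\boldsymbol{\alpha}) + o(n))$ with $c_v(\boldsymbol{\alpha})$ as in \eqref{C}. Taking the maximum over $0\le \ell\le m-1$ (which only multiplies by $m$, absorbed in $o(n)$) and over the finitely many coefficients gives the claimed bound $\log\max_{0\le\ell\le m-1}\{||P_{n,\ell}(z)||_v\}\le n c_v(\boldsymbol{\alpha})+o(n)$.

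The main technical point to watch is bookkeeping the $4^n$ factor correctly: one must check that the product of all binomial coefficients appearing in a single term, namely $\binom{\ell}{j_{m+1}}\prod_{i=1}^m\binom{n}{j_i}\binom{n-j_i-k_i}{w_i}$, is bounded by $4^{mn}$ (or even $4^n$, which suffices), using that $\sum_i(j_i+k_i)\le n$ and $\sum_i w_i \le n$, so that each $\binom{n}{j_i}\binom{n-j_i-k_i}{w_i}\le 2^n\cdot 2^n$ and there are $m$ of them, while $\binom{\ell}{j_{m+1}}\le 2^{m-1}$ is a constant; any cruder bound of the shape $C^n$ with $\log C\le m\log 4$ is enough since the remaining subexponential factors are swallowed by $o(n)$. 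I expect this estimate to be entirely routine, the only mild subtlety being to confirm that the Pochhammer quotients $(-s_i)_{k_i}/k_i!$ indeed grow subexponentially at an Archimedean place (they do, being bounded by a product of at most $k_i\le n$ factors each of size $O(n)$ divided by $k_i!$, hence bounded by $(C n)^{k_i}/k_i!$, which via Stirling is $e^{o(n)}$ uniformly in $k_i\le n$).
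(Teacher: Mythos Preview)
Your approach is correct and is essentially identical to the paper's own proof: the paper also starts from the expansion~\eqref{Pnl expand}, bounds each $\binom{n}{k}\le 2^n$ and $(s)_k/k!\le e^{o(n)}$, and applies the triangle inequality to conclude. One small slip: your parenthetical claim that $\prod_i \binom{n}{j_i}\binom{n-j_i-k_i}{w_i}\le 4^n$ is false in general (e.g.\ $m=2$, $j_1=j_2=n/2$ already gives $\binom{n}{n/2}^2\sim 4^n/n$), but the cruder bound $4^{mn}$ you state afterward is exactly what is needed and matches the constant $m\log|4|_v$ in~\eqref{C}.
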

\begin{proof}
We apply the estimate
\begin{align*}
&\dfrac{(s)_k}{k!}\le e^{o(n)}, \ \  \binom{n}{k}\le 2^n \ \ \text{for} \ \ s\in \Q \ \ \text{and} \ \ 0\le k \le n,
\end{align*}
together with the triangular inequality for equation~\eqref{Pnl expand}, we conclude the desire estimate.
\end{proof}

\begin{lemma} \label{trivial estimate}
Let $n$ be a positive integer and $P(z)\in K[z]$ with ${\rm{deg}}\,P=n$. Let $v$ be an Archimedean valuation of $K$ and $\beta\in K$.
Let $f(z)=\sum_{k=0}^{\infty}f_k/z^{k+1}\in (1/z)\cdot K[[1/z]]$ be a Laurent series satisfying $L \cdot f(z)\in K[z]$.
Put $$Q(z)=\varphi_f\left(\dfrac{P(z)-P(t)}{z-t}\right).$$ Then the following inequalities hold.

\medskip

$(i)$ $|P(\beta)|_v\le (n+1)||P||_v {\rm{H}}_v(\beta)^n$.

\medskip

$(ii)$ $||Q||_v\le n||P||_v{\rm{H}}_v(\boldsymbol{\alpha})^{n+o(n)}$.

\medskip

{\rm{(iii)}} $|Q(\beta)|_v\le n^2||P||_v {\rm{H}}_v(\boldsymbol{\alpha})^{n+o(n)}{\rm{H}}_v(\beta)^{n-1}$.
\end{lemma}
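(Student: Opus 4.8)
The plan is to reduce all three bounds to the ordinary triangle inequality together with the single non-elementary input already isolated above, namely the coefficient growth estimate of Corollary~\ref{estimate fk}. Recall first that, for an Archimedean place $v$, the normalized absolute value still obeys $|x+y|_v\le |x|_v+|y|_v$ (the normalizing exponent $[K_v:\R]/[K:\Q]$ being $\le 1$, so that $(a+b)^{t}\le a^{t}+b^{t}$), that $|\cdot|_v$ is multiplicative, and that ${\rm H}_v(\beta)=\exp({\rm h}_v(\beta))=\max\{1,|\beta|_v\}$. For $(i)$, writing $P(z)=\sum_{k=0}^n p_kz^k$ I would simply estimate $|P(\beta)|_v\le \sum_{k=0}^n|p_k|_v|\beta|_v^k\le (n+1)\,||P||_v\,{\rm H}_v(\beta)^n$, using $|p_k|_v\le ||P||_v$ and $|\beta|_v^k\le {\rm H}_v(\beta)^n$ for $0\le k\le n$.

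For $(ii)$, I would first record the explicit shape of $Q$. Exactly as in the proof of Lemma~\ref{toy 1} (see equation~\eqref{expansion Q}), the identity $z^i-t^i=(z-t)\sum_{r=0}^{i-1}t^{i-1-r}z^r$ gives $Q(z)=\sum_{k=0}^{n-1}\bigl(\sum_{i=k+1}^n p_if_{i-1-k}\bigr)z^k$; in particular $\deg Q\le n-1$, and each coefficient of $Q$ is a sum of at most $n$ products $p_if_{l}$ with $0\le l\le n-1$, whence $||Q||_v\le n\,||P||_v\max_{0\le l\le n-1}|f_l|_v$. It then remains to bound $\max_{0\le l\le n-1}|f_l|_v$: fixing the embedding of $K$ into $\C$ attached to $v$, Corollary~\ref{estimate fk} gives $\limsup_{l\to\infty}|f_l|_v^{1/l}\le \max_{1\le i\le m}|\alpha_i|_v\le {\rm H}_v(\boldsymbol{\alpha})$, hence $|f_l|_v\le {\rm H}_v(\boldsymbol{\alpha})^{l+o(l)}$ for all $l$ (the sub-exponential factor, depending only on $f$ and $v$, absorbing the finitely many small-$l$ exceptions), and therefore $\max_{0\le l\le n-1}|f_l|_v\le {\rm H}_v(\boldsymbol{\alpha})^{n+o(n)}$. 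Combining this with the previous display yields $(ii)$.

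Finally, $(iii)$ is obtained by composing the two preceding arguments. Since $\deg Q\le n-1$, the estimate of $(i)$ (applied with $n-1$ in place of $n$) gives $|Q(\beta)|_v\le n\,||Q||_v\,{\rm H}_v(\beta)^{n-1}$, and inserting the bound for $||Q||_v$ from $(ii)$ produces $|Q(\beta)|_v\le n^2\,||P||_v\,{\rm H}_v(\boldsymbol{\alpha})^{n+o(n)}\,{\rm H}_v(\beta)^{n-1}$. The only place where anything beyond bookkeeping enters is the passage, via Perron's Second Theorem, from the Poincar\'{e}-type recurrence satisfied by $(f_l)_l$ to the coefficient bound $|f_l|_v\le {\rm H}_v(\boldsymbol{\alpha})^{l+o(l)}$, i.e. Corollary~\ref{estimate fk}; this is the conceptual heart of the argument, and since it has already been established, the proof of Lemma~\ref{trivial estimate} is essentially routine.
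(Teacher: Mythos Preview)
Your proof is correct and follows essentially the same route as the paper: part $(i)$ is the triangle inequality, part $(ii)$ combines the explicit expansion \eqref{expansion Q} with the coefficient bound $\max_{0\le l\le n-1}|f_l|_v\le {\rm H}_v(\boldsymbol{\alpha})^{n+o(n)}$ drawn from Corollary~\ref{estimate fk}, and part $(iii)$ chains $(i)$ (applied to $Q$ of degree $\le n-1$) with $(ii)$. Your added remark that the normalized Archimedean absolute value still satisfies the triangle inequality because $(a+b)^t\le a^t+b^t$ for $t\le 1$ is a useful clarification the paper leaves implicit.
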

\begin{proof}
$(i)$ This is an easy consequence of the triangle inequality.

\medskip

$(ii)$  Equation~\eqref{expansion Q} together with the triangle inequality and the estimate
$$\max_{0\le k \le n-1} \{|f_k|_v\}\le {\rm{H}}_v(\boldsymbol{\alpha})^{n+o(n)}$$
deduced from Corollary~$\ref{estimate fk}$ allows us to obtain the desire inequality. 

\medskip

{\rm{(iii)}} Combining $(i)$ and $(ii)$ yields 
$$|Q(\beta)|_v\le n||Q||_v {\rm{H}}_v(\beta)^{n-1}\le  n^2||P||_v {\rm{H}}_v(\boldsymbol{\alpha})^{n+o(n)}{\rm{H}}_v(\beta)^{n-1}.$$
This completes the proof of Lemma~$\ref{trivial estimate}$.
\end{proof}

\begin{proof}[\textbf{Proof of Proposition~$\ref{estimate PQ}$} $(ii)$]
Let $c_v(\boldsymbol{\alpha})$ be the constant defined in \eqref{C}.
We apply Lemma~$\ref{trivial estimate}$ $(i)$ with $P=P_{n,\ell}$. 
Using Lemma~$\ref{||P||}$ and the equality ${\rm{deg}}\,P_{n,\ell}=\ell+(m-1)n$ yields
\begin{align} \label{P}
\log\,|P_{n,\ell}(\beta)|_v\le n\left(c_v(\boldsymbol{\alpha})+(m-1){\rm{h}}_v(\beta)+o(1)\right).
\end{align}
For the estimating of $Q_{n,j,\ell}$, we invoke Lemma~$\ref{trivial estimate}$ ({\rm{iii}}), which gives 
\begin{align} \label{Q} 
\log\,|Q_{n,j,\ell}(\beta)|_v \leq  n\left(c_v(\boldsymbol{\alpha})+(m-1)\left({\rm{h}}_v(\boldsymbol{\alpha})+{\rm{h}}_v(\beta)\right)+o(1)\right). 
\end{align} 
Combining inequalities \eqref{P} and \eqref{Q}, we arrive at the desired estimate.
\end{proof}

\subsection{Absolute values of the Pad\'{e} approximations}
For a rational number $s$ and a place $v$ of $K$, we define
$$
\mu_v(s)=\left\{\begin{array}{ll} 1 & \mbox{if $v\in \mathfrak{M}^{\infty}_K$ or $v\in \mathfrak{M}^{f}_K$ \& $\vert s \vert_v\leq 1$},\\
\vert {\rm{den}}(s) \vert_v\vert p\vert_v^{\tfrac{1}{p-1}} & \mbox{if $v\in \mathfrak{M}^{f}_K$ \& $\vert s \vert_v>1$ where $p$ is the prime below $v$}.\end{array}\right.
$$
\begin{proposition} \label{Remainder}
Let $v$ be a place of $K$ and $\beta\in K$. 

\medskip

$(i)$ Assume that $v$ is non-Archimedean and 
\begin{align} \label{beta}
|\beta|_v>\prod_{i=1}^m\mu_v(s_i)^{-1}\cdot {\rm{H}}_v(\boldsymbol{\alpha}).
\end{align}
Then the series $\mathfrak{R}_{n,j,\ell}(z)$ converges to an element of $K_v$ at $z=\beta$ and 
\[
\log\max_{\substack{0\le j \le m-2 \\ 0\le \ell \le m-1}} \{|\mathfrak{R}_{n,j,\ell}(\beta)|_v\} \le  n\left(-\log|\beta|_v+\sum_{i=1}^m{\rm{h}}_v(\alpha_i)+m{\rm{h}}_v(\boldsymbol{\alpha})-m\sum_{i=1}^m\log \mu_v(s_i)+o(1)\right).
\]

\medskip

$(ii)$ Assume that $v$ is Archimedean and $|\beta|_v> \max\{|\alpha_i|_v\}$. Then the series $\mathfrak{R}_{n,j,\ell}(z)$ converges to an element of $K_v$ at $z=\beta$ and   
\[
\log\max_{\substack{0\le j \le m-2 \\ 0\le \ell \le m-1}} \{|\mathfrak{R}_{n,j,\ell}(\beta)|_v\} \le n\left(-\log|\beta|_v+\sum_{i=1}^m{\rm{h}}_v(\alpha_i)+m{\rm{h}}_v(\boldsymbol{\alpha})+m\log|2|_v+o(1)\right).
\]
\end{proposition}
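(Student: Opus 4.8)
The plan is to exploit the explicit series representation of the remainders obtained in Theorem~\ref{Pade fj}~$(ii)$, which in the present setting reads
\[
\mathfrak{R}_{n,j,\ell}(z)=(-1)^n\sum_{k=n}^{\infty}\binom{k}{n}\dfrac{\varphi_{j}(t^{k+\ell-n}a(t)^{n})}{z^{k+1}},
\]
and to estimate the $v$-adic size of the coefficients $\varphi_j(t^{k+\ell-n}a(t)^n)=f_{j,k+\ell-n}\cdot(\text{contribution of }a(t)^n)$ term by term. First I would expand $a(t)^n=\prod_{i=1}^m(t-\alpha_i)^n$ by the multinomial theorem, so that $t^{k+\ell-n}a(t)^n$ is a $\Z$-linear combination of monomials $t^{k+\ell-n+r}$ with $0\le r\le mn$ and coefficients bounded by $\binom{mn}{\cdot}\le m^{mn}=e^{o(n)}$ (Archimedean) or of $v$-adic absolute value $\le 1$ (non-Archimedean), times a product of powers of the $\alpha_i$ contributing ${\rm H}_v(\boldsymbol\alpha)^n$. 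Applying $\varphi_j$ replaces $t^{k+\ell-n+r}$ by $f_{j,k+\ell-n+r}$, whose size is controlled by Lemma~\ref{toy 2} in the non-Archimedean case (giving the $\mu_n(s_i)$ and $d_n(b_{m-1})$ factors, absorbed into $o(n)$ after using Lemma~\ref{well-known}~$(iv)$ for the $d_n$ part at a fixed $v$) and by Corollary~\ref{estimate fk} in the Archimedean case (giving $|f_{j,N}|_v\le {\rm H}_v(\boldsymbol\alpha)^{N+o(N)}$, hence $\le\max_i|\alpha_i|_v^{\,N}e^{o(n)}$ for $N\le (m+1)n+o(n)$).

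Next I would handle the geometric tail. Under the hypothesis \eqref{beta} in case~$(i)$, respectively $|\beta|_v>\max_i|\alpha_i|_v$ in case~$(ii)$, the coefficient bound grows at most like $C_v^{\,k}$ with $C_v<|\beta|_v$, so the series $\sum_{k\ge n}\binom{k}{n}C_v^{k}/|z^{k+1}|$ evaluated at $z=\beta$ converges; in the non-Archimedean case the ultrametric inequality makes the sum equal in size to its first term, while in the Archimedean case $\binom{k}{n}\le 2^k$ and a standard geometric-series estimate show the sum is $\le (\text{first term})\cdot(1-C_v/|\beta|_v)^{-1}2^{\,n+o(n)}$, the $2^n$ folding into $m\log|2|_v$. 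In both cases the dominant term is $k=n$, contributing $|\beta|_v^{-(n+1)}$ times the coefficient bound $\binom{n}{n}|\varphi_j(t^{\ell}a(t)^n)|_v$, and evaluating the coefficient bound at $k=n$ gives precisely $\exp\!\big(n(-\log|\beta|_v+\sum_i{\rm h}_v(\alpha_i)+m\,{\rm h}_v(\boldsymbol\alpha)\mp m\sum_i\log\mu_v(s_i)\ \text{or}\ +m\log|2|_v)+o(n)\big)$; here one must be slightly careful that it is $k+\ell-n$, not $k-n$, appearing in the index of $f_j$, but the shift by $\ell\le m-1$ is absorbed into $o(n)$.

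For the convergence assertion itself I would note that once the coefficient-growth bound $C_v^{k}e^{o(k)}$ with $C_v<|\beta|_v$ is in place, $\mathfrak{R}_{n,j,\ell}(\beta)$ is an absolutely convergent series in $K_v$ (non-Archimedean: a $p$-adically convergent series; Archimedean: dominated by a convergent geometric-type series), so it defines an element of $K_v$; equivalently $f_j(\beta)$, $P_{n,\ell}(\beta)$, $Q_{n,j,\ell}(\beta)$ all make sense and $\mathfrak R_{n,j,\ell}(\beta)=P_{n,\ell}(\beta)f_j(\beta)-Q_{n,j,\ell}(\beta)$. The main obstacle I anticipate is bookkeeping the non-Archimedean denominators cleanly: one must check that the $\mu_n(s_i)$-factors combine via Lemma~\ref{well-known}~$(ii)$ to give exactly $m\sum_i\log\mu_v(s_i)$ per unit of $n$ (note $\mu_v(s)$ versus $\mu_n(s)$: $\log|\mu_n(s_i)|_v^{-1}=n\log\mu_v(s_i)+o(n)$ for $v$ finite, with the $q^{1/(q-1)}$-type correction coming from the floor terms $\lfloor n/(q-1)\rfloor$), and that the $d_{(m-1)(n+1)}(b_{m-1})$ factor disappears into $o(n)$ at a fixed finite $v$ by Lemma~\ref{well-known}~$(iv)$ while at the Archimedean places it contributes nothing to these particular estimates. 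Apart from that the argument is a routine triangle-inequality (resp. strong-triangle-inequality) estimate on the explicit series.
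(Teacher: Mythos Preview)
Your plan is essentially the paper's own proof: start from the series in Theorem~\ref{Pade fj}~$(ii)$, expand $a(t)^n=\prod_i(t-\alpha_i)^n$ as a sum of monomials with coefficients $\prod_i\binom{n}{k_i}(-\alpha_i)^{n-k_i}$, bound $|f_{j,N}|_v$ via Lemma~\ref{toy 2} (non-Archimedean) or Corollary~\ref{estimate fk} (Archimedean), and then control the tail (ultrametrically in case~$(i)$, by the generating function $\sum_{k\ge0}\binom{k+n}{n}x^k=(1-x)^{-(n+1)}$ in case~$(ii)$).

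Two bookkeeping slips to fix. First, the $\alpha$--contribution from expanding $a(t)^n$ is $\prod_{i=1}^m{\rm H}_v(\alpha_i)^n$, not ${\rm H}_v(\boldsymbol{\alpha})^n$; this is exactly the term $\sum_i{\rm h}_v(\alpha_i)$ in the statement. Second, in the Archimedean case the factor $m\log|2|_v$ does \emph{not} come from $\binom{k}{n}$ in the tail: it comes from $\prod_{i=1}^m\binom{n}{k_i}\le |2|_v^{mn}$ in the expansion of $a(t)^n$ (so your claim ``$\binom{mn}{\cdot}\le m^{mn}=e^{o(n)}$'' is the wrong estimate and the wrong conclusion). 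The tail sum itself equals $(1-\max_i|\alpha_i|_v/|\beta|_v)^{-(n+1)}$, which the paper absorbs into the $o(n)$ term. With these corrections your outline matches the paper line for line.
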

\begin{proof} 
Before starting the proof, we give an expansion of $\mathfrak{R}_{n,j,\ell}(z)$.
Combining the expansion 
$$t^{k+\ell}a(t)^n=\sum_{w=0}^{mn}\sum_{\substack{ k_i\le n \\ \sum{k_i=w}}}\prod_{i=1}^m\binom{n}{k_i}(-\alpha_i)^{n-k_i}t^{w+k+\ell},$$
and Theorem~$\ref{Pade fj}$ $(ii)$ implies 
\begin{align} \label{decomp R_j} 
\mathfrak{R}_{n,j,\ell}(z)=\dfrac{(-1)^n}{z^{n+1}}\sum_{k=0}^{\infty}\binom{k+n}{n}\left(\sum_{w=0}^{mn}\sum_{\substack{ k_i\le n \\ \sum{k_i}=w}}\prod_{i=1}^m\binom{n}{k_i}(-\alpha_i)^{n-k_i}f_{w+k+\ell}\right)\cdot z^{-k}.
\end{align}

\medskip

$(i)$
Let $v$ be a non-Archimedean valuation. 
By the strong triangle inequality together with Lemma~\ref{toy 2}, we obtain
\begin{align}
&\left|
\binom{k+n}{n}
\sum_{w=0}^{mn}
\sum_{\substack{k_i \le n \\ \sum k_i = w}}
\prod_{i=1}^m \binom{n}{k_i}(-\alpha_i)^{n-k_i}
f_{w+k+\ell}\,\beta^{-k}
\right|_v
\le
\label{bound coeff}\\
&\quad
\prod_{i=1}^m {\rm H}_v(\alpha_i)^{n}
\cdot {\rm H}_v(\boldsymbol{\alpha})^{mn+k+m}
\cdot \prod_{i=1}^m |\mu_{mn+k+m}(s_i)|_v^{-1}
\cdot |d_{mn+k+m}(b_{m-1})|_v^{-1}
\cdot |b_{m-1}|_v^{-1}
|\beta|_v^{-k},
\nonumber
\end{align}
for every $k \ge 0$.

By Lemma~\ref{well-known}~(ii) and the divisibility relation 
\[
d_{mn+k+m}(b_{m-1}) \mid d_{m(n+1)}(b_{m-1})\,d_{k}(b_{m-1}+m(n+1)),
\]
we have
\begin{align*}
&\prod_{i=1}^m |\mu_{mn+k+m}(s_i)|_v^{-1}\,
  |d_{mn+k+m}(b_{m-1})|_v^{-1}\\
&\qquad \le
\prod_{i=1}^m |\mu_{m(n+1)}(s_i)|_v^{-1}\,
  |d_{(m+1)n}(b_{m-1})|_v^{-1}
  \prod_{i=1}^m |\mu_{k}(s_i)|_v^{-1}\,
  |d_{k}(b_{m-1}+m(n+1))|_v^{-1}.
\end{align*}
Combining this with~\eqref{bound coeff}, we obtain
{\small
\begin{align*}
\left|
\binom{k+n}{n}
\sum_{w=0}^{mn}
\sum_{\substack{k_i\le n \\ \sum k_i = w}}
\prod_{i=1}^m \binom{n}{k_i}(-\alpha_i)^{n-k_i}
f_{w+k+\ell}\,\beta^{-k}
\right|_v
&\le
\prod_{i=1}^m |\mu_{m(n+1)}(s_i)|_v^{-1}
|d_{(m+1)n}(b_{m-1})|_v^{-1}
\prod_{i=1}^m {\rm H}_v(\alpha_i)^{n}
{\rm H}_v(\boldsymbol{\alpha})^{m(n+1)}\\
&\quad
\cdot |b_{m-1}|_v^{-1}|d_{k}(b_{m-1}+m(n+1))|_v^{-1}
{\rm{H}}_v(\boldsymbol{\alpha})^k\prod_{i=1}^m |\mu_{k}(s_i)|_v^{-1}
|\beta|_v^{-k}.
\end{align*}
}
Since Lemma~\ref{well-known}~(iv) implies that
\[
|d_{k}(b_{m-1}+m(n+1))|_v^{-1} = o(k)
\qquad (k \to \infty),
\]
and by assumption~\eqref{beta}, we deduce that
\[
\limsup_{k\to\infty}
|b_{m-1}|_v^{-1}|d_{k}(b_{m-1}+m(n+1))|_v^{-1}
{\rm{H}}_v(\boldsymbol{\alpha})^k
\prod_{i=1}^m |\mu_{k}(s_i)|_v^{-1}
|\beta|_v^{-k}
= 0.
\]
Hence $\mathfrak{R}_{n,j,\ell}(z)$ converges to an element of $K_v$ at $z=\beta$, and
\begin{align*}
\log |\mathfrak{R}_{n,j,\ell}(\beta)|_v
&\le
\log |\beta|_v^{-n-1}
+ \log \max_{k\ge 0}
\left\{
\left|
\binom{k+n}{n}
\sum_{w=0}^{mn}
\sum_{\substack{k_i\le n \\ \sum k_i = w}}
\prod_{i=1}^m \binom{n}{k_i}(-\alpha_i)^{n-k_i}
f_{w+k+\ell}\,\beta^{-k}
\right|_v
\right\}\\[2mm]
&\le
n\left(
-\log|\beta|_v
+ \sum_{i=1}^m {\rm h}_v(\alpha_i)
+ m\,{\rm h}_v(\boldsymbol{\alpha})
- m\sum_{i=1}^m \log \mu_v(s_i)
+ o(1)
\right).
\end{align*}

\medskip

$(ii)$ Let $v$ be an Archimedean valuation. 
The triangle inequality and Corollary~$\ref{estimate fk}$ lead us to 
$$\left|\sum_{w=0}^{mn}\sum_{\substack{ k_i\le n \\ \sum{k_i}=w}}\prod_{i=1}^m\binom{n}{k_i}(-\alpha_i)^{n-k_i}f_{w+k+\ell}\right|_v\le e^{o(n)}|2|^{mn}_v\prod_{i=1}^m{\rm{H}}_v(\alpha_i)^n \max\{|\alpha_i|_v\}^{mn+k}$$
for any $k\ge0$. Therefore combining \eqref{decomp R_j} and the assumption $|\beta|_v> \max\{|\alpha_i|_v\}$ yields that $\mathfrak{R}_{n,j,\ell}(z)$ converges at $\beta$ in $K_v$ and 
\begin{align*}
\left|\mathfrak{R}_{n,j,\ell}(\beta)\right|_v&\le  e^{o(n)}|\beta|^{-n}_v|2|^{mn}_v\prod_{i=1}^m{\rm{H}}_v(\alpha_i)^n \max\{|\alpha_i|_v\}^{mn}\sum_{k=0}^{\infty}\binom{k+n}{n}\left(\dfrac{\max\{|\alpha_i|_v\}}{|\beta|_v}\right)^k\\
&\le e^{o(n)}|\beta|^{-n}_v|2|^{mn}_v\prod_{i=1}^m{\rm{H}}_v(\alpha_i)^n {\rm{H}}_v(\boldsymbol{\alpha})^{mn}.
\end{align*}
Taking the logarithm, we obtain the conclusion of $(ii)$.
\end{proof}
\section{Proof of Theorem~$\ref{main}$} \label{proof}
We keep the notation in Section $\ref{Estimate}$.
For a positive integer $n$, we recall that the polynomials $P_{n,\ell}(z)$ and $Q_{n,j,\ell}(z)$ are defined in equation~\eqref{P,Q}. 
Let us fix a place $v_0$ of $K$ and let $\beta\in K$. 
Define the following $m$ by $m$ matrix $M_n$ as 
$$M_n=\begin{pmatrix}
P_{n,0}(\beta) & P_{n,1}(\beta) & \cdots & P_{n,m-1}(\beta)\\
Q_{n,0,0}(\beta)& Q_{n,0,1}(\beta) & \cdots & Q_{n,0,m-1}(\beta)\\
\vdots & \vdots & \ddots & \vdots\\
Q_{n,m-2,0}(\beta)& Q_{n,m-2,1}(\beta) & \cdots & Q_{n,m-2,m-1}(\beta)
\end{pmatrix}\in {\rm{Mat}}_m(K).
$$ 
Our proof relies on a qualitative linear independence criterion \cite[Proposition~$5.6$]{DHK2} which is based on the method of C.~F.~Siegel ({\it{see}} \cite{Siegel1}).
Define real numbers:
\begin{align*}
&\mathbb{A}_{v_0}(\beta)=
\begin{cases}
\log|\beta|_{v_0}-\sum_{i=1}^m{\rm{h}}_{v_0}(\alpha_i)-m{\rm{h}}_{v_0}(\boldsymbol{\alpha})+m\sum_{i=1}^m\log \mu_{v_0}(s_i) & \ \text{if} \ v \nmid \infty,\\
\log|\beta|_{v_0}-\sum_{i=1}^m{\rm{h}}_{v_0}(\alpha_i)-m{\rm{h}}_{v_0}(\boldsymbol{\alpha})-m\log|2|_{v_0}  & \ \text{if} \ v \mid \infty,
\end{cases}\\
&U_{v_0}(\beta)=\left(\sum_{i=1}^m{\rm{h}}_v(\alpha_i)+(m-1)({\rm{h}}_v(\boldsymbol{\alpha})+{\rm{h}}_v(\beta))\right)+m\sum_{i=1}^m\log \mu_{v_0}(s_i).
\end{align*}

\medskip

We now restate Theorem~\ref{main} together with a linear independence measure. 
\begin{theorem} \label{main+measure}
We use the same notations in Theorem~\ref{main}. Let $v_0\in \mathfrak{M}_K$
such that $V_{v_0}(\beta)>0$.
Then the series $f_j(z)$ for $0 \leq j \le m-2$ converge around $\beta$ in $K_{v_0}$ and
for any positive number $\varepsilon$ with $\varepsilon<V_{v_0}(\beta)$, there exists an effectively computable positive number $H_0$ depending on $\varepsilon$ and the given data such that the following property holds.
For any ${{\boldsymbol{\lambda}}}=({{\lambda}},{{\lambda_{j}}})_{0\le j \le m-2} \in K^{m} \setminus \{ \bold{0} \}$ satisfying $H_0\le {\mathrm{H}}({{\boldsymbol{\lambda}}})$, then 
\begin{align*}
\left|{{\lambda}}+\sum_{j={0}}^{m-2}{{\lambda_{j}}}f_{j}(\beta)\right|_{v_0}>C(\beta,\varepsilon){\mathrm{H}}_{v_0}({{\boldsymbol{\lambda}}}) {\mathrm{H}}({{\boldsymbol{\lambda}}})^{-\mu(\beta,\varepsilon)}\enspace,
\end{align*}
where 
\begin{align*}
&\mu(\beta,\varepsilon)=\dfrac{\mathbb{A}_{v_0}(\beta)+{{U}}_{v_0}(\beta)}{V_{v_0}(\beta)-\epsilon},\\
&C(\beta,\varepsilon)=\exp\left(-{{\left(\frac{\log(2)}{V_{v_0}(\beta)-\varepsilon}+1\right)}}(\mathbb{A}_{v_0}(\beta)+{{U}}_{v_0}(\beta)\right).
\end{align*}
\end{theorem}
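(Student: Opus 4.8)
The plan is to feed the explicit Pad\'{e}-type data produced in Sections~\ref{Pade}--\ref{Estimate} into the Siegel-type linear independence criterion \cite[Proposition~5.6]{DHK2}. For every positive integer $n$ and every $0\le\ell\le m-1$ we have, by Theorem~\ref{Pade fj}~(i) and Lemma~\ref{formal solutions1}, a weight $n$ Pad\'{e}-type approximant $(P_{n,\ell},Q_{n,0,\ell},\ldots,Q_{n,m-2,\ell})$ of $(f_0,\ldots,f_{m-2})$ together with remainders $\mathfrak{R}_{n,j,\ell}(z)=P_{n,\ell}(z)f_j(z)-Q_{n,j,\ell}(z)$. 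Evaluating at $z=\beta$ gives, for each $n$, the matrix $M_n$ and the exact relations $P_{n,\ell}(\beta)f_j(\beta)-Q_{n,j,\ell}(\beta)=\mathfrak{R}_{n,j,\ell}(\beta)$; equivalently, writing $\Lambda=\lambda+\sum_j\lambda_jf_j(\beta)$ for a candidate vanishing form, $P_{n,\ell}(\beta)\Lambda=\bigl(\lambda P_{n,\ell}(\beta)+\sum_j\lambda_jQ_{n,j,\ell}(\beta)\bigr)+\sum_j\lambda_j\mathfrak{R}_{n,j,\ell}(\beta)$, where the first bracket lies in $K$ and the second is small at $v_0$. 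The criterion converts a nonvanishing determinant for $M_n$, size bounds for its entries, and decay bounds for the $\mathfrak{R}_{n,j,\ell}(\beta)$ into the linear independence of $1,f_0(\beta),\ldots,f_{m-2}(\beta)$ over $K$ with an effective measure.

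I would first dispose of the preliminary points. Convergence of each $f_j$ at $z=\beta$ in $K_{v_0}$ is contained in Proposition~\ref{Remainder}, whose hypotheses are $|\beta|_{v_0}>\prod_i\mu_{v_0}(s_i)^{-1}\,{\rm{H}}_{v_0}(\boldsymbol{\alpha})$ in the non-Archimedean case and $|\beta|_{v_0}>\max_i|\alpha_i|_{v_0}$ in the Archimedean case: both are implied by $V_{v_0}(\beta)>0$, since that inequality forces ${\mathrm{h}}_{v_0}(\beta)$ to be large compared with the heights of the $\alpha_i$ and the denominators of the $s_i$. For the determinant, $\det M_n=\Delta_n(\beta)$ by construction, and Theorem~\ref{nonvanish Delta} gives $\Delta_n(\beta)\in K\setminus\{0\}$ once its hypotheses \eqref{key assump2} and \eqref{important assump} are checked in the present setting ($u=m$, $\deg b\le m-1$, hence $w=m-2$): the coprimality of $na'(z)+b(z)$ with $a(z)$ for all $n\ge1$ is immediate, since a common factor would be some $z-\alpha_i$ (the roots being simple by \eqref{first}), forcing $s_i=b(\alpha_i)/a'(\alpha_i)=-n\in\Z_{\le-1}$, against \eqref{second}; and the leading-coefficient condition reduces, since $a$ is monic, to $b_{m-1}\notin\Z_{<-1}$, which is \eqref{third}.

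The core of the proof is the arithmetic accounting. From Proposition~\ref{estimate PQ} one reads off, place by place, the asymptotics of $\log\max\{|P_{n,\ell}(\beta)|_v,|Q_{n,j,\ell}(\beta)|_v\}$; summing over all $v\in\mathfrak{M}_K$, collapsing the contributions of the positive integers $\mu_n(s_i)$ and $d_{(m-1)(n+1)}(b_{m-1})$ via the product formula, and using $\tfrac1n\log\mu_n(s_i)\to\log\mu(s_i)$ together with Lemma~\ref{well-known}~(iii) for $\tfrac1n\log d_n(b_{m-1})$, produces a total growth exponent $\mathcal{T}$ of the (denominator-cleared) entries of $M_n$, while at $v_0$ alone the same proposition gives the local exponent $U_{v_0}(\beta)$; from Proposition~\ref{Remainder} one gets $\log\max_{j,\ell}|\mathfrak{R}_{n,j,\ell}(\beta)|_{v_0}\le n(-\mathbb{A}_{v_0}(\beta)+o(1))$, so $\mathbb{A}_{v_0}(\beta)$ is the approximation exponent at $v_0$. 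A direct comparison of the defining formulas then yields the identity $V_{v_0}(\beta)=\mathbb{A}_{v_0}(\beta)+U_{v_0}(\beta)-\mathcal{T}$, so the hypothesis $V_{v_0}(\beta)>0$ is precisely the profit condition required by \cite[Proposition~5.6]{DHK2}. Applying the criterion—concretely: for a nonzero ${\boldsymbol{\lambda}}$, choose $n$ of order ${\mathrm{h}}({\boldsymbol{\lambda}})/V_{v_0}(\beta)$; since $\det M_n\ne0$ the $m$ numbers $\lambda P_{n,\ell}(\beta)+\sum_j\lambda_jQ_{n,j,\ell}(\beta)$, $0\le\ell\le m-1$, cannot all vanish, so fix $\ell$ with this quantity nonzero and apply the product formula to it, using the size bounds at $v\ne v_0$, the decay bound at $v_0$, and the displayed relation—yields both the linear independence of $1,f_0(\beta),\ldots,f_{m-2}(\beta)$ over $K$ and the measure with exponent $\mu(\beta,\varepsilon)$ and constant $C(\beta,\varepsilon)$ as stated.

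I expect the main obstacle to be exactly this accounting step: summing the place-by-place estimates of Propositions~\ref{estimate PQ} and \ref{Remainder}, handling the integer denominators $\mu_n(s_i)$ and $d_{(m-1)(n+1)}(b_{m-1})$ through the product formula and Lemma~\ref{well-known}, and checking that everything collapses precisely to $\mathbb{A}_{v_0}(\beta)$, $U_{v_0}(\beta)$ and $V_{v_0}(\beta)$, so that the hypothesis $V_{v_0}(\beta)>0$ meets the threshold of \cite[Proposition~5.6]{DHK2}. The conceptual heart of the argument, namely the non-vanishing of the determinant $\Delta_n$, is already settled in Section~\ref{Pade linear independence} via Theorem~\ref{nonvanish Delta}, so what remains here is careful but essentially routine bookkeeping.
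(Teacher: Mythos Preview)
Your proposal is correct and follows essentially the same route as the paper: verify the hypotheses \eqref{key assump2}--\eqref{important assump} of Theorem~\ref{nonvanish Delta} from \eqref{first}--\eqref{third} to get $\det M_n\neq 0$, feed the size bounds of Proposition~\ref{estimate PQ} and the remainder bounds of Proposition~\ref{Remainder} (together with Lemma~\ref{well-known}) into the criterion \cite[Proposition~5.6]{DHK2}, and read off the measure. Your identity $V_{v_0}(\beta)=\mathbb{A}_{v_0}(\beta)+U_{v_0}(\beta)-\mathcal{T}$ is exactly the paper's profit inequality $\mathbb{A}_{v_0}(\beta)-\limsup_n\tfrac{1}{n}\sum_{v\neq v_0}F_v(n)\le V_{v_0}(\beta)$ rephrased, and your unpacking of the criterion (choose $n$, pick a nonvanishing column, apply the product formula) is just the content of that black-box proposition made explicit.
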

\begin{proof}
Firstly, we claim that $M_n$ is invertible by applying Theorem~\ref{nonvanish Delta}. 
To this end, we show that equations~\eqref{key assump2} and~\eqref{important assump} in Theorem~\ref{nonvanish Delta} hold for our $a(z), b(z)$.

\medskip

First, we claim that the assumptions \eqref{first} and \eqref{second} imply \eqref{key assump2}.
Suppose not. Then there exist a positive integer $n$ and a root $\alpha_i$ of $a(t)$ such that 
\[
na'(\alpha_i)+b(\alpha_i)=0.
\]
The assumption~\eqref{first} implies $a'(\alpha_i)\neq0$, and the above equality yields
\[
\dfrac{b(\alpha_i)}{a'(\alpha_i)}=-n\in \Z_{\le -1}.
\]
This contradicts the assumption~\eqref{second}.
Moreover, the assumption~\eqref{third} implies that equation~\eqref{important assump} holds.
Therefore, Theorem~\ref{nonvanish Delta} ensures that $\det M_n\in K\setminus\{0\}$.

\medskip 

For $v\in \mathfrak{M}_K$, we define functions $F_v:\N\longrightarrow \R_{\ge0}$ by
\begin{align*}
F_v(n)&=n\left(\sum_{i=1}^m{\rm{h}}_v(\alpha_i)+(m-1)({\rm{h}}_v(\boldsymbol{\alpha})+{\rm{h}}_v(\beta))\right)\\
&+
\begin{cases}
m\sum_{i=1}^m\log |\mu_n(s_i)|_v^{-1}+\log|d_{(m-1)(n+1)}(b_{m-1})|_v+o(n) & \ \text{if} \ v \nmid \infty,\\
m \log |4|_v+o(n)  & \ \text{if} \ v \mid \infty,
\end{cases}
\end{align*}
where $o(n)=0$ for all but finitely many finite places non-Archimedean places $v$. Notice that
\[
\limsup_{n\to \infty}\dfrac{1}{n}F_{v_0}(n)=U_{v_0}(\beta).
\]

Proposition~$\ref{estimate PQ}$ allows us to get
\begin{align*} 
\max_{\substack{0\le j \le m-2 \\ 0\le \ell \le m-1}}\log\, \max\{|P_{n,j}(\beta)|_v, |Q_{n,j,\ell}(\beta)|_{v}\}\le F_{v}(n).
\end{align*}

\medskip

Then Proposition~$\ref{Remainder}$ yields 
\begin{align*}
\max_{\substack{0 \le j \le m-2 \\ 0\le \ell \le m-1}} \log\, |\mathfrak{R}_{n,j,\ell}(\beta)|_{v_0}\le -\mathbb{A}_{v_0}(\beta)n+o(n).
\end{align*}

Lemma~\ref{well-known}~$(iii)$ yields
\begin{align*}
&\limsup_{n\to \infty}\dfrac{1}{n}\log d_{(m-1)(n+1)}(b_{m-1})=(m-1)\dfrac{{\rm{den}}(b_{m-1})}{\varphi({\rm{den}}(b_{m-1}))}\sum_{\substack{ 1\le j\le {{\rm{den}}(b_{m-1})} \\ (j,{\rm{den}}(b_{m-1}))=1}}\dfrac{1}{j},
\end{align*} 
and we obtain
$$\mathbb{A}_{v_0}(\beta)-\limsup_n \dfrac{1}{n}\sum_{v\neq v_0}F_v(n)\le V_{v_0}(\beta),$$
where $V_{v_0}(\beta)$ is the real number defined in \eqref{V}.
Using a qualitative linear independence criterion in \cite[Proposition~$5.6$]{DHK2} for 
$$ 
\vartheta_{j}=f_j(\beta) \ \ \text{for} \ \ 0\le j \le m-2,
$$ 
and the family of invertible matrices $(M_n)_n$, and applying above estimates, we obtain Theorem~$\ref{main}$.
\end{proof}

\section{Appendix: Jordan-Pochhammer equation} \label{Appendix}
The equation of the following form is called the Jordan-Pochhammer equation ({\it confer} \cite[$18.4$] {I}):
\begin{align} \label{J-P}
\sum_{i=0}^m\binom{-\mu}{i}Q^{(i)}(z)\dfrac{d^{m-i}}{dz^{m-i}}-\sum_{j=0}^{m-1}\binom{-\mu-1}{j}R^{(j)}(z)\dfrac{d^{m-1-j}}{dz^{m-1-j}}
\end{align}
where $\mu$ is a complex number, $\binom{\mu}{0}=1$ and
\begin{align*}
&\binom{\mu}{i}=\dfrac{\mu(\mu-1)\ldots(\mu-i+1)}{i!} \ \ \text{for} \ \ i\ge1,\\
&Q(z)=(z-\alpha_1)\ldots(z-\alpha_m)\in \C[z] \ \ \text{with disinct roots}, \\ 
&R(z)\in \C[z] \ \text{of degree at most} \ m-1.
\end{align*}
We observe that the equation~\eqref{J-P} is of Fuchsian type with singularities $\alpha_1,\ldots,\alpha_m,\infty$, and has the following Riemann scheme:
$$
\begin{Bmatrix}
  \alpha_1 & \cdots & \alpha_m &  \infty \\
  0            & \cdots &   0        &    -\mu-1\\
  1            & \cdots &   1        &    -\mu-2\\
  \vdots     & \ddots &   \vdots        &   \vdots\\
 m-2          & \cdots &   m-2   &    -\mu-m+1\\
 m+\mu+s_1-1 & \cdots &   m+\mu+s_m-1   &  -\mu-\gamma
\end{Bmatrix},
$$
where $s_i:=R(\alpha_i)/Q'(\alpha_i)$ for $1\le i \le m$ and $\gamma=s_1+\ldots+s_m$.
\begin{example}\label{reduce JP}
Let $m\ge 2$ be an integer and $a(z),b(z)\in \C[z]$, where $a(z)$ is a monic polynomial of degree $m$ with distinct roots, and $b(z)$ is a polynomial of degree at most $m-1$.
Denote the differential operator $$L=\dfrac{d^{m-1}}{dz^{m-1}}\left(a(z)\dfrac{d}{dz}-b(z)\right).$$
Then, the following identity holds: 
\begin{align*}
L&=\sum_{i=0}^m\binom{m}{i}a^{(i)}(z)\dfrac{d^{m-i}}{dz^{m-i}}-\sum_{j=0}^{m-1}\binom{m-1}{j}(a'(z)+b(z))^{(j)}\dfrac{d^{m-1-j}}{dz^{m-1-j}}.
\end{align*}
Thus, $L$ takes a form of a Jordan-Pochhammer equation with $Q(z)=a(z), R(z)=a'(z)+b(z)$ and $\mu=-m$.
\end{example}

\noindent
{\bf Acknowledgements.}

The author expresses sincere gratitude to the anonymous referees for their valuable comments. Special thanks are due to Professor Ga\"{e}l R\'{e}mond for his insightful suggestions that improved the presentation of this paper, and to Akihito Ebisu for helpful discussions regarding the Jordan-Pochhammer equation. This work was supported in part by the Research Institute for Mathematical Sciences, an international joint usage and research center located at Kyoto University. The author was also supported by JSPS KAKENHI Grant Number JP24K16905.

\bibliography{}

\begin{thebibliography}{99}%
\bibitem{An1}
Y.~Andr\'{e},
{\it $G$-functions and geometry},
Aspects of Mathematics, E13. Friedr. Vieweg \& Sohn, Braunschweig, 1989.

\bibitem{An2}
Y.~Andr\'{e},
{\it S\'{e}ries Gevrey de type arithm\'{e}tique, I. Th\'{e}or\`{e}mes de puret\'{e} et de dualit\'{e}},
Ann. of Math. {\textbf{151}} \ (2000), 705-740.

\bibitem{An3}
Y.~Andr\'{e}, 
{\it Arithmetic Gevrey series and transcendence. A survey},
J. Th\'eor.  Nombres Bordeaux \textbf{15} (1) (2003), 1--10.


\bibitem{A-B-V}
A.~I.~Aptekarev, A.~Branquinho and W.~Van Assche,
{\it Multiple orthogonal polynomials for classical weights},
Trans. Amer. Math. Soc. \textbf{355}. 10 (2003), 3887-3914.


\bibitem{BKS}
P.~Bateman, J.~Kalb and A.~Stenger, 
{\it Problem $10797$: A limit involving least common multiples},
Amer. Math. Monthly \textbf{109} (2002), 393--394.

\bibitem{B}
F.~Beukers,
{\it Irrationality of some $p$-adic $L$-values},
Acta Math.\ Sin. \textbf{24},\ no.\ 4, (2008), 663--686.

\bibitem{ch brothers I}
D.~V.~Chudnovsky, G.~V.~Chudnovsky,
{\it Pad\'{e} and rational approximations to systems of functions and their arithmetic applications},
Lecture Notes in Mathematics, volume \textbf{1052}, 1984, 37--84.

\bibitem{ch brothers II}
D.~V.~Chudnovsky, G.~V.~Chudnovsky,
{\it Use of computer algebra for Diophantine and differential equations},
In: Computer Algebra (New York, 1984), 
Lecture Notes in Pure and Applied Math. \textbf{113}, Marcel Dekker, 1989, 1--81. 

\bibitem{DHK2}
S.~David, N.~Hirata-Kohno  and M.~Kawashima,
{\it Can polylogarithms at algebraic points be linearly independent?},
Mosc. J. Comb. Number Theory \textbf{9} (2020), 389--406.

\bibitem{DHK3}
S.~David, N.~Hirata-Kohno  and M.~Kawashima,
{\it Linear Forms in Polylogarithms},
Ann. Sc. Norm. Super. Pisa Cl. Sci. (5)  \textbf{23} (2022), 1447--1490.

\bibitem{DHK4}
S.~David, N.~Hirata-Kohno and M.~Kawashima,
{\it Linear independence criteria for generalized polylogarithms with distinct shifts},
Acta Arith.  \textbf{206} (2) (2022), 127--169.

\bibitem{DHK5}
S.~David, N.~Hirata-Kohno and M.~Kawashima,
{\it Linear independence of values of hypergeometric functions and arithmetic Gevrey series},
available at https://arxiv.org/pdf/2511.06534

\bibitem{F-R0}
S.~Fischler and T.~Rivoal,
{\it On the denominators of the Taylor coefficients of $G$-functions},
Kyushu J. Math. \textbf{71}.2 (2017), 287--298.

\bibitem{F-R}
S.~Fischler and T.~Rivoal,
{\it A note on $G$-operators of order $2$},
Colloq. Math. \textbf{170}.2 (2022), 321--340.


\bibitem{Hermite e}
C.~Hermite,
{\it Sur la fonction exponentielle},
C.r. Acad. Sci. Paris, \textbf{77}, 1873, 18--24, 74--79, 226--233, 285--293. 
Oeuvres m, 150-181.

\bibitem{Hermite}
C.~Hermite,
{\it Lettre de M.~C.~Hermite de Paris \`{a} M.~L.~Fuchs de G\"{o}ttingue ``Sur quelques \'{e}quations diff\'{e}rentielles lin\'{e}aires''}, 
Journal de Crelle, \textbf{79}, 1875, 324--338.

\bibitem{H}
M.~Huttner,
{\it On linear independence measure of some abelian integrals},
Kyusyu J. Math, \textbf{57}, 2003, 129--157.

\bibitem{H0}
M.~Huttner,
{\it On a paper of Hermite and Diophantine Approximation of Abelian Integrals $($Analytic Number Theory $:$ Expectations for the 21st Century$)$},
RIMS Kokyuroku, \textbf{1219}, 2001, 185--194.

\bibitem{I}
E.~L.~Ince,
{\it Ordinary differential equations},
Dover, 1956.

\bibitem{Kaw}
M.~Kawashima,
{\it Rodrigues formula and linear independence for values of hypergeometric functions with parameters vary},
J. of the Aust. Math. Society, \textbf{117}, 3, 308--344.

\bibitem{KP}
M.~Kawashima and A.~Po\"{e}ls,
{\it Pad\'{e} approximation for a class of hypergeometric functions and parametric geometry of numbers},
J. of Number Theory, \textbf{243}, (2023), 646--687.

\bibitem{KP2}
M.~Kawashima and A.~Po\"{e}ls,  
{\it On the linear independence of $p$-adic polygamma values},
Mathematika \textbf{71} (4) (2025), DOI: 10.1112/mtk.70040~.

{{\bibitem{N-S}
E.~M.~Niki\v{s}in and V.~N.~Sorokin,
{\it Rational Approximations and Orthogonality},
American Math. Soc., Translations of Mathematical Monographs, \textbf{92} (1991).}}

\bibitem{Pade1}  
H.~Pad\'e,
{\it Sur la repr\'esentation approch\'ee d'une fonction par des fractions rationnelles},
Ann. Sci. \'Ecole Norm. Sup. \textbf{9} (1892), 3--93.

\bibitem{Pade2}
H.~Pad\'e, 
{\it M\'emoire sur les d\'eveloppements en fractions continues de la fonction exponentielle, pouvant servir d'introduction \`a la th\'eorie des fractions continues alg\'ebriques}, 
Ann. Sci. \'Ecole Norm. Sup. \textbf{16} (1899), 395--426. 


\bibitem{Perron}
O.~Perron,
{\it $\ddot{U}$ber Summengleichungen and Poincar\'{e}sche Differenzengleichungen},
Math. Annalen., \textbf{84}, 1--15 (1921).


\bibitem{Pituk}
M.~Pituk,
{\it More on Poincar\'{e}'s and Perron's theorems for difference equations*},
J. Differ. Equ. Appl., \textbf{8}(3), 201--216 (2002).

\bibitem{R-T}
G.~Rhin and P.~Toffin,
{\it Approximants de Pad\'{e} simultan\'{e}s de logarithmes},
J. Number Theory, \textbf{24}, (1986), 284--297.

\bibitem{Siegel1}
C.~L.~Siegel,
{\it $\ddot{\text{U}}$ber einige Anwendungen diophantischer Approximationen},
Abh. Preu\ss. Akad. Wiss., Phys.-Math. Kl. (1),  1929,  70S,
English transl. in {\it On Some Applications of Diophantine Approximations}, (with a commentary by C. Fuchs and U. Zannier),
Quad. Monogr. \textbf{2}, Edizioni della Normale, Pisa, 
2014, 1--80.
\end{thebibliography}

\medskip\vglue5pt
\vskip 0pt plus 1fill
\hbox{\vbox{\hbox{Makoto \textsc{Kawashima}}
\hbox{{\tt kawasima@mi.meijigakuin.ac.jp}}
\hbox{Institute for Mathematical Informatics}
\hbox{Meiji Gakuin University}
\hbox{Totsuka, Yokohama, Kanagawa}
\hbox{224-8539, Japan}
}}
\end{document}